\documentclass[10pt]{amsart}
\usepackage{amsfonts,amssymb,amscd,amsmath,enumerate,verbatim,calc}
\textwidth=16.00cm
\textheight=23.00cm
\topmargin=0.00cm
\oddsidemargin=0.0cm
\evensidemargin=0.0cm
\headheight=14.4pt
\headsep=1cm
\numberwithin{equation}{section}
\hyphenation{semi-stable}
\emergencystretch=10pt

\newtheorem{thm}{Theorem}[section]
\newtheorem{cor}[thm]{Corollary}
\newtheorem{lem}[thm]{Lemma}
\newtheorem{prop}[thm]{Proposition}
\newtheorem{defn}[thm]{Definition}

\newtheorem{exam}[thm]{Example}
\newtheorem{rem}[thm]{Remark}

 
\DeclareMathOperator{\Ext}{Ext} \DeclareMathOperator{\Supp}{Supp}
\DeclareMathOperator{\V}{V} \DeclareMathOperator{\Hom}{Hom}
\DeclareMathOperator{\Ker}{Ker} \DeclareMathOperator{\Coker}{Coker}
\DeclareMathOperator{\Image}{Im} 
\DeclareMathOperator{\cd}{cd} \DeclareMathOperator{\q}{q}

\DeclareMathOperator{\Min}{Min} \DeclareMathOperator{\Max}{Max}
\DeclareMathOperator{\lc}{H} 
 
\DeclareMathOperator{\G}{\Gamma} 
 \DeclareMathOperator{\h}{H}

\DeclareMathOperator{\Ass}{Ass}

\newcommand{\fa}{\mathfrak{a}}
\newcommand{\fb}{\mathfrak{b}}

\newcommand{\fm}{\mathfrak{m}}
\newcommand{\fp}{\mathfrak{p}}

\newcommand{\lo}{\longrightarrow}

\bibliographystyle{amsplain}
\begin{document}

\title[Upper bounds, cofiniteness, and artinianness]
 {Upper bounds, cofiniteness, and artinianness of local
 cohomology modules defined by a pair of ideals}

\bibliographystyle{amsplain}

   \author[Moharram Aghapournahr]{M. Aghapournahr$^{1}$}
\address{$^{1}$ Department of Mathematics, Faculty of Science, Arak University,
Arak, 38156-8-8349, Iran.}
\email{m-aghapour@araku.ac.ir}

   \author[Kh. Ahmadi-amoli]{Kh. Ahmadi-amoli$^{2}$}
\address{$^{2}$ Department of Mathematics, Payame Noor University, Tehran,
 19395-3697, Iran.}
\email{khahmadi@pnu.ac.ir}

    \author[M. Y. Sadeghi]{M. Y. Sadeghi$^{3}$}
\address{$^{3}$ Department of Mathematics, Payame Noor University, Tehran,
 19395-3697, Iran.}
\email{m.sadeghi@phd.pnu.ac.ir}

\keywords{local cohomology modules defined by a pair of ideals,
local cohomology, Serre subcategory, associated primes, cofinite modules,
ZD-modules, minimax modules, Bass numbers.}

\subjclass[2010]{13D45, 13E05, 14B15.}


\begin{abstract}
Let $R$ be a commutative noetherian ring,
$I,J$ be two ideals of $R$, $M$ be an $R$-module,
and $\mathcal{S}$ be a Serre class of $R$-modules.
A positive answer to the Huneke$^,$s conjecture is
given for a noetherian ring $R$ and minimax
$R$-module $M$ of krull dimension less than 3,
with respect to $\mathcal{S}$.
There are some results on cofiniteness and artinianness of
local cohomology modules with respect to a pair of ideals.
For a ZD-module $M$ of finite krull dimension and an integer
$n\in\mathbb{N}$, if $\lc^{i}_{I,J}(M)\in\mathcal{S}$ for all
$i>n$, then
$\lc^{i}_{I,J}(M)/\fa^{j}\lc^{i}_{I,J}(M)\in\mathcal{S}$
for any $\fa\in\tilde{W}(I,J)$, all $i\geq n$,
and all $j\geq0$.
By introducing the concept of Seree cohomological
dimension of $M$ with respect to $(I,J)$,
for an integer $r\in\mathbb{N}_0$,
$\lc^{j}_{I,J}(R)\in\mathcal{S}$
for all $j>r$ iff $\lc^{j}_{I,J}(M)\in\mathcal{S}$
for all $j>r$ and any finite $R$-module $M$.
\end{abstract}

\maketitle


\section{Introduction}


As a generalization of the notion of local cohomology modules,
R. Takahashi, Y. Yoshino, and T. Yoshizawa \cite{TakYoYo},
introduced local cohomology modules with respect to a pair
of ideals. This paper is concerned about this new notion of
local cohomology and Serre subcategories. For notations and
terminologies not given in this paper, if necessary,
the reader is referred to \cite{TakYoYo} and \cite{AghMel}.
\\Throughout this paper, $R$ is denoted a commutative
noetherian ring with non-zero identity, $I$ , $J$ are denoted
two ideals of $R$, and $M$ is denoted an arbitrary $R$-module.
The $(I,J)$-torsion submodule $\Gamma_{I,J}(M)$ of $M$ is a
submodule of $M$ consists of all elements $x$ of $M$ with
Supp$(Rx)\subseteq W(I,J)$, in which
$$W(I,J)=\{~\fp\in\textmd{Spec}(R)\mid I^n\subseteq
\fp+J \textmd{\ \ for an integer} \ n\geq1\}.$$
For an integer $n$, the $n$-th local cohomology functor
$\lc^{n}_{I,J}$ with respect to $(I,J)$ is the $n$-th
right derived functor of $\Gamma_{I,J}$.
The $R$-module $\lc^{n}_{I,J}(M)$ is called the $n$-th
local cohomology module of $M$ with respect to $(I,J)$.
In the case $J=0$, $\lc^{n}_{I,J}$ coincides with
the ordinary functor $\lc^{n}_{I}$.
Also, we are concerned with the following set of ideals of $R$:
$$\tilde{W}(I,J)=\{~\fa\trianglelefteq R\mid
I^n\subseteq \fa+J \textmd{\ \ for an integer} \ n\geq0\}.$$
A class of $R$-modules is called a Serre subcategory
(or Serre class) of the category of $R$-modules when it is
closed under taking submodules, quotients and extensions.
Always, $\mathcal{S}$ stands for a Serre class.
\\According to the third Huneke$^,$s problem on local cohomology
\cite[Conjecture 4.3]{Hu}, one of the main problem in commutative
algebra is finiteness of the socle of local cohomology modules
on a local ring. Solving this problem, gives an answer to the
finiteness of the set of associated primes of local cohomology modules.
\\On this area, some remarkable attempts have been done, e.g. see
\cite{HuSh}, \cite{Ly}, \cite{Ly1}, \cite{MarVa}, and
\cite{AsgTo}.\\
In section 2, we give a positive answer to the Huneke$^,$s
conjecture more general for an arbitrary Serre subcategory
$\mathcal{S}$, instead of the category of finitely generated
modules. Let $R$ be a noetherian (not necessary local) ring.
Let $R/\fm\in\mathcal{S}$ for all $\fm\in\textmd{Max}(R)$.
For any minimax $R$-module $M$ of krull dimension less than 3,
we show that
$\Ext^{j}_R\big(R/\fm,\lc^{i}_{I}(M)\big)\in\mathcal{S}$
for any $\fm\in\textmd{Max}(R)\cap\V(I)$ and all $i,j\geq0$.
In particular $\Hom_R\big(R/\fm,\lc^{i}_{I}(M)\big)\in\mathcal{S}$
for any $\fm\in\textmd{Max}(R)\cap\V(I)$ and all $i\geq0$
(see Theorem \ref{2.14}).\\
We get the same result for local cohomology modules with
respect to a pair of ideals, but in local case
(see Theorem \ref{2.18}).\\
In section 3, we obtain some results on cofiniteness
and artinianness of local cohomology with respect to a
pair of ideals.\\
M. Aghapournahr and L. Melkersson in \cite{AghMel},
obtained some conditions in which the ordinary local
cohomology $\lc^{i}_{I}(M)$ belongs to $\mathcal{S}$
for all $i<n$ (from below).
In section 4, as a complement of this work, we study some
conditions in which the local cohomology $\lc^{i}_{I,J}(M)$
belongs to $\mathcal{S}$ for all $i>n$ (from top).
For a ZD-module $M$ of finite krull dimension, we show that if
the integer $n\in\mathbb{N}$ is such that
$\lc^{i}_{I,J}(M)\in\mathcal{S}$ for all $i>n$, then the modules
$\lc^{i}_{I,J}(M)/\fa^{j}\lc^{i}_{I,J}(M)\in\mathcal{S}$ for any $\fa\in\tilde{W}(I,J)$, all $i\geq n$, and all $j\geq0$
(see Theorem \ref{4.4}).
Replacing $\mathcal{S}$ with some familiar
Serre subcategories such as zero modules, finite modules,
and artinian modules (resp.), we show that a necessary
and sufficient condition for $\lc^{n}_{I,J}(M)$ to be zero,
inite, and artinian (resp.), is the existence of an integer
$m\in\mathbb{N}_0$ such that $I^m\lc^{n}_{I,J}(M)$ is zero,
finite, and artinian (resp.) (see Corollary \ref{4.6}).\\
More generally, for a finite $R$-module $M$ if
$n\in\mathbb{N}_0$ is such that $\lc^{i}_{I,J}(M)$ belongs to
$\mathcal{S}$ for all $i>n$ and $\fb$ is an ideal of $R$
such that $\lc^{n}_{I,J}(M/\fb M)$ belongs to $\mathcal{S}$,
then $\lc^{n}_{I,J}(M)/{\fb}\lc^{n}_{I,J}(M)$ belongs to
$\mathcal S$ (see Theorem \ref{4.7}).
As a consequence of this theorem, we obtain a similar
result as Corollary \ref{4.6}, for the finite module $M$
(see Corollary \ref{4.10}).
Other consequence of this theorem is concerned about
finiteness of $\lc^{n}_{I,J}(M)$ where $n=\textmd{cd}(I,J,M)$
or $n=\textmd{dim}M\geq1$ (see Corollary \ref{4.11}).\\
Finally, these results motivate us to introduce
the concept of Serre cohomological dimension of $M$ with
respect to $(I,J)$, $\textmd{cd}_{\mathcal{S}}(I,J,M)$, as the
supremum of non-negative integers $i$ such that
$\lc^{i}_{I,J}(M)\not\in\mathcal{S}$. We give some
characterizations to $\textmd{cd}_{\mathcal{S}}(I,J,M)$
(Corollary \ref{4.20}) and as a main result of its properties,
we show that for an integer $r\in\mathbb{N}_0$,
$\lc^{j}_{I,J}(R)\in\mathcal{S}$ for all $j>r$ iff
$\lc^{j}_{I,J}(M)\in\mathcal{S}$ for all $j>r$
and any finite $R$-module $M$
(see Corolary \ref{4.23})

\maketitle



\section{The membership of the
$\textmd{Hom}_R\big(-,\lc^{t}_{I,J}(M)\big)$ in Serre classes}


In this section, Proposition \ref{2.5} plays a main
role to obtain our results. For this purpose we need
the following Lemmas.

\begin{lem}\label{2.1}
For a Serre class $\mathcal{S}$, we have $\mathcal{S}\neq{0}$ if and only if
$R/\fm\in\mathcal{S}$ for some $\fm\in\Max(R)$.

\begin{proof}
$(\Rightarrow)$ Let $L\in\mathcal{S}$ and $0\neq x\in L$. Then $(0:_Rx)\subseteq\fm$
for some $\fm\in$ Max$(R)$. Now, since $Rx\in\mathcal{S}$, the assertion follows from
the natural epimorphism $Rx\cong R/(0:_Rx)\rightarrow R/\fm$.\\
$(\Leftarrow)$ It is obvious.
\end{proof}
\end{lem}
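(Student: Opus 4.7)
The backward direction is immediate and I would dispatch it first: if $R/\fm \in \mathcal{S}$ for some $\fm \in \Max(R)$, then $R/\fm$ is a nonzero member of $\mathcal{S}$ (it is a field, hence nonzero), and therefore $\mathcal{S} \neq \{0\}$.

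For the forward direction, my plan is to use the two most elementary closure properties of a Serre class, namely closure under submodules and under quotients. Assume $\mathcal{S} \neq \{0\}$ and pick some $L \in \mathcal{S}$ with $L \neq 0$. Choose any element $0 \neq x \in L$. The cyclic submodule $Rx \subseteq L$ lies in $\mathcal{S}$ by closure under submodules, and the standard isomorphism $Rx \cong R/(0 :_R x)$ identifies it with a cyclic quotient of $R$. Since $x \neq 0$, the annihilator $(0 :_R x)$ is a proper ideal of $R$, so by Krull's theorem it is contained in some maximal ideal $\fm$. The natural surjection $R/(0 :_R x) \twoheadrightarrow R/\fm$ then displays $R/\fm$ as a quotient of $Rx \in \mathcal{S}$, and closure under quotients yields $R/\fm \in \mathcal{S}$, as required.

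I do not anticipate any real obstacle in this argument; the lemma is essentially a direct unpacking of the Serre-class axioms. The only nontrivial ingredient outside the definition is the fact that every proper ideal is contained in a maximal ideal, which is automatic since $R$ is assumed noetherian. It is worth noting that the extension-closure axiom of a Serre class is never invoked here, so the same statement would hold for any class of modules merely closed under submodules and quotients.
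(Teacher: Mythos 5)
Your proof is correct and follows essentially the same route as the paper: take a nonzero $L \in \mathcal{S}$, pass to a cyclic submodule $Rx \cong R/(0:_R x)$, and project onto $R/\fm$ for a maximal ideal $\fm$ containing the proper annihilator. The only addition is your closing remark that extension-closure is not used, which is a fair observation but does not change the argument.
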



\begin{lem}\label{2.2}
Let $\mathcal{FL}$ be the class of finite length $R$-modules. Then
$\mathcal{FL}\subseteq\mathcal{S}$ if and only if $R/\fm\in\mathcal{S}$ for all
$\fm\in\Max(R)$.
\end{lem}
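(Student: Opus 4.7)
The plan is to argue both directions using the basic structure of finite length modules.

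For the forward direction $(\Rightarrow)$, I would simply observe that for any $\fm \in \Max(R)$, the module $R/\fm$ is simple and therefore has finite length (length one). Hence $R/\fm \in \mathcal{FL} \subseteq \mathcal{S}$, giving the conclusion immediately.

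For the backward direction $(\Leftarrow)$, I would proceed by induction on the length of a finite length $R$-module $L$. The base case $\ell(L)=0$ means $L = 0$, which lies in any Serre class. For the inductive step, pick a composition series $0 = L_0 \subsetneq L_1 \subsetneq \cdots \subsetneq L_n = L$ with simple quotients $L_i/L_{i-1} \cong R/\fm_i$ for various $\fm_i \in \Max(R)$. By hypothesis each quotient $R/\fm_i$ belongs to $\mathcal{S}$, and by induction $L_{n-1} \in \mathcal{S}$. Then the short exact sequence
\[
0 \lo L_{n-1} \lo L \lo R/\fm_n \lo 0
\]
together with the closure of $\mathcal{S}$ under extensions yields $L \in \mathcal{S}$.

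There is no real obstacle here; the only point to keep in mind is that a finite length module over a (commutative) noetherian ring admits a composition series whose simple subquotients are all of the form $R/\fm$ for maximal ideals $\fm$, which is standard. Closure of $\mathcal{S}$ under submodules, quotients, and extensions is precisely what is needed to propagate membership along the composition series.
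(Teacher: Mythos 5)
Your proof is correct and follows essentially the same route as the paper: the forward direction is immediate since each $R/\fm$ has length one, and the backward direction runs induction on the length of a composition series, using closure under extensions to propagate membership in $\mathcal{S}$. The paper leaves the forward direction as obvious and phrases the induction on the chain $0=N_0\subseteq\cdots\subseteq N_\ell=N$ with $N_j/N_{j-1}\cong R/\fm$, which is exactly your argument.
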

\begin{proof}
$(\Rightarrow)$ It is obvious.\\
$(\Leftarrow)$ Let $N\in\mathcal{FL}$ and $\ell:=\ell_R(N)$. So, consider the
following chain of $R$-submodules of $N$:
$$0=N_0\subseteq N_1\subseteq\cdots\subseteq N_\ell=N$$
in which, for all $1\leq j\leq \ell$, $N_j/N_{j-1}\cong R/\fm$  for some
$\fm\in$ Max$(R)$. Now, the assertion can be followed by induction on $\ell$.
\end{proof}


\begin{cor}\label{2.3}
Let $(R,\fm)$ be a local ring and $\mathcal{S}\neq0$. Then
$\mathcal{FL}\subseteq\mathcal{S}$.
\end{cor}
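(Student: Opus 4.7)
The plan is to chain together Lemmas \ref{2.1} and \ref{2.2}, exploiting the fact that a local ring has a unique maximal ideal. First I would invoke Lemma \ref{2.1}: since $\mathcal{S}\neq 0$, there must exist some $\fn\in\Max(R)$ with $R/\fn\in\mathcal{S}$. In the local setting, $\Max(R)=\{\fm\}$, so this forces $\fn=\fm$, giving $R/\fm\in\mathcal{S}$.

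Next I would feed this into Lemma \ref{2.2}: the hypothesis there requires $R/\fm'\in\mathcal{S}$ for every $\fm'\in\Max(R)$. Since the only maximal ideal is $\fm$ itself, the single membership $R/\fm\in\mathcal{S}$ already verifies that hypothesis in full, and Lemma \ref{2.2} yields $\mathcal{FL}\subseteq\mathcal{S}$.

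There is really no obstacle here; the entire content of the corollary lies in the observation that locality collapses the quantifier ``for all $\fm\in\Max(R)$'' in Lemma \ref{2.2} to the quantifier ``for some $\fm\in\Max(R)$'' in Lemma \ref{2.1}. Thus the proof amounts to two lines citing the previous two lemmas together with $|\Max(R)|=1$.
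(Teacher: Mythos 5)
Your argument is correct and is precisely the intended one: the paper states the corollary without a written proof, and the obvious route is exactly the chain you describe, using Lemma \ref{2.1} to get $R/\fm\in\mathcal{S}$ for the unique maximal ideal and then Lemma \ref{2.2} to conclude $\mathcal{FL}\subseteq\mathcal{S}$.
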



\begin{exam}\label{2.4}
\emph{In general case, it not true that $R/\fm\in\mathcal{S}$
for all $\fm\in\Max(R)$. To see this, let $R$ be a non-local
ring and $\fm\in$ Max$(R)$.
Let  $I$ be an ideal of $R$ such that $I\nsubseteq\fm$.
Let $\mathcal{S}$ be the
class of $I$-cofinite minimax $R$-modules,
(see \cite[Corollary 4.4]{Mel1}).
Then Supp$(R/\fm)\nsubseteq V(I)$ and so
$R/\fm\not\in\mathcal{S}$.
For example,
let $R:=\mathbb{Z}[x]$, $\fm:=(x-1)R$, and $I:=xR$.}
\end{exam}



\begin{prop}\label{2.5}
For a noetherian ring $R$, we have
\begin{enumerate}

  \item[\rm{(}i\rm{)}] If $R/\fm\in\mathcal{S}$, for any $\fm\in$ Max$(R)$, and
  $M$ is a finite or an artinian $R$-module, then $\Ext^j_R(R/\fm,M)\in\mathcal{S}$
  for any $\fm\in$ Max$(R)$ and all $j\geq0$.

  \item[\rm{(}ii\rm{)}] If $R/\fm\in\mathcal{S}$, for any $\fm\in$ Max$(R)$, and $M$
  is a minimax $R$-module, then $\Ext^j_R(R/\fm,M)\in\mathcal{S}$
  for any $\fm\in$ Max$(R)$ and all $j\geq0$.

  \item[\rm{(}iii\rm{)}] If $(R,\fm)$ be a local ring, $\mathcal{S}\neq0$, and $M$
  be a minimax $R$-module, then $\Ext^j_R(R/\fm,M)\in\mathcal{S}$ for all $j\geq0$.

\end{enumerate}

\begin{proof}
(i) Let $\fm\in$ Max$(R)$ and $j\geq0$. Since $\Ext^j_R(R/\fm,M)$ is finite and is
annihilated by $\fm$, hence $\Ext^j_R(R/\fm,M)$ has finite length. Now, the result
follows from Lemma \ref {2.2}.\\
(ii) Since $M$ is minimax, there exists a short exact sequence
$$0\rightarrow N\rightarrow M\rightarrow A\rightarrow0,$$
where $N$ is a finite module and $A$ is an artinian module.
This induces the exact sequence
$$\cdots\rightarrow\Ext^j_R(R/\fm,N)\rightarrow\Ext^j_R(R/\fm,M)\rightarrow
\Ext^j_R(R/\fm,A)\rightarrow\cdots .$$
Now, the assertion follows from part (i).\\
(iii) Apply Lemma \ref {2.1} and part (ii).
\end{proof}
\end{prop}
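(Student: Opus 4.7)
The plan is to tackle the three parts in order, reducing each to the previous one. The central observation, which drives everything, is that $\Ext^j_R(R/\fm,M)$ is annihilated by $\fm$, so it is naturally a module over the field $R/\fm$; hence any finiteness condition on it (noetherian or artinian) will force it to be a finite-dimensional $R/\fm$-vector space, i.e.\ of finite length. Combined with Lemma~\ref{2.2}, this converts part (i) into a pure finiteness check.

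For part (i), I would separate the two cases. When $M$ is finite, the noetherianness of $R$ together with the finiteness of $R/\fm$ and $M$ shows $\Ext^j_R(R/\fm,M)$ is finitely generated, hence a finite-dimensional $R/\fm$-vector space, hence of finite length. When $M$ is artinian, I would take a free resolution of $R/\fm$ by finitely generated free $R$-modules (possible since $R$ is noetherian) and apply $\Hom_R(-,M)$; each term becomes a finite direct sum of copies of $M$, hence artinian, and $\Ext^j_R(R/\fm,M)$ appears as a subquotient of such a term, hence is artinian. Being artinian and annihilated by $\fm$, it is again of finite length over the field $R/\fm$. In either case Lemma~\ref{2.2} concludes.

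For part (ii), I would use the structural definition of a minimax module: there is a short exact sequence $0\to N\to M\to A\to 0$ with $N$ finite and $A$ artinian. The induced long exact sequence in $\Ext$ gives an exact segment
\[
\Ext^j_R(R/\fm,N)\lo \Ext^j_R(R/\fm,M)\lo \Ext^j_R(R/\fm,A),
\]
whose outer terms lie in $\mathcal{S}$ by part (i). Since Serre classes are closed under submodules, quotients, and extensions, the middle term lies in $\mathcal{S}$. Part (iii) is then a one-line consequence: when $(R,\fm)$ is local and $\mathcal{S}\neq 0$, Lemma~\ref{2.1} supplies $R/\fm\in\mathcal{S}$ (the unique maximal ideal being forced), and part (ii) applies verbatim.

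The only mildly subtle step is the artinian subcase of part (i): one must verify that $\Ext^j_R(R/\fm,M)$ inherits artinianness from $M$. Once that is in hand, the ``annihilated by $\fm$'' trick collapses both subcases of (i) into a finite length statement, and the remaining two parts are formal manipulations with exact sequences and the closure properties of $\mathcal{S}$.
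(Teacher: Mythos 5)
Your proposal is correct and follows essentially the same route as the paper: in part (i) reduce to finite length via annihilation by $\fm$ and invoke Lemma~\ref{2.2}, in part (ii) use the short exact sequence defining minimax and the closure properties of $\mathcal{S}$, and in part (iii) combine Lemma~\ref{2.1} with part (ii). Your treatment of the artinian subcase of (i) is in fact slightly more careful than the paper's, which loosely asserts $\Ext^j_R(R/\fm,M)$ is ``finite'' even when $M$ is artinian; your observation that it is a subquotient of a finite direct sum of copies of $M$, hence artinian and annihilated by $\fm$, hence of finite length, is the right way to close that small gap.
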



\begin{cor}\label{2.6}
Let $R/\fm\in\mathcal{S}$ for all $\fm\in$ Max$(R)$. Let $t\in\mathbb{N}_0$
be such that $\lc^{t}_{I,J}(M)$ is a minimax $R$-module. Then
$\Ext^j_R\big(R/\fm,\lc^{t}_{I,J}(M)\big)\in\mathcal{S}$ for all $i\geq0$.
\end{cor}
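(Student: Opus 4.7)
The plan is to simply invoke Proposition \ref{2.5}(ii) with the module $\lc^{t}_{I,J}(M)$ in the role of $M$. The hypothesis of the corollary already supplies the two inputs required by that proposition: the class $\mathcal{S}$ contains $R/\fm$ for every maximal ideal $\fm$ of $R$, and the module $\lc^{t}_{I,J}(M)$ is assumed to be minimax. Thus the conclusion of Proposition \ref{2.5}(ii) applies verbatim, yielding $\Ext^{j}_{R}\bigl(R/\fm,\lc^{t}_{I,J}(M)\bigr)\in\mathcal{S}$ for every $\fm\in\Max(R)$ and every $j\geq 0$.

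There is no genuine obstacle here; the corollary is a formal specialization and the only thing to verify is that the hypotheses of Proposition \ref{2.5}(ii) are met by the single module $\lc^{t}_{I,J}(M)$, which they are by assumption. One may wish to remark, for completeness, that the indexing $i$ appearing in the statement is a typographical artifact and should read $j\geq 0$, matching the bound variable of $\Ext^{j}_{R}$; the proof itself is a single line of the form: \emph{Apply Proposition \ref{2.5}(ii) to the minimax $R$-module $\lc^{t}_{I,J}(M)$.}
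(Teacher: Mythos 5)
Your proof is correct and is precisely the intended argument: Corollary \ref{2.6} is an immediate specialization of Proposition \ref{2.5}(ii) to the minimax module $\lc^{t}_{I,J}(M)$, and the paper accordingly states it without proof. Your observation that the bound ``$i\geq 0$'' in the statement is a typo for ``$j\geq 0$'' is also correct.
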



\begin{lem}\label{2.7}
Let $\fa\in\tilde{W}(I,J)$. Let $X$ be an $R$-module. Then
  $$\big( 0 :_X\fa\big) = \big(0:_{\Gamma_{\fa}(X)}\fa\big) =
  \big(0:_{\Gamma_{\fa,J}(X)}\fa\big) = \big(0:_{\Gamma_{I,J}(X)}\fa\big),$$
      in particular, for any ideal $\fb$ of $R$ with $\fb\supseteq\ I$, we have
 \begin{enumerate}

  \item[\rm{(}i\rm{)}] $\big( 0 :_X\fb\big) = \big(0:_{\Gamma_{\fb}(X)}\fb\big) =
  \big(0:_{\Gamma_{\fb,J}(X)}\fb\big) = \big(0:_{\Gamma_{I,J}(X)}\fb\big) =
  \big(0:_{\Gamma_{I}(X)}\fb\big)$.

  \item[\rm{(}ii\rm{)}] $\big(0:_X\fb\big) = \big(0:_{\Gamma_{\fb,J}(X)}\fb\big)
  \subseteq \big(0:_{\Gamma_{\fb,J}(X)}I\big) \subseteq
  \big(0:_{\Gamma_{I,J}(X)}I\big) = \big( 0 :_{X}I\big)$.

\end{enumerate}

\begin{proof}
All proofs are easy and we leave them to the reader.
\end{proof}
\end{lem}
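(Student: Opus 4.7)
The plan is to exploit the fact that each $\Gamma$-submodule appearing on the right is contained in $X$, so the containments
$(0:_{\Gamma_\fa(X)}\fa),\ (0:_{\Gamma_{\fa,J}(X)}\fa),\ (0:_{\Gamma_{I,J}(X)}\fa)\subseteq (0:_X\fa)$
are automatic. Hence the whole chain of equalities will follow once I verify the reverse direction, namely that any $x\in X$ with $\fa x=0$ already lies in $\Gamma_\fa(X)$, $\Gamma_{\fa,J}(X)$, and $\Gamma_{I,J}(X)$.

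For such an $x$, set $\fq:=\Ann_R(x)\supseteq\fa$, so $Rx\cong R/\fq$ and $\Supp(Rx)=\V(\fq)\subseteq \V(\fa)$. Membership in $\Gamma_\fa(X)$ is immediate since $\fa\cdot x=0$. For $\Gamma_{\fa,J}(X)$, each $\fp\in\Supp(Rx)$ satisfies $\fa\subseteq\fp\subseteq\fp+J$, so $\fp\in W(\fa,J)$. For $\Gamma_{I,J}(X)$ I will use the hypothesis $\fa\in\tilde W(I,J)$: pick $n\geq0$ with $I^n\subseteq\fa+J$; then for any $\fp\in\Supp(Rx)\subseteq\V(\fa)$ one has $I^n\subseteq\fa+J\subseteq\fp+J$, giving $\fp\in W(I,J)$. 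This supplies the three missing containments and establishes the main chain.

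For the two consequences in the ``in particular'' parts, note that when $\fb\supseteq I$ one has $\fb\in\tilde W(I,J)$ (as $I^1\subseteq\fb\subseteq\fb+J$) and also $I\in\tilde W(I,J)$, so the main chain applies to both $\fa=\fb$ and $\fa=I$. Part (i)'s extra equality $(0:_X\fb)=(0:_{\Gamma_I(X)}\fb)$ follows by the same argument: $\fb x=0$ forces $Ix=0$, so $x\in\Gamma_I(X)$, and the reverse inclusion is trivial. Part (ii) is then pure bookkeeping: the first equality is (i); $(0:_{\Gamma_{\fb,J}(X)}\fb)\subseteq(0:_{\Gamma_{\fb,J}(X)}I)$ because $I\subseteq\fb$; the containment $(0:_{\Gamma_{\fb,J}(X)}I)\subseteq(0:_{\Gamma_{I,J}(X)}I)$ uses $\Gamma_{\fb,J}(X)\subseteq\Gamma_{I,J}(X)$, which follows from $W(\fb,J)\subseteq W(I,J)$ (since $\fb^n\subseteq\fp+J$ implies $I^n\subseteq\fb^n\subseteq\fp+J$); and the final equality $(0:_{\Gamma_{I,J}(X)}I)=(0:_X I)$ is the main chain applied to $\fa=I$.

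There is essentially no obstacle here: the only nontrivial input is the definition of $\tilde W(I,J)$, used exactly once to pass from $\V(\fa)$ to $W(I,J)$. Everything else is set-theoretic manipulation of annihilators and supports, which explains why the authors simply leave the verification to the reader.
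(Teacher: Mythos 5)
Your proposal is correct and fills in exactly the verification the authors left to the reader: the trivial inclusions from $\Gamma_\bullet(X)\subseteq X$, together with the observation that $\fa x=0$ forces $\Supp(Rx)\subseteq \V(\fa)$ and hence (via $I^n\subseteq\fa+J$) $\Supp(Rx)\subseteq W(I,J)$. Since the paper supplies no argument of its own, there is nothing to contrast; your route is the natural one and the minor technicality you glossed over (the case $n=0$ in $I^n\subseteq\fa+J$, which forces $\fp+J=R$ and hence $\fp\in W(I,J)$ anyway) poses no real obstacle.
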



\begin{prop}\label{2.8}
Let $\fa\in\tilde{W}(I,J)$ and $t\in\mathbb{N}_0$. Consider the natural
homomorphism
$$\psi : \Ext^t_R\big(R/\fa,M\big)\longrightarrow \Hom_R\big(R/\fa,
\lc^{t}_{I,J}(M)\big).$$

\begin{enumerate}

  \item[\rm{(}i\rm{)}] If $\Ext^{t-j}_R\big(R/\fa,\lc^{j}_{I,J}(M)\big)
  \in\mathcal{S}$ for all $j<t$, then \emph{Ker} $\psi\in\mathcal{S}$.

  \item[\rm{(}ii\rm{)}] If $\Ext^{t+1-j}_R\big(R/\fa,\lc^{j}_{I,J}(M)\big)
  \in\mathcal{S}$ for all $j<t$, then \emph{Coker} $\psi\in\mathcal{S}$.

  \item[\rm{(}iii\rm{)}] If $\Ext^{n-j}_R\big(R/\fa,\lc^{j}_{I,J}(M)\big)
  \in\mathcal{S}$ for $n=t,\ t+1$ and for all $j<t$, then \emph{Ker} $\psi$
  and \emph{Coker} $\psi$ both belong to $\mathcal{S}$.
  Thus $\Ext^t_R\big(R/\fa,M\big)\in\mathcal{S}$ iff
  $\Hom_R\big(R/\fa,\lc^{t}_{I,J}(M)\big)\in\mathcal{S}$.

\end{enumerate}
\begin{proof}
Let $\textmd{F} (-) = \Hom_R\big(R/\fa,-\big)$ and
$\textmd{G} (-) = {\Gamma_{I,J}\big(-\big)}$.
By Lemma \ref {2.7}, $\textmd{FG} =\textmd{F}$.
Now, the result can be followed by \cite[Proposition 3.1]{AghMel1}.
\end{proof}
\end{prop}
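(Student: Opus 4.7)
The plan is to view $\psi$ as an edge homomorphism in the Grothendieck composite-functor spectral sequence attached to $\mathrm{F}(-):=\Hom_R(R/\fa,-)$ and $\mathrm{G}(-):=\Gamma_{I,J}(-)$, and then read off (i)--(iii) from the $E_\infty$-filtration on $\Ext^t_R(R/\fa,M)$, using only that $\mathcal{S}$ is closed under subquotients and extensions.

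First I would verify the two ingredients needed to set up the spectral sequence. The identity $\mathrm{F}\mathrm{G}=\mathrm{F}$ is exactly what Lemma~\ref{2.7} delivers: for every $R$-module $X$,
\[
\Hom_R(R/\fa,\Gamma_{I,J}(X))\cong \bigl(0:_{\Gamma_{I,J}(X)}\fa\bigr)=\bigl(0:_X\fa\bigr)\cong \Hom_R(R/\fa,X),
\]
naturally in $X$. That $\mathrm{G}$ preserves injectives (hence carries them to $\mathrm{F}$-acyclics) is standard over a noetherian ring: each injective $R$-module is a direct sum of indecomposables $E_R(R/\fp)$, and $\Gamma_{I,J}$ just retains the summands with $\fp\in W(I,J)$. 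This yields the spectral sequence
\[
E_2^{p,q}=\Ext^p_R\bigl(R/\fa,\lc^q_{I,J}(M)\bigr)\Longrightarrow \Ext^{p+q}_R(R/\fa,M).
\]

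Next I would identify $\psi$ with the edge homomorphism $\Ext^t_R(R/\fa,M)\twoheadrightarrow E_\infty^{0,t}\hookrightarrow E_2^{0,t}=\Hom_R(R/\fa,\lc^t_{I,J}(M))$. Then $\Ker\psi$ is the first step of the decreasing filtration on $\Ext^t_R(R/\fa,M)$ and admits a finite filtration whose successive quotients are subquotients of $E_2^{t-j,j}=\Ext^{t-j}_R(R/\fa,\lc^j_{I,J}(M))$ for $0\le j<t$; under the hypothesis of (i) each such $E_2$-term lies in $\mathcal{S}$, so $\Ker\psi\in\mathcal{S}$. For (ii), $\Coker\psi=E_2^{0,t}/E_\infty^{0,t}$ is cut out by the outgoing differentials $d_r\colon E_r^{0,t}\to E_r^{r,t-r+1}$, so it is a subquotient of the finite sum $\bigoplus_{j<t}E_2^{t+1-j,j}=\bigoplus_{j<t}\Ext^{t+1-j}_R(R/\fa,\lc^j_{I,J}(M))$. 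Part (iii) then follows from (i)--(ii) and the four-term exact sequence $0\to\Ker\psi\to\Ext^t_R(R/\fa,M)\to\Hom_R(R/\fa,\lc^t_{I,J}(M))\to\Coker\psi\to 0$.

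The only substantive step is verifying $\mathrm{F}\mathrm{G}=\mathrm{F}$, which is where the hypothesis $\fa\in\tilde{W}(I,J)$ enters through Lemma~\ref{2.7}; everything else is formal $E_\infty$-bookkeeping. In fact the whole statement can be cited as a direct instance of \cite[Proposition~3.1]{AghMel1}, as the authors do.
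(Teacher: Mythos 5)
Your proposal is correct and is essentially the paper's own approach, merely unpacked. The paper reduces to the formal fact $\mathrm{FG}=\mathrm{F}$ via Lemma~\ref{2.7} and then cites \cite[Proposition 3.1]{AghMel1}, which is precisely the abstract statement you reprove by hand: set up the Grothendieck composite-functor spectral sequence $E_2^{p,q}=\Ext^p_R(R/\fa,\lc^q_{I,J}(M))\Rightarrow \Ext^{p+q}_R(R/\fa,M)$ (using that $\Gamma_{I,J}$ sends injectives to injectives, and that $\mathrm{FG}=\mathrm{F}$ collapses the abutment), identify $\psi$ with the edge map to $E_2^{0,t}$, and read $\Ker\psi$ and $\Coker\psi$ off the $E_\infty$-filtration and the outgoing differentials respectively. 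Your filtration bookkeeping in parts (i)--(iii) is accurate, and the only nonformal input --- $\mathrm{FG}=\mathrm{F}$, hence $\fa\in\tilde{W}(I,J)$ via Lemma~\ref{2.7} --- is the same one the paper uses; so this is the same route with the cited proposition's proof spelled out rather than invoked.
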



The next result can be a generalization of main results of
\cite{AsgTo}, \cite{KhaSal}, \cite{BroLash},
\cite{DivMaf}, \cite{LoSahYas}, \cite{BahNag},
and \cite{AsaKhaSal}.

\begin{thm} \label {2.9}
Let $\fa\in\tilde{W}(I,J)$ and $~t\in\mathbb{N}_0$ be such that
$\Ext^{t}_R\big(R/\fa,M\big)\in\mathcal{S}$
and $\Ext^{n-j}_R\big(R/\fa,\lc^{j}_{I,J}(M)\big)$ $\in\mathcal{S}$ for $n=t,\ t+1$
and for all $j<t$. Then for any submodule $N$ of $\lc^{t}_{I,J}(M)$ such that
$\Ext^1_R\big(R/\fa,N\big)\in\mathcal{S}$, we have

\begin{enumerate}

  \item[\rm{(}i\rm{)}] $\Hom_R\big(R/\fa,\lc^{t}_{I,J}(M)/N\big)\in\mathcal{S}$.

  \item[\rm{(}ii\rm{)}] $\Hom_R\big(L,\lc^{t}_{I,J}(M)/N\big)\in\mathcal{S}$
  for any finite $R$-module $L$ with $\Supp(L)\subseteq\textmd{V}(\fa)$.

  \item[\rm{(}iii\rm{)}] $\Hom_R\big(R/\fp,\lc^{t}_{I,J}(M)/N\big)\in\mathcal{S}$
  for any $\fp\in\textmd{V}(\fa)$.
\end{enumerate}
All the statements are hold for $\fa=I$.
\begin{proof}
(i) Apply Proposition \ref{2.8} and the exact sequence
$$0\rightarrow N\rightarrow\lc^{t}_{I,J}(M)\rightarrow\lc^{t}_{I,J}(M)/N\rightarrow0.$$
For (ii) and (iii) apply \cite[Theorem 2.10]{AhSad}.
\end{proof}
\end{thm}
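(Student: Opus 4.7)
The plan is to reduce everything to Proposition~\ref{2.8}. The hypotheses $\Ext^{t}_R(R/\fa,M)\in\mathcal{S}$ together with $\Ext^{n-j}_R(R/\fa,\lc^{j}_{I,J}(M))\in\mathcal{S}$ for $n=t,t+1$ and $j<t$ are exactly what is required to invoke Proposition~\ref{2.8}(iii), which will yield $\Hom_R(R/\fa,\lc^{t}_{I,J}(M))\in\mathcal{S}$. This is the crucial input for part~(i).

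For (i), I apply the functor $\Hom_R(R/\fa,-)$ to the short exact sequence
$$0\lo N\lo \lc^{t}_{I,J}(M)\lo \lc^{t}_{I,J}(M)/N\lo 0,$$
obtaining the exact sequence
$$\Hom_R(R/\fa,\lc^{t}_{I,J}(M))\lo \Hom_R(R/\fa,\lc^{t}_{I,J}(M)/N)\lo \Ext^{1}_R(R/\fa,N).$$
The left-hand term is in $\mathcal{S}$ by the previous step, and the right-hand one is in $\mathcal{S}$ by the hypothesis on $N$. Since $\mathcal{S}$ is closed under extensions and subquotients, the middle term is forced into $\mathcal{S}$, proving (i).

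For (ii), the standard transfer from $R/\fa$ to an arbitrary finite $R$-module $L$ with $\Supp(L)\subseteq\V(\fa)$ is handled by invoking \cite[Theorem~2.10]{AhSad}, which propagates $\Hom_R(R/\fa,X)\in\mathcal{S}$ to $\Hom_R(L,X)\in\mathcal{S}$ for every such $L$; I apply this to $X=\lc^{t}_{I,J}(M)/N$. Part~(iii) is then an immediate specialization, taking $L=R/\fp$ for $\fp\in\V(\fa)$, since $\Supp(R/\fp)=\V(\fp)\subseteq\V(\fa)$. The final assertion (``all the statements hold for $\fa=I$'') is simply the observation that $I\in\tilde{W}(I,J)$, as $I\subseteq I+J$.

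The main obstacle, should one wish to be self-contained, lies in the propagation step from \cite[Theorem~2.10]{AhSad}: once we know $\Hom_R(R/\fa,-)$ of the quotient lies in $\mathcal{S}$, we must pass to general finite modules supported in $\V(\fa)$. The natural self-contained route is to use $\fa^{k}L=0$ for some $k$, filter $L$ by powers of $\fa$ so that every successive quotient is finitely generated over $R/\fa$, refine further by a prime filtration with factors $R/\fp$ for $\fp\supseteq\fa$, and propagate membership in $\mathcal{S}$ through the induced long exact sequences. Beyond this, the bulk of the theorem is essentially a bookkeeping consequence of Proposition~\ref{2.8} and the Serre-subcategory closure properties.
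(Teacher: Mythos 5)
Your proof matches the paper's own argument step by step: you invoke Proposition~\ref{2.8}(iii) to place $\Hom_R(R/\fa,\lc^{t}_{I,J}(M))$ in $\mathcal{S}$, sandwich $\Hom_R(R/\fa,\lc^{t}_{I,J}(M)/N)$ between that term and $\Ext^{1}_R(R/\fa,N)$ via the long exact sequence, and use closure of $\mathcal{S}$ under subquotients and extensions; for (ii) and (iii) you cite the same transfer theorem from \cite{AhSad}. This is the paper's proof, merely written out in more detail.
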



\begin{cor}\label{2.10}
Let $(R,\fm)$ be a local ring and $\fa\in\tilde{W}(I,J)$.
Let $t\in\mathbb{N}_0$ be such that
$\Ext^{t}_R(R/\fa,M)\in\mathcal{S}$ and
$\Ext^{n-j}_R\big(R/\fa,\lc^{j}_{I,J}(M)\big)\in\mathcal{S}$
for $n=t,\ t+1$ and all $j<t$.
Then for any submodule $N$ of $\lc^{t}_{I,J}(M)$ such that
$\Ext^1_R\big(R/\fa,N\big)\in\mathcal{S}$, we have
$\Hom_R\big(R/\fa,\lc^{t}_{I,J}(M)/N\big)\in\mathcal{S}$,
specially $\Hom_R\big(R/\fm,\lc^{t}_{I,J}(M)/N\big)\in\mathcal{S}$.
\end{cor}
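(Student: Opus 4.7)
The plan is to recognize that Corollary \ref{2.10} is essentially a direct specialization of Theorem \ref{2.9} to the case where $R$ is local, so the proof will amount to checking that the hypotheses of Theorem \ref{2.9} are exactly the hypotheses we are given, and then invoking the appropriate parts.

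First I would handle the primary assertion $\Hom_R\bigl(R/\fa,\lc^{t}_{I,J}(M)/N\bigr)\in\mathcal{S}$. The hypotheses on $\fa$, $t$, $M$, and $N$ in the corollary are word-for-word those of Theorem \ref{2.9}, and the locality of $R$ is not used here. Thus this conclusion is immediate from Theorem \ref{2.9}(i). No additional work is required: I would just cite the theorem applied with the given data.

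Next, for the \emph{specially} part, $\Hom_R\bigl(R/\fm,\lc^{t}_{I,J}(M)/N\bigr)\in\mathcal{S}$, I would use Theorem \ref{2.9}(iii). That part gives membership in $\mathcal{S}$ of $\Hom_R(R/\fp,\lc^{t}_{I,J}(M)/N)$ for any $\fp\in\V(\fa)$. The key observation is that in the local ring $(R,\fm)$ every proper ideal is contained in $\fm$; so $\fa\subseteq\fm$, which means $\fm\in\V(\fa)$. (The case $\fa=R$ is trivial because then $R/\fa=0$ and the whole statement collapses.) Taking $\fp=\fm$ in Theorem \ref{2.9}(iii) then gives exactly the desired conclusion.

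There is really no technical obstacle in this corollary: the only ``idea'' is that locality automatically places $\fm$ in $\V(\fa)$, allowing the prime-ideal version of Theorem \ref{2.9} to specialize to the maximal ideal. The proof itself can therefore be stated in two lines, one invocation of Theorem \ref{2.9}(i) for the general $\fa$ statement, and one invocation of Theorem \ref{2.9}(iii) with $\fp=\fm$ for the specialization.
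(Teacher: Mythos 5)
Your proposal is correct and takes the same approach the paper intends: the corollary is a direct specialization of Theorem~\ref{2.9}, with part~(i) giving $\Hom_R(R/\fa,\lc^{t}_{I,J}(M)/N)\in\mathcal{S}$ and part~(iii) applied with $\fp=\fm$ (valid since $\fa\subseteq\fm$ in a local ring) giving the $R/\fm$ statement.
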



\begin{exam} \label{2.11}
\emph{In Theorem \ref{2.9},
the assumption $\Ext^{t}_R\big(R/\fa,M\big)\in\mathcal{S}$ is necessary.
To see this, let $(R,\fm)$ be a local Gorenstein ring of positive dimension $d$,
and $\mathcal{S}=0$. Then $\lc^{i}_{\fm}(R)=0$ for $i<d$. But we have
$\Ext^{d}_R\big(R/\fm,R\big)\cong\Hom_R\big(R/\fm,\lc^{d}_{\fm}(R)\big)\cong
\Hom_R\big(R/\fm,\emph{E}(R/\fm)\big)\cong R/\fm\neq0$}.
\end{exam}


One of the main result of this paper is the following theorem which is a
generalization of \cite[Theorem 2.12]{AsgTo}

\begin{thm} \label {2.12}
Let $R/\fm\in\mathcal{S}$ for all $\fm\in$ $\emph{Max}(R)$. Let $M$ be a minimax
$R$-module and $t\in\mathbb{N}_0$ be such that
$\Ext^{n-j}_R\big(R/\fm,\lc^{j}_{I,J}(M)\big)$ $\in\mathcal{S}$ for
$n=t,\ t+1$, all $j<t$, and all $\fm\in$ $\emph{Max}(R)$.
Then for any submodule $N$ of $\lc^{t}_{I,J}(M)$ such that
$\Ext^1_R\big(R/\fm,N\big)\in\mathcal{S}$, we have
$\Hom_R\big(R/\fm,\lc^{t}_{I,J}(M)/N\big)\in\mathcal{S}$,
for any $\fm\in\emph{W}_\emph{Max}(I,J) := \emph{Max}(R)\cap\emph{W}(I,J)$.
\end{thm}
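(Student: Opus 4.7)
The plan is to reduce Theorem~\ref{2.12} to Theorem~\ref{2.9} applied with the choice $\fa = \fm$. Once this reduction is justified, part~(i) of Theorem~\ref{2.9} delivers exactly the desired conclusion $\Hom_R\bigl(R/\fm,\lc^{t}_{I,J}(M)/N\bigr)\in\mathcal{S}$. So the real content of the proof lies in verifying, for a fixed $\fm \in W_{\mathrm{Max}}(I,J)$, the three hypotheses required of Theorem~\ref{2.9}: (a) $\fm\in\tilde{W}(I,J)$, (b) $\Ext^{t}_R(R/\fm,M)\in\mathcal{S}$, and (c) the Ext-vanishing of $\Ext^{n-j}_R\bigl(R/\fm,\lc^{j}_{I,J}(M)\bigr)$ for $n=t,t+1$ and $j<t$, together with $\Ext^{1}_R(R/\fm,N)\in\mathcal{S}$ on the submodule side.

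Hypothesis (a) is immediate from the defining condition of $W_{\mathrm{Max}}(I,J)$: since $\fm\in W(I,J)$, there is some $n\geq 1$ with $I^n\subseteq\fm + J$, and this is precisely the condition for $\fm$ to lie in $\tilde{W}(I,J)$. Hypothesis~(c), as well as the auxiliary assumption $\Ext^{1}_R(R/\fm,N)\in\mathcal{S}$, is assumed outright in the statement of Theorem~\ref{2.12}. The only hypothesis that needs work is (b), the membership of $\Ext^{t}_R(R/\fm,M)$ in $\mathcal{S}$, which is not part of the assumptions. Here the minimaxness of $M$ combined with the standing hypothesis $R/\fm'\in\mathcal{S}$ for every $\fm'\in\mathrm{Max}(R)$ feeds directly into Proposition~\ref{2.5}(ii), yielding $\Ext^{j}_R(R/\fm,M)\in\mathcal{S}$ for every $j\geq 0$ and every $\fm\in\mathrm{Max}(R)$; specializing to $j=t$ secures (b).

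With all three hypotheses in place, Theorem~\ref{2.9}(i) applied with $\fa = \fm$ gives $\Hom_R\bigl(R/\fm,\lc^{t}_{I,J}(M)/N\bigr)\in\mathcal{S}$, finishing the proof. I do not expect any serious obstacle: the entire argument is a bookkeeping reduction, and the only non-obvious step is recognizing that Proposition~\ref{2.5}(ii) supplies the missing input $\Ext^{t}_R(R/\fm,M)\in\mathcal{S}$ automatically from the minimaxness of $M$ and the assumption on residue fields. The restriction $\fm\in W_{\mathrm{Max}}(I,J)$ rather than all of $\mathrm{Max}(R)$ is exactly what is needed to put $\fm$ inside $\tilde{W}(I,J)$, so the scope of the conclusion matches the scope allowed by the underlying Theorem~\ref{2.9}.
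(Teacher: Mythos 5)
Your proof is correct and follows essentially the same route as the paper, which simply cites Proposition~\ref{2.5}(ii) and Theorem~\ref{2.9}. You have just spelled out the details: Proposition~\ref{2.5}(ii) supplies the missing hypothesis $\Ext^{t}_R(R/\fm,M)\in\mathcal{S}$, the condition $\fm\in W_{\mathrm{Max}}(I,J)$ places $\fm$ in $\tilde{W}(I,J)$, and then Theorem~\ref{2.9}(i) with $\fa=\fm$ gives the conclusion.
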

\begin{proof}
The assertion follows from Proposition \ref{2.5} (ii) and Theorem \ref{2.9}.
\end{proof}


\begin{cor}\label{2.13}
Let $(R,\fm)$ be a local ring, $\mathcal{S}\neq0$ and $M$ be a minimax $R$-module.
Let $~t\in\mathbb{N}_0$ be such that $\Ext^{n-j}_R\big(R/\fm,\lc^{j}_{I,J}(M)\big)$
$\in\mathcal{S}$ for $n=t,\ t+1$ and for all $j<t$.
Then for any minimax submodule $N$ of $\lc^{t}_{I,J}(M)$,
we have $\Hom_R\big(R/\fm,\lc^{t}_{I,J}(M)/N\big)\in\mathcal{S}.$
\end{cor}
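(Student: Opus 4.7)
The plan is to reduce the statement directly to Corollary \ref{2.10} with the choice $\fa=\fm$. That corollary, in the local setting, requires three kinds of input at the maximal ideal: (a) $\Ext^{t}_R(R/\fm,M)\in\mathcal{S}$; (b) the vanishing-mod-$\mathcal{S}$ of $\Ext^{n-j}_R(R/\fm,\lc^{j}_{I,J}(M))$ for $n=t,t+1$ and $j<t$; and (c) $\Ext^1_R(R/\fm,N)\in\mathcal{S}$ for the submodule $N$. Input (b) is exactly what is already hypothesized in the statement of Corollary \ref{2.13}, so the real task is supplying (a) and (c).

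For (a), I would invoke Proposition \ref{2.5}(iii): since $(R,\fm)$ is local, $\mathcal{S}\neq 0$, and $M$ is minimax by hypothesis, that proposition immediately gives $\Ext^{j}_R(R/\fm,M)\in\mathcal{S}$ for \emph{every} $j\geq 0$, in particular for $j=t$. For (c), I would apply the very same Proposition \ref{2.5}(iii) once more, this time to the minimax submodule $N$ of $\lc^{t}_{I,J}(M)$, which yields $\Ext^{j}_R(R/\fm,N)\in\mathcal{S}$ for all $j\geq 0$, in particular for $j=1$. Note that Proposition \ref{2.5}(iii) is available precisely because the hypotheses $\mathcal{S}\neq 0$ and the locality of $R$ are assumed in the statement we are proving.

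With (a), (b), and (c) in hand, Corollary \ref{2.10} applies verbatim with $\fa=\fm$, producing $\Hom_R(R/\fm,\lc^{t}_{I,J}(M)/N)\in\mathcal{S}$, which is exactly the desired conclusion. There is no real obstacle here: the result is a direct packaging of Corollary \ref{2.10} together with Proposition \ref{2.5}(iii), the latter being the device that converts the minimax hypotheses on $M$ and $N$ into the Ext-membership conditions demanded by Corollary \ref{2.10}. The only point to keep an eye on is making sure the ambient assumptions ($R$ local, $\mathcal{S}\neq 0$) are correctly propagated into each application of Proposition \ref{2.5}(iii); everything else is formal.
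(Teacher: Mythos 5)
Your proof is correct and in substance the same as the paper's. The paper proves Corollary~\ref{2.13} by citing Proposition~\ref{2.5} (to convert the minimax hypotheses on $M$ and $N$ into Ext-membership in $\mathcal{S}$) together with Theorem~\ref{2.12}; you instead cite Proposition~\ref{2.5}(iii) together with Corollary~\ref{2.10} with $\fa=\fm$. Since Corollary~\ref{2.10} and Theorem~\ref{2.12} are both just repackagings of Theorem~\ref{2.9} with the needed Ext condition on $M$ supplied by Proposition~\ref{2.5}, the two routes are essentially identical; the only difference is that you go through the ``local'' corollary rather than the ``global'' theorem. One small point you leave implicit: applying Corollary~\ref{2.10} with $\fa=\fm$ requires $\fm\in\tilde{W}(I,J)$, which holds whenever $I$ is a proper ideal (since then $I\subseteq\fm\subseteq\fm+J$); when $I=R$ and $J\subsetneq R$ the whole statement is vacuous because $\Gamma_{I,J}$ vanishes on a local ring. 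The paper's proof has the same implicit assumption, so this is not a defect relative to the paper.
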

\begin{proof}
Apply Proposition \ref{2.5} and Theorem \ref{2.12}.
\end{proof}


The following familiar conjecture is due to Huneke \cite{Hu}.\\
\textbf{Conjecture}. Let $(R,\fm,k)$ be a regular local ring and $I$ be an ideal of $R$.
For all $n$, soc$\big(\lc^{n}_{I}(R)\big)$ is finitely generated.\\
As we mentioned in the introduction, the following theorem which is one of
the main result of this section, can be a positive answer to Huneke$^,$s
conjecture. In fact the following theorem proves a generalization of the conjecture
for an arbitrary noetherian ring $R$ (not necessary regular local one), a minimax
$R$-module $M$ of krull dimension less than 3, and a Serre class $\mathcal{S}$.

\begin{thm} \label {2.14}
Let $R$ be a noetherian ring and $M$ be a minimax $R$-module
of krull dimension less than \emph{3}.
Let $R/\fm\in\mathcal{S}$ for any $\fm\in\emph{Max}(R)$.
Then $\Ext^{j}_R\big(R/\fm,\lc^{i}_{I}(M)\big)\in\mathcal{S}$ for any $\fm\in\emph{Max}(R)\cap\V(I)$ and all $i,j\geq0$.
In particular $\Hom_R\big(R/\fm,\lc^{i}_{I}(M)\big)\in\mathcal{S}$
for any $\fm\in\emph{Max}(R)\cap\V(I)$ and all $i\geq0$.
\end{thm}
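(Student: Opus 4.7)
The plan is to reduce the statement, via Corollary \ref{2.6}, to proving that each $\lc^{i}_{I}(M)$ is itself a minimax $R$-module. Once minimaxness is in hand, Corollary \ref{2.6} (applied with $J=0$ and arbitrary $\fm\in\Max(R)$, hence in particular for $\fm\in\Max(R)\cap\V(I)$) yields $\Ext^{j}_R\bigl(R/\fm,\lc^{i}_{I}(M)\bigr)\in\mathcal{S}$ for all $i,j\geq 0$, and the ``in particular'' sentence is then just the $j=0$ case.

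First I would reduce to $M$ finite. Since $M$ is minimax, choose a short exact sequence
\[
0\lo N\lo M\lo A\lo 0
\]
with $N$ finite and $A$ artinian. An artinian $R$-module has $\Supp(A)\subseteq\Max(R)$, so $\dim A=0$, whence $\lc^{i}_{I}(A)=0$ for all $i\geq 1$ and $\lc^{0}_{I}(A)\subseteq A$ is artinian. The long exact sequence of local cohomology then gives isomorphisms $\lc^{i}_{I}(N)\cong\lc^{i}_{I}(M)$ for $i\geq 2$ together with an exact sequence
\[
0\lo \lc^{0}_{I}(N)\lo \lc^{0}_{I}(M)\lo \lc^{0}_{I}(A)\lo \lc^{1}_{I}(N)\lo \lc^{1}_{I}(M)\lo 0
\]
whose flanking terms are minimax. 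Because the class of minimax $R$-modules is Serre, it suffices to show that $\lc^{i}_{I}(N)$ is minimax for each $i\geq 0$, where $N$ is finite with $\dim N\leq 2$.

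Second I would handle the finite case dimension by dimension. The submodule $\lc^{0}_{I}(N)\subseteq N$ is finite, and Grothendieck vanishing gives $\lc^{i}_{I}(N)=0$ for $i>\dim N$, so only $i=1,2$ remain (and only $i=1$ if $\dim N\leq 1$). For these remaining degrees I would invoke the known fact that for a finite module $N$ with $\dim N\leq 2$ over a noetherian ring, every $\lc^{i}_{I}(N)$ is a minimax $R$-module: this combines Melkersson's artinianness criterion (giving that $\lc^{\dim N}_{I}(N)$ is artinian when $\dim N\leq 2$) with the analogous minimaxness statement for $\lc^{1}_{I}(N)$ when $\dim N=2$, which follows from the artinianness of $\lc^{0}_{\fm}\lc^{1}_{I}(N)$ together with a Krull-dimension argument reducing $\lc^{1}_{I}(N)/\Gamma_{\fm}(\lc^{1}_{I}(N))$ to the finite case.

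Finally, combining Steps 1 and 2, every $\lc^{i}_{I}(M)$ is minimax, and Corollary \ref{2.6} completes the proof.

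The main obstacle is Step 2, specifically the minimaxness of $\lc^{1}_{I}(N)$ when $\dim N=2$: the top local cohomology is handled cleanly by Melkersson's criterion, but the middle degree requires a finer reduction to the one-dimensional situation, and extracting this from the literature (e.g.\ Bahmanpour--Naghipour type results on low-dimensional cofiniteness and minimaxness) is the delicate part of the argument.
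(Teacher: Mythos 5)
Your proposed reduction is not the paper's route, and unfortunately Step~2 contains a false intermediate claim, so the argument as written does not go through.

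The paper's proof does not establish that the modules $\lc^i_I(M)$ are minimax; it handles the cases $i=0$ and $i>2$ trivially, handles $i=\dim M=2$ via artinianness of the top local cohomology (Divaani-Aazar--Esmkhani together with Proposition~\ref{2.5}(i)), and for the remaining middle degree $i=1$ it invokes a theorem on extension functors of local cohomology (the cited result from \emph{Extension functors of local cohomology modules}, Theorem~2.3) which gives membership of $\Ext^j_R\big(R/\fm,\lc^1_I(M)\big)$ in $\mathcal{S}$ directly, without ever asserting that $\lc^1_I(M)$ is minimax. That is essential, because the assertion you rely on --- that $\lc^1_I(N)$ is minimax whenever $N$ is finite with $\dim N\leq 2$ --- is false. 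Take $R=k[[x,y]]$, $I=xR$, $N=R$, $\fm=(x,y)$. Since $x$ is a non-zerodivisor, $\lc^1_I(R)\cong R_x/R$. Here $\Gamma_{\fm}(R_x/R)=0$ (because $x,y$ is a regular sequence, any element of $R_x/R$ killed by a power of $y$ is already zero), so the filtration you propose, $\Gamma_{\fm}\big(\lc^1_I(N)\big)\subseteq \lc^1_I(N)$, degenerates to $0\subseteq R_x/R$; and $R_x/R$ is certainly not finite. Moreover $R_x/R$ cannot be minimax at all: its localization at the height-one prime $(x)$ is $\E\big(R_{(x)}/(x)R_{(x)}\big)$, which is not a finitely generated $R_{(x)}$-module, so for any finite submodule $F\subseteq R_x/R$ the quotient $(R_x/R)/F$ still has $(x)$ in its support and hence is not artinian. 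This breaks the planned appeal to Corollary~\ref{2.6}, which requires minimaxness of the local cohomology module in the relevant degree.

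The rest of your reduction (passing from minimax $M$ to finite $N$, disposing of $i=0$ by finiteness of $\lc^0_I(N)\subseteq N$, disposing of $i>\dim N$ by Grothendieck vanishing, and disposing of $i=\dim N\leq 2$ by artinianness of the top local cohomology) is fine and parallels the paper, but the whole strategy hinges on the middle-degree minimaxness, and there the gap is genuine. To repair it you would need to replace ``$\lc^1_I(N)$ is minimax'' with an argument showing directly that $\Ext^j_R\big(R/\fm,\lc^1_I(N)\big)$ lies in $\mathcal{S}$, which is exactly what the cited extension-functor theorem provides in the paper's proof: it trades the (unattainable) structural statement about $\lc^1_I(N)$ for a spectral-sequence-type comparison with $\Ext^{\bullet}_R(R/\fm,N)$ and the already-controlled degrees $0$ and $2$.
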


\begin{proof}
Let $\fm\in\textmd{Max}(R)\cap\V(I)$. By Proposition \ref{2.5} (ii)
and the Grothendieck$^,$s vanishing theorem there is
nothing to prove for cases $i=0$ and $i>2$.
Now, assume that $0<i\leq2$. If dim $M=2$ and $i=2$, then
the result follows from \cite[Corollary 3.3]{DivEsm}
and Proposition \ref{2.5} (i).
Also, in the case $i=1$, the result is obtained from
\cite[Theorem 2.3 ]{AghTahVah}, by replacing $s:=j$,
$t:=1$, and $N:=R/\fm$, and Proposition \ref{2.5} (ii).
Finally if dim$M\leq1$, we can obtain the desired
result in similar way.
\end{proof}


\begin{cor} \label {2.15}
Let $R$ be a noetherian ring of dimension less than \emph{3}
and let $M$ be a minimax $R$-module. Let $R/\fm\in\mathcal{S}$
for any $\fm\in\emph{Max}(R)$. Then
$\Ext^{j}_R\big(R/\fm,\lc^{i}_{I}(M)\big)\in\mathcal{S}$ for any
$\fm\in\emph{Max}(R)\cap\V(I)$ and all $i,j\geq0$. In particular for the class of finite $R$-modules.
\end{cor}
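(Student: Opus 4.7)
The plan is to deduce this corollary directly from Theorem~\ref{2.14}. The first step is to observe that, because $R$ is noetherian of dimension less than $3$, every $R$-module, and in particular the minimax module $M$, has Krull dimension at most $\dim R<3$. Consequently the hypotheses of Theorem~\ref{2.14} are satisfied verbatim: $M$ is a minimax $R$-module with $\dim M<3$, and $R/\fm\in\mathcal{S}$ for every $\fm\in\Max(R)$. Its conclusion then immediately yields $\Ext^{j}_R(R/\fm,\lc^{i}_{I}(M))\in\mathcal{S}$ for every $\fm\in\Max(R)\cap\V(I)$ and all $i,j\geq 0$, which is the first assertion.

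For the ``in particular'' clause, the plan is to apply the first part with $\mathcal{S}$ chosen to be the class $\mathcal{F}$ of finitely generated $R$-modules. It is a standard fact that $\mathcal{F}$ is a Serre subcategory of the category of $R$-modules, being closed under submodules (since $R$ is noetherian), quotients, and extensions. Since every residue field $R/\fm$ is trivially finite, the hypothesis $R/\fm\in\mathcal{F}$ holds for all $\fm\in\Max(R)$. Hence the first part of the corollary, specialized to $\mathcal{S}=\mathcal{F}$, produces the finiteness of $\Ext^{j}_R(R/\fm,\lc^{i}_{I}(M))$ for all $i,j\geq 0$ and all $\fm\in\Max(R)\cap\V(I)$.

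No serious obstacle is expected: the whole corollary is a specialization of Theorem~\ref{2.14} in which the Krull dimension hypothesis on $M$ is transferred to the ambient ring $R$, together with the routine verification that $\mathcal{F}$ is a Serre class containing all residue fields of $R$. If anything subtle were to arise, it would only be in recalling that $M$ need not itself be finite (it is merely minimax) so that $\dim M$ makes sense and is controlled by $\dim R$; but this is immediate from the definition of Krull dimension of a module as the dimension of its support in $\Spec(R)$.
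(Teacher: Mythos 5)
Your proposal is correct and coincides with the paper's intent: the paper states Corollary \ref{2.15} without proof precisely because it follows at once from Theorem \ref{2.14} via the observation that $\dim M\le\dim R<3$, and the ``in particular'' clause is the routine specialization to the Serre class of finite $R$-modules.
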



\begin{cor}\label{2.16}
Let $R$ be a noetherian ring. Let $M$ be a minimax $R$-module
of krull dimension less than \emph{3}. Then the Bass numbers of
$\lc^{i}_{I}(M)$ are finite for all $i\geq0$, in particular it is true
when $\dim R\leqslant 2$.
\end{cor}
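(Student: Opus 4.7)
The plan is to reduce the finiteness of Bass numbers to Theorem \ref{2.14} by localizing at each prime. I will use the definition $\mu^{j}(\fp, N) = \dim_{k(\fp)} \Ext^{j}_{R_{\fp}}(k(\fp), N_{\fp})$, and must show, for every prime $\fp$ of $R$ and all $i, j \geq 0$, that this dimension is finite for $N = \lc^{i}_{I}(M)$.

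For $\fp \notin \V(I)$ the conclusion will be trivial, since $\Supp \lc^{i}_{I}(M) \subseteq \V(I)$ forces $\lc^{i}_{I}(M)_{\fp} = 0$. So the core case is $\fp \in \V(I)$, where I would set $R' := R_{\fp}$, $I' := I R_{\fp}$, $M' := M_{\fp}$, and apply Theorem \ref{2.14} to the ring $R'$. Two small things need checking first: that $M'$ remains minimax over $R'$ (the finite submodule in a minimax decomposition localizes to a finite module, while an artinian $R$-module has support inside $\Max(R)$, so its localization at $\fp$ is either zero, when $\fp$ is non-maximal, or remains artinian over $R_{\fp}$, when $\fp$ is maximal), and that $\dim M' \leq \dim M < 3$. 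Taking $\mathcal{S}$ to be the class of finitely generated $R'$-modules, one has $R'/\fp R_{\fp} \in \mathcal{S}$ and $\fp R_{\fp} \in \Max(R') \cap \V(I')$, so Theorem \ref{2.14} then guarantees that $\Ext^{j}_{R'}(R'/\fp R_{\fp}, \lc^{i}_{I'}(M'))$ is finitely generated over $R'$ for all $i, j \geq 0$.

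The flat base-change isomorphism $\lc^{i}_{I}(M)_{\fp} \cong \lc^{i}_{I'}(M')$ will identify this module with $\Ext^{j}_{R'}(k(\fp), \lc^{i}_{I}(M)_{\fp})$. Being annihilated by $\fp R_{\fp}$, such a finitely generated $R'$-module is automatically a finite-dimensional $k(\fp)$-vector space, which is exactly the statement $\mu^{j}(\fp, \lc^{i}_{I}(M)) < \infty$. The \emph{in particular} clause follows at once, since $\dim M \leq \dim R \leq 2 < 3$. The only delicate step I anticipate is the minimax-preservation claim under localization at a non-maximal prime, which is handled by the support observation above; after that, the result is a direct transfer of Theorem \ref{2.14} from the noetherian ring $R'$ to the single maximal ideal $\fp R_{\fp}$.
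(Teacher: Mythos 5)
Your argument is correct and is the natural (and evidently intended) route: the paper gives no explicit proof of Corollary \ref{2.16}, but the passage from Theorem \ref{2.14} to finiteness of all Bass numbers must go exactly through the localization you describe, since Theorem \ref{2.14} only controls $\Ext$ against residue fields of maximal ideals in $\V(I)$. Your checks — that $\lc^{i}_{I}(M)_{\fp}=0$ for $\fp\not\supseteq I$, that localization preserves minimaxness (the artinian factor has support in $\Max(R)$ and so either dies or stays artinian), that dimension does not increase under localization, and the flat base-change identifications $\lc^{i}_{I}(M)_{\fp}\cong\lc^{i}_{IR_\fp}(M_\fp)$ and $\Ext^{j}_{R}(R/\fp,-)_{\fp}\cong\Ext^{j}_{R_\fp}(k(\fp),(-)_{\fp})$ — are exactly the points that need verifying, and each is handled correctly.
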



One can generalize Theorem \ref{2.14} for local cohomology
modules with respect to a pair of ideals, but in local case.
To do this, we need the
following lemma.

\begin{lem}\label{2.17}
Let $(R,\fm)$ be a local ring and $M$ be a minimax $R$-module
of finite krull dimension d. Then $\lc^{d}_{I,J}(M)$ is artinian.
\begin{proof}
Let $N$ be a finite submodule of $M$ such that $A:=M/N$
is artinian. Since $\textmd{dim}~N\leq d$, by \cite[Theorem 2.1]{ChWa},
$\lc^{d}_{I,J}(N)$ is artinian. Also, the exact sequence
$0\rightarrow N\rightarrow M \rightarrow A\rightarrow0$
induces the following exact sequence
$$0\rightarrow\Gamma_{I,J}(N)\rightarrow\Gamma_{I,J}(M)\rightarrow \Gamma_{I,J}(A)
\rightarrow \lc^{1}_{I,J}(N)\rightarrow \lc^{1}_{I,J}(M) \rightarrow0,$$
and $\lc^{i}_{I,J}(N)\cong\lc^{i}_{I,J}(M)$ for all $i\geq2$.
By the exactness of the above sequence, it is easy to see that $\lc^{d}_{I,J}(M)$ is
artinian.
\end{proof}
\end{lem}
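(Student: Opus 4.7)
The plan is to exploit the minimax structure of $M$ and reduce the problem to two well-understood cases: finite modules and artinian modules. By definition of minimax there exists a finite submodule $N \subseteq M$ such that $A := M/N$ is artinian, giving the short exact sequence
$$0 \longrightarrow N \longrightarrow M \longrightarrow A \longrightarrow 0.$$
Since $\dim N \leq \dim M = d$, the first step is to apply \cite{ChWa} (Theorem 2.1) to conclude that $\lc^{d}_{I,J}(N)$ is artinian, which disposes of the finite part at the top degree. (In lower degrees nothing is claimed about $\lc^{i}_{I,J}(N)$.)

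Next I would handle the artinian quotient $A$. Over the local ring $(R,\fm)$ every artinian module embeds into a finite direct sum of copies of $E(R/\fm)$; using the behaviour of the functors $\lc^{i}_{I,J}$ on indecomposable injectives established in \cite{TakYoYo}, one obtains $\lc^{i}_{I,J}(A) = 0$ for all $i \geq 1$, while $\Gamma_{I,J}(A)$ is a quotient of the artinian module $A$ and so is itself artinian.

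Feeding these inputs into the long exact sequence of local cohomology attached to the displayed short exact sequence, the vanishing $\lc^{i}_{I,J}(A) = 0$ for $i \geq 1$ collapses it to
$$0 \to \Gamma_{I,J}(N) \to \Gamma_{I,J}(M) \to \Gamma_{I,J}(A) \to \lc^{1}_{I,J}(N) \to \lc^{1}_{I,J}(M) \to 0,$$
together with isomorphisms $\lc^{i}_{I,J}(N) \cong \lc^{i}_{I,J}(M)$ for all $i \geq 2$. Splitting into cases on $d$, I would then conclude: if $d \geq 2$ then $\lc^{d}_{I,J}(M) \cong \lc^{d}_{I,J}(N)$ is artinian; if $d = 1$ then $\lc^{1}_{I,J}(M)$ is a homomorphic image of the artinian module $\lc^{1}_{I,J}(N)$, hence artinian; and if $d = 0$ then $N$ (finite of Krull dimension $0$) has finite length, $M$ is itself artinian as an extension of artinians, and so is $\Gamma_{I,J}(M) = \lc^{0}_{I,J}(M)$.

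The only non-routine ingredient I anticipate is the vanishing $\lc^{i}_{I,J}(A) = 0$ for $i \geq 1$ on artinian $A$; once that is granted the argument is a formal manipulation of the long exact sequence, so this is where I would expect to cite the $(I,J)$-analogue of the classical structure of local cohomology on injective envelopes.
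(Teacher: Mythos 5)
Your proposal is correct and follows essentially the same path as the paper's proof: the minimax decomposition $0 \to N \to M \to A \to 0$, the artinianness of $\lc^d_{I,J}(N)$ via \cite[Theorem 2.1]{ChWa}, the vanishing $\lc^i_{I,J}(A)=0$ for $i\geq 1$ (which the paper uses implicitly when it writes the truncated exact sequence), and the resulting isomorphism or surjection onto $\lc^d_{I,J}(M)$. Your version is actually more explicit: the paper leaves the vanishing on the artinian part and the low-dimension cases ($d=0,1$) unstated, whereas you spell them out. Two small remarks. First, the cleanest route to $\lc^i_{I,J}(A)=0$ for $i\geq 1$ is not the injective-hull argument you sketch (which requires knowing the whole minimal injective resolution of $A$ is built from $E(R/\fm)$) but simply that $A$ artinian over $(R,\fm)$ forces $\Supp(A)\subseteq\{\fm\}\subseteq W(I,J)$, so $A$ is $(I,J)$-torsion and the vanishing is \cite[Corollary 1.13]{TakYoYo}. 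Second, $\Gamma_{I,J}(A)$ is a submodule of $A$ (in fact equal to $A$ here), not a quotient, though of course it is artinian either way.
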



\begin{thm}\label{2.18}
Let $(R,\fm)$ be a local ring and $M$ be a minimax $R$-module
of krull dimension less than \emph{3}. Let $\mathcal{S}\neq0$.
Then $\Ext^{j}_R\big(R/\fm,\lc^{i}_{I,J}(M)\big)\in\mathcal{S}$
for all $i,j\geq0$.
\begin{proof}
Apply Lemma \ref{2.17}, Proposition \ref{2.5}, and
the same method of the proof of Theorem \ref{2.14}.
\end{proof}
\end{thm}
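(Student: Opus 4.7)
The plan is to mirror the case-by-case analysis of Theorem \ref{2.14}, with $\lc^{i}_{I}$ replaced by $\lc^{i}_{I,J}$ and with Lemma \ref{2.17} supplying the top-degree input. Since $(R,\fm)$ is local and $\mathcal{S}\neq 0$, Lemma \ref{2.1} gives $R/\fm\in\mathcal{S}$, and then Proposition \ref{2.5}(iii) guarantees that $\Ext^{j}_{R}(R/\fm,X)\in\mathcal{S}$ for every minimax $R$-module $X$ and every $j\geq 0$. Hence the problem reduces to showing that each $\lc^{i}_{I,J}(M)$ is either zero or minimax, or, failing that, that $\Ext^{j}_{R}(R/\fm,\lc^{i}_{I,J}(M))$ already lies in $\mathcal{S}$ by a direct finite-length argument through Lemma \ref{2.2}.

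Three of the cases are immediate. For $i=0$, the module $\Gamma_{I,J}(M)$ is a submodule of the minimax module $M$, hence minimax. For $i>\dim M$, the module $\lc^{i}_{I,J}(M)$ vanishes by the Grothendieck-type vanishing theorem for local cohomology with respect to a pair of ideals proved in \cite{TakYoYo}. For $i=\dim M\leq 2$, Lemma \ref{2.17} yields that $\lc^{\dim M}_{I,J}(M)$ is artinian, in particular minimax. In each of these three situations Proposition \ref{2.5}(iii) finishes the argument.

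What remains is the middle case $\dim M=2$ with $i=1$, which is the genuine obstacle and corresponds precisely to the step in Theorem \ref{2.14} carried out via \cite{AghTahVah}. Following the same method, I would split $M$ by the defining minimax exact sequence $0\to N\to M\to A\to 0$ with $N$ finite and $A$ artinian, and pass to the induced long exact sequence of $(I,J)$-local cohomology. The pieces coming from the artinian $A$ remain artinian in each degree (by the same kind of two-step argument used in Lemma \ref{2.17}), so after applying $\Ext^{j}_{R}(R/\fm,-)$ they already lie in $\mathcal{S}$ by Proposition \ref{2.5}(iii). One is then reduced to controlling $\Ext^{j}_{R}(R/\fm,\lc^{1}_{I,J}(N))$ for a finite $N$ with $\dim N\leq 2$, which can be handled just as the $i=1$ step of Theorem \ref{2.14} was handled, combined with Lemma \ref{2.2} to convert the resulting finite length into membership in $\mathcal{S}$. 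The technical heart of the proof, and the main obstacle, is precisely this transport of the dimension-$\leq 2$ argument for $\lc^{1}_{I}(N)$ to its $(I,J)$-analogue for $\lc^{1}_{I,J}(N)$; the case bookkeeping above is then routine.
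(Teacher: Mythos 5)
Your outline tracks the paper's (very terse) proof precisely: Lemma \ref{2.17} supplies artinianness of the top local cohomology $\lc^{\dim M}_{I,J}(M)$, Proposition \ref{2.5} converts minimax and artinian inputs into $\mathcal{S}$-membership of their $\Ext$-modules, Grothendieck-type vanishing kills the degrees $i>\dim M$, and the intermediate degree $i=1$ is handled by transporting the $\lc^{1}_{I}$ argument from Theorem \ref{2.14} to $\lc^{1}_{I,J}$ --- a transport that neither you nor the paper actually spells out, both deferring to ``the method of \cite{AghTahVah}.'' So you correctly identify where the real work lives, and your case bookkeeping is right.

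The one point where you deviate is the extra splitting $0\to N\to M\to A\to 0$ for the $i=1$ step. This is a harmless variation (it mirrors the proof of Lemma \ref{2.17}, where an artinian module over a local ring with $I$ proper is $(I,J)$-torsion, so all its higher $(I,J)$-cohomology vanishes), but it is not what the paper does. In Theorem \ref{2.14} the reference \cite[Theorem~2.3]{AghTahVah} is applied directly to the minimax module $M$, with its $\Ext$-hypotheses supplied by Proposition \ref{2.5}(ii); no reduction to a finite module is required. Consequently your final step, invoking Lemma \ref{2.2} to ``convert the resulting finite length into membership in $\mathcal{S}$,'' mislocates where the conversion actually occurs: the reference is stated for arbitrary Serre classes, and the conversion into $\mathcal{S}$ happens through Proposition \ref{2.5} applied to the $\Ext$-modules of $M$, not by first producing a finite-length module and then applying Lemma \ref{2.2}. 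This does not break your argument --- it is a valid, if slightly roundabout, way to reach the same conclusion --- but it is worth being aware that the paper's route is more direct.
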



The next ressults can be useful for finiteness of Bass numbers of
local cohomology modules over a noetherian ring and modules
of krull dimension 3.
Recall that an $R$-module $M$ is called locally minimax if
$M_\fm$ is minimax for any $\fm\in$ Max$(R)$ (see \cite{AghMel2}).

\begin{thm} \label {2.19}
Let $R$ be a noetherian ring and $M$ be an $R$-module
of krull dimension \emph{3}.
Let $R/\fm\in\mathcal{S}$ for any $\fm\in\emph{Max}(R)$.
Then $\Ext^{j}_R\big(R/\fm,\lc^{i}_{I}(M)\big)\in\mathcal{S}$
for any $\fm\in\emph{Max}(R)\cap V(I)$ and all $i,j\geq0$,
if one of the following conditions holds:
\begin{enumerate}

  \item[\rm{(}i\rm{)}] $M$ and $\lc^{1}_{I}(M)$ are minimax;

  \item[\rm{(}ii\rm{)}] $M$ is finite and $\lc^{1}_{I}(M)$ is
  locally minimax;

  \item[\rm{(}iii\rm{)}] $M$ and $I^{m}\lc^{2}_{I}(M)$ are minimax
  for some $m\in\mathbb{N}_0$;

  \item[\rm{(}iv\rm{)}] $M$ is minimax, $I^{m}\lc^{2}_{I}(M)$ is
  locally minimax, and $\Hom_{R}\big(R/I,I^{m}\lc^{2}_{I}(M)\big)$ is
  finite for some $m\in\mathbb{N}_0$;

  \item[\rm{(}v\rm{)}] $(R,\fm)$ is local, $M$ is minimax, and $I^{m}\lc^{2}_{I}(M)$ is locally minimax for some $m\in\mathbb{N}_0$.
\end{enumerate}

\begin{proof}
Let $\fm\in\emph{Max}(R)\cap\V(I)$.

(i) By \cite[Corollary 3.3]{DivEsm}, $\lc^{3}_{I}(M)$ is artinian.
Thus the assertion is true for $i=0,1,3$, by Proposition \ref{2.5}.
Also, for $i=2$, we apply \cite[Theorem 2.3]{AghTahVah}, by replacing
$s:=j$, $t:=2$, and $N:=R/\fm$.

(ii) By \cite[Theorem 2.3]{BahNag} and \cite[Theorem 2.6]{AghMel2},
$\lc^{1}_{I}(M)$ is minimax. Now, the assertion follows from part (i).

(iii) By the short exact sequence
$$0\rightarrow I^m\lc^{2}_{I}(M)\rightarrow
\lc^{2}_{I}(M)\rightarrow \lc^{2}_{I}(M)/I^m\lc^{2}_{I}(M)
\rightarrow0,$$
and \cite[Theorem 3.1]{AsgTo}, for the class of minimax $R$-modules,
we get $\lc^{2}_{I}(M)$ is minimax. Now, for $i=0,2,3$, the claim
follows from \cite[Corollary 3.3]{DivEsm} and Proposition \ref{2.5}.
In the case $i=2$, apply \cite[Theorem 2.3 ]{AghTahVah},
by replacing $s:=j$, $t:=1$, $N:=R/\fm$ and Proposition \ref{2.5}.

(iv) Apply \cite[Theorem 2.6]{AghMel2} and part (iii).

(v) By proof of Lemma \ref{2.17}, we may assume that $M$
is finite. Now, the assertion follows from
\cite[Propositions 3.4 , 2.2]{AghMel2} and part (iii).
\end{proof}
\end{thm}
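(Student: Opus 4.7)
The plan is to mirror the strategy of Theorem \ref{2.14}, treating each cohomological degree $i$ separately. Since $\dim M=3$, Grothendieck's vanishing theorem gives $\lc^{i}_{I}(M)=0$ for $i\geq 4$. Fix $\fm\in\Max(R)\cap\V(I)$ throughout. The uniform skeleton I would set up is this: once $M$ itself is minimax (which holds in all five parts, after possible reduction), $\Gamma_{I}(M)$ is a submodule of $M$ and hence minimax, so Proposition \ref{2.5}(ii) disposes of $i=0$; and by \cite[Corollary 3.3]{DivEsm} the module $\lc^{3}_{I}(M)$ is artinian, so Proposition \ref{2.5}(i) disposes of $i=3$. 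The remaining work concerns $i=1$ and $i=2$.

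For part (i), $\lc^{1}_{I}(M)$ is minimax by assumption, so Proposition \ref{2.5}(ii) handles $i=1$; I would then close $i=2$ by invoking \cite[Theorem 2.3]{AghTahVah} with $s:=j$, $t:=2$, $N:=R/\fm$, whose hypotheses on the degrees $<2$ have just been verified. For part (ii), I would first upgrade $\lc^{1}_{I}(M)$ from locally minimax to minimax by combining \cite[Theorem 2.3]{BahNag} with \cite[Theorem 2.6]{AghMel2}, and then invoke (i).

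For part (iii), the short exact sequence
$$0\lo I^{m}\lc^{2}_{I}(M)\lo \lc^{2}_{I}(M)\lo \lc^{2}_{I}(M)/I^{m}\lc^{2}_{I}(M)\lo 0$$
together with \cite[Theorem 3.1]{AsgTo} applied to the Serre class of minimax modules forces $\lc^{2}_{I}(M)$ itself to be minimax, using that the quotient is $I$-torsion and that $M$ is minimax. Now $i=0,2,3$ are immediate from the skeleton plus Proposition \ref{2.5}, and $i=1$ follows from \cite[Theorem 2.3]{AghTahVah} with $t:=1$. Part (iv) then reduces to (iii) once we promote $I^{m}\lc^{2}_{I}(M)$ to minimax via \cite[Theorem 2.6]{AghMel2}, using the extra hypothesis on $\Hom_{R}(R/I,I^{m}\lc^{2}_{I}(M))$. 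For part (v), the local hypothesis allows me to run the splitting argument used in the proof of Lemma \ref{2.17} to reduce to the case that $M$ is finite; then \cite[Propositions 3.4 and 2.2]{AghMel2} upgrade $I^{m}\lc^{2}_{I}(M)$ to minimax, and (iii) applies.

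The main obstacle I anticipate is the $i=2$ case in parts (i) and (ii): the hypotheses of \cite[Theorem 2.3]{AghTahVah} must be verified carefully for the lower cohomologies in our setting, and any failure propagates and breaks the entire reduction. A secondary delicate point is that in parts (iii)--(v) the conclusion that $\lc^{2}_{I}(M)$ itself is minimax depends on $\lc^{2}_{I}(M)/I^{m}\lc^{2}_{I}(M)$ being amenable to \cite[Theorem 3.1]{AsgTo}, so the cofinite-type hypotheses must line up precisely with the Serre class of minimax modules; otherwise one only controls the submodule $I^{m}\lc^{2}_{I}(M)$ but not the whole of $\lc^{2}_{I}(M)$.
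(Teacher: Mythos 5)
Your proposal is correct and follows essentially the same route as the paper's own proof, citing the identical results (\cite[Corollary 3.3]{DivEsm}, \cite[Theorem 2.3]{AghTahVah}, \cite[Theorem 2.3]{BahNag}, \cite[Theorems 2.6]{AghMel2}, \cite[Theorem 3.1]{AsgTo}, \cite[Propositions 3.4, 2.2]{AghMel2}) in the same logical order and with the same per-degree decomposition. You also silently correct a typo in the paper's proof of part (iii), where the sentence beginning ``In the case $i=2$'' should read $i=1$ (consistent with the choice $t:=1$ there).
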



\begin{cor}\label{2.20}
Let $R$ be a noetherian ring and $M$ be a minimax $R$-module
of krull dimension $d\leq\emph{3}$. Then the Bass numbers of $\lc^{i}_{I}(M)$
are finite for all $i\geq0$, if one of the following conditions holds:
\begin{enumerate}

  \item[\rm{(}i\rm{)}] $M$ and $\lc^{1}_{I}(M)$ are minimax;

  \item[\rm{(}ii\rm{)}] $M$ is finite and $\lc^{1}_{I}(M)$ is
  locally minimax;

  \item[\rm{(}iii\rm{)}] $M$ and $I^{m}\lc^{2}_{I}(M)$ are minimax
  for some $m\in\mathbb{N}_0$;

  \item[\rm{(}iv\rm{)}] $M$ is minimax, $I^{m}\lc^{2}_{I}(M)$ is
  locally minimax, and $\Hom_{R}\big(R/I,I^{m}\lc^{2}_{I}(M)\big)$ is
  finite for some $m\in\mathbb{N}_0$;

  \item[\rm{(}v\rm{)}] $M$ is minimax, and $I^{m}\lc^{2}_{I}(M)$ is locally minimax, for some $m\in\mathbb{N}_0$.
\end{enumerate}
In particular the statments are true when $\dim R\leqslant 3$.
\end{cor}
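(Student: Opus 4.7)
The plan is to apply Theorem \ref{2.19} with $\mathcal{S}$ taken to be the Serre subcategory of finitely generated $R$-modules. Since $R$ is noetherian this is a Serre class, and the hypothesis $R/\fm\in\mathcal{S}$ is trivially satisfied. The five cases (i)--(v) of the corollary are worded to match exactly the five hypotheses of Theorem \ref{2.19}, so when $\dim M=3$ that theorem yields finite generation of $\Ext^{j}_{R}\bigl(R/\fm,\lc^{i}_{I}(M)\bigr)$ for every $\fm\in\Max(R)\cap\V(I)$ and all $i,j\geq 0$. When $\dim M<3$ the same conclusion follows already from Theorem \ref{2.14} applied with the same Serre class, without needing the auxiliary hypotheses (i)--(v).

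Converting from Ext-finiteness to finiteness of Bass numbers is now routine. For $\fm\in\Max(R)\setminus\V(I)$ one has $\lc^{i}_{I}(M)_{\fm}=0$, so all Bass numbers there vanish. For $\fm\in\Max(R)\cap\V(I)$, the finitely generated module $\Ext^{j}_{R}\bigl(R/\fm,\lc^{i}_{I}(M)\bigr)$ is annihilated by $\fm$, hence of finite length; its localization $\Ext^{j}_{R_{\fm}}\bigl(R_{\fm}/\fm R_{\fm},\lc^{i}_{I}(M)_{\fm}\bigr)$ is then a finite-dimensional vector space over $R_{\fm}/\fm R_{\fm}$, giving $\mu^{j}\bigl(\fm,\lc^{i}_{I}(M)\bigr)<\infty$. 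For a non-maximal $\fp\in\V(I)$, each of the hypotheses in (i)--(v) passes to $R_{\fp}$ since minimaxness, local minimaxness, and finite generation are all local properties; rerunning Theorem \ref{2.19} over $R_{\fp}$ yields finite generation of $\Ext^{j}_{R_{\fp}}\bigl(R_{\fp}/\fp R_{\fp},\lc^{i}_{I}(M)_{\fp}\bigr)$ and hence finiteness of $\mu^{j}\bigl(\fp,\lc^{i}_{I}(M)\bigr)$.

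The closing ``in particular'' clause is immediate: whenever $\dim R\leq 3$, every $R$-module satisfies $\dim M\leq 3$ automatically, so the Krull-dimension hypothesis becomes vacuous. There is essentially no genuine obstacle in the argument; the content is contained in Theorem \ref{2.19} (together with Theorem \ref{2.14} in low dimensions), and the only care needed is the standard passage from finite generation of the relevant Ext modules to finiteness of Bass numbers.
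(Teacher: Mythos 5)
Your argument is correct and follows essentially the same route as the paper, whose proof is only the one line ``For part (v), apply localization and Theorem \ref{2.19}(v).''\ You have filled in the steps the paper leaves implicit: taking $\mathcal{S}$ to be the Serre class of finite $R$-modules, invoking Theorem \ref{2.14} when $\dim M<3$ and Theorem \ref{2.19} when $\dim M=3$, observing that $\Supp\lc^{i}_{I}(M)\subseteq\V(I)$ kills the Bass numbers outside $\V(I)$, using that a finitely generated $R$-module annihilated by $\fm$ has finite length to convert Ext-finiteness into finiteness of $\mu^{j}(\fm,-)$, and localizing at non-maximal $\fp\in\V(I)$ (noting that minimaxness, local minimaxness, and finite generation of the relevant modules all descend to $R_{\fp}$, and that $\dim_{R_\fp}M_\fp\le\dim_R M\le 3$) to handle all primes rather than only maximal ones. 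The only localization the paper explicitly flags is the one needed for part (v), where Theorem \ref{2.19}(v) assumes $(R,\fm)$ local; your write-up makes the same move uniformly, which is cleaner and covers the non-maximal primes that the paper's terse proof leaves to the reader.
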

\begin{proof}
For part (v), apply localization and Theorem \ref{2.19} (v).
\end{proof}


\section{Cofiniteness and Artinianness of $\lc^{i}_{I,J}(M)$}


In this section, we need the concept of $(I,J)$-cofinite modules and $\cd(I,J,M)$.
An $R$-module $M$ is called $(I,J)$-cofinite if Supp$(M)\subseteq W(I,J)$
and $\Ext^i_R(R/I,M)$ is a finite $R$-module, for all $i\geq0$.
Also $~\cd(I,J,M)={\sup\big\{i\in\mathbb{N}_0}\mid H^i_{I,J}(M)\neq0\big\}$
(see \cite{TehTa} and \cite{ChWa}, resp.)

\begin{thm}\label{3.1}
Let $(R,\fm)$ be a local ring and $M$ be a finite $R$-module.
Let $t\in\mathbb{N}$ be an integer such that
$\Supp(\lc^{i}_{I,J}(M))\subseteq \{ \fm \}$ for all $i<t$.
Then, for all $i< t$, the $R$-module $\lc^{i}_{I,J}(M)$
 is artinian and $I$-cofinite.
\begin{proof}
We do this by induction on $t$. When $t=1$, it is obvious that
$\lc^{0}_{I,J}(M)$ is artinian $I$-cofinite $R$-module,
since it is a finite module with support in $\{\fm\}$. Now, suppose that
$t\geq2$ and the case $t-1$ is settled.
By \cite[Corollary 1.13]{TakYoYo}, we may assume that $M$ is
$(I,J)$-torsion free, and so $I$-torsion free $R$-module.
Therefore, by \cite[Lemma 2.1.1]{BroSh}, there exists
$x\in I\setminus\bigcup_{\fp\in \Ass(M)}\fp$ such that
$0\rightarrow M^{{.x}\atop{\longrightarrow}} M\rightarrow M/xM \rightarrow0$
is exact. Now, by the exact sequence
$$\lc^{t-2}_{I,J}(M)\rightarrow\lc^{t-2}_{I,J}(M/xM)\rightarrow
\lc^{t-1}_{I,J}(M)^{{.x}\atop{\lo}}\lc^{t-1}_{I,J}(M),$$
we get the following exact sequence
$$\lc^{t-2}_{I,J}(M)\rightarrow\lc^{t-2}_{I,J}(M/xM)\rightarrow
\big(0:_{\lc^{t-1}_{I,J}(M)}x\big)\rightarrow0.$$
Thus by inductive hypothesis $\lc^{t-2}_{I,J}(M/xM)$ is
artinian $I$-cofinite . So $\big(0:_{\lc^{t-1}_{I,J}(M)}x\big)$
is artinian $I$-cofinite. As Supp$(\lc^{t-1}_{I,J}(M))\subseteq
 \{ \fm \}$, hence $\lc^{t-1}_{I,J}(M)$ is $I$-torsion.
 Now, the assertion follows from \cite[Proposition 4.1]{Mel1}.
\end{proof}
\end{thm}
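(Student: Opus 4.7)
My plan is to proceed by induction on $t$. For the base case $t=1$, the only module to examine is $\lc^{0}_{I,J}(M)=\Gamma_{I,J}(M)$, which is a submodule of the finite module $M$, hence finite; being finite with support in $\{\fm\}$, it has finite length, so it is automatically artinian and $I$-cofinite. For the inductive step, assume $t\ge 2$ and that the conclusion holds with $t$ replaced by $t-1$. The inductive hypothesis, applied to $M$ with parameter $t-1$, already gives the artinianness and $I$-cofiniteness of $\lc^{i}_{I,J}(M)$ for $i<t-1$, so the real work is to handle the single module $\lc^{t-1}_{I,J}(M)$.

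To set this up, I would first replace $M$ by $M/\Gamma_{I,J}(M)$. By \cite[Corollary 1.13]{TakYoYo} this does not change $\lc^{i}_{I,J}(M)$ for $i\ge 1$, so I may assume $M$ is $(I,J)$-torsion free. Since $\Gamma_{I}(M)\subseteq\Gamma_{I,J}(M)=0$, the module $M$ is then $I$-torsion free, and prime avoidance (applied to $\Ass(M)$) produces $x\in I$ that is $M$-regular. The exact sequence $0\to M\xrightarrow{\cdot x}M\to M/xM\to 0$ yields the standard long exact sequence in $\lc^{\bullet}_{I,J}$, and in degree $t-2$ it provides a surjection of $\lc^{t-2}_{I,J}(M/xM)$ onto $(0:_{\lc^{t-1}_{I,J}(M)}x)$.

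The crucial preparatory observation is that the hypothesis of the theorem transfers to $M/xM$ for the smaller parameter $t-1$: for each $i<t-1$, the long exact sequence shows $\lc^{i}_{I,J}(M/xM)$ sits between $\lc^{i}_{I,J}(M)$ and $\lc^{i+1}_{I,J}(M)$, both of which have support in $\{\fm\}$ by assumption (since $i+1\le t-1<t$), so $\Supp(\lc^{i}_{I,J}(M/xM))\subseteq\{\fm\}$ as well. The inductive hypothesis then applies to $M/xM$, yielding in particular that $\lc^{t-2}_{I,J}(M/xM)$ is artinian and $I$-cofinite, and hence so is its quotient $(0:_{\lc^{t-1}_{I,J}(M)}x)$.

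It remains to upgrade this control of the $x$-socle to control of the whole module. Since $\Supp(\lc^{t-1}_{I,J}(M))\subseteq\{\fm\}\subseteq\V(I)$, a standard finite-cyclic-subquotient argument shows $\lc^{t-1}_{I,J}(M)$ is $I$-torsion, so Melkersson's criterion in the form \cite[Proposition 4.1]{Mel1} promotes the artinianness and $I$-cofiniteness of $(0:_{\lc^{t-1}_{I,J}(M)}x)$ to the entire module $\lc^{t-1}_{I,J}(M)$. The step I expect to require the most care is the reduction to $(I,J)$-torsion free $M$ together with the choice of a regular element that lies in $I$ itself (rather than merely in some ideal of $\tilde{W}(I,J)$), since the whole cofiniteness conclusion is measured against $I$; once that is handled, the Melkersson-style finish is essentially automatic.
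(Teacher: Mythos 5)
Your proposal is correct and follows essentially the same inductive argument as the paper: reduce to the $(I,J)$-torsion-free case, choose an $M$-regular $x\in I$, use the long exact sequence to surject $\lc^{t-2}_{I,J}(M/xM)$ onto $(0:_{\lc^{t-1}_{I,J}(M)}x)$, and finish with Melkersson's criterion. You are, if anything, slightly more careful than the printed proof, since you explicitly check via the long exact sequence that $\Supp(\lc^{i}_{I,J}(M/xM))\subseteq\{\fm\}$ for all $i<t-1$ before invoking the inductive hypothesis on $M/xM$, a verification the paper leaves tacit.
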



\begin{thm} \label {3.2}
Let $M$ be an $R$-module such that $\Ext^{i}_R(R/I,M)$ is finite
for all $i\geq0$. Let $~t\in\mathbb{N}_0$ be such that
$\lc^{i}_{I,J}(M)$ is $(I,J)$-cofinite for all $i\neq t$.
Then $\lc^{t}_{I,J}(M)$ is $(I,J)$-cofinite.
\begin{proof}
Apply \cite[Theorem 3.11]{AhSad}.
\end{proof}
\end{thm}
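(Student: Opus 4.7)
The plan is to invoke a Grothendieck spectral-sequence argument. First, observe that $(I,J)$-cofiniteness decomposes into two requirements: $\Supp(-)\subseteq W(I,J)$ and finite generation of $\Ext^p_R(R/I,-)$ for every $p\geq 0$. The support containment $\Supp(\lc^t_{I,J}(M))\subseteq W(I,J)$ is automatic for every right derived functor of $\Gamma_{I,J}$, so only the Ext-finiteness has to be established for $i=t$.

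The key input is the composite-functor spectral sequence. Since every $R$-linear map from $R/I$ is annihilated by $I$ and therefore lands in $\Gamma_I(M)\subseteq\Gamma_{I,J}(M)$, we have an equality of functors $\Hom_R(R/I,-)\circ\Gamma_{I,J}(-)=\Hom_R(R/I,-)$. Combined with the Takahashi--Yoshino--Yoshizawa fact that $\Gamma_{I,J}$ preserves injectivity, this yields a first-quadrant Grothendieck spectral sequence
$$E_2^{p,q}=\Ext^p_R\big(R/I,\lc^q_{I,J}(M)\big)\Longrightarrow\Ext^{p+q}_R(R/I,M).$$
By hypothesis, every $E_2^{p,q}$ with $q\ne t$ is finitely generated (from $(I,J)$-cofiniteness of $\lc^q_{I,J}(M)$), and every abutment $\Ext^n_R(R/I,M)$ is finitely generated by assumption on $M$.

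The argument is to show that $E_2^{p,t}$ is finite for each fixed $p\geq 0$. On the $r$-th page with $r\geq 2$, the outgoing differential $d_r:E_r^{p,t}\to E_r^{p+r,t-r+1}$ has target a subquotient of $E_2^{p+r,t-r+1}$, which is finite since $t-r+1\ne t$; similarly the incoming differential $d_r:E_r^{p-r,t+r-1}\to E_r^{p,t}$ has finite source. Hence the transition $E_{r+1}^{p,t}=\ker d_r^{p,t}/\Image d_r^{p-r,t+r-1}$ differs from $E_r^{p,t}$ only by finite kernels and finite cokernels. Since $E_\infty^{p,t}$ sits as a subquotient of the finite module $\Ext^{p+t}_R(R/I,M)$, a reverse induction on the page number---starting at some $r>\max(p,t+1)$, where both adjacent differentials vanish for dimension reasons---transports finiteness from $E_\infty^{p,t}$ back to $E_2^{p,t}$, using closure of the class of finite modules under extensions.

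The main obstacle is purely the spectral-sequence bookkeeping: one must verify pagewise that both the source and target of the relevant differentials remain finite, and that the reverse induction terminates after finitely many pages. No conceptual difficulty arises once the spectral sequence is in hand, since the argument is essentially forced by closure of finitely generated modules under subquotients and extensions; indeed the same recipe would prove the analogous result with any Serre subcategory in place of the class of finite modules, which is presumably the form in which \cite[Theorem 3.11]{AhSad} is invoked.
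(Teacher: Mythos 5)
Your proof is correct. The paper's own ``proof'' is just a citation to \cite[Theorem 3.11]{AhSad}, so a line-by-line comparison is not possible; but the engine you build --- the Grothendieck spectral sequence
$$E_2^{p,q}=\Ext^p_R\big(R/I,\lc^q_{I,J}(M)\big)\Longrightarrow\Ext^{p+q}_R(R/I,M),$$
obtained from $\Hom_R(R/I,-)\circ\Gamma_{I,J}=\Hom_R(R/I,-)$ plus the fact that $\Gamma_{I,J}$ takes injectives to injectives over a noetherian ring, followed by a reverse induction on the page number --- is the standard and essentially forced route for ``one exceptional index'' cofiniteness statements, and is surely what underlies the cited theorem. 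Your bookkeeping is sound: the support condition $\Supp\lc^t_{I,J}(M)\subseteq W(I,J)$ is automatic for any right derived functor of $\Gamma_{I,J}$; for $r\ge 2$ both differentials touching $E_r^{p,t}$ go to or come from a row $q\ne t$ and so have finite target/source; the position $(p,t)$ stabilizes once $r>\max(p,t+1)$; and $E_\infty^{p,t}$ is a subquotient of the finite $\Ext^{p+t}_R(R/I,M)$, so finiteness propagates back to $E_2^{p,t}$ by closure of finite modules under subquotients and extensions. Your closing observation that the identical argument works for an arbitrary Serre subcategory is also exactly right, and is presumably the form in which \cite[Theorem 3.11]{AhSad} is stated. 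The only thing you gain over the paper is self-containment; the only thing you lose is brevity.
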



\begin{thm} \label {3.3}
Let $M$ be a finite $R$-module. If
$~\emph{cd}(I,J,M)\leq1$, then
$~\lc^{i}_{I,J}(M)$ is $(I,J)$-cofinite for all $i\geqslant 0$.
\begin{proof}
When $i=0$ and $i\geq2$, the claim is true, since $\Gamma_{I,J}(M)$
is finite and $\cd(I,J,M)\leq1$.  For $i=1$, apply Theorem \ref{3.2}.
\end{proof}
\end{thm}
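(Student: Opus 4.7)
The plan is to split into three ranges of $i$ and reduce the only nontrivial case to Theorem \ref{3.2}. The trivial endpoints handle themselves directly, while Theorem \ref{3.2} is tailor-made to patch the single remaining degree.

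First I would dispose of $i=0$. Since $\Gamma_{I,J}(M)$ is a submodule of the finite $R$-module $M$, it is itself finite, and by construction $\Supp\Gamma_{I,J}(M)\subseteq W(I,J)$. For any finite $R$-module $N$, the modules $\Ext^i_R(R/I,N)$ are all finite, so $\Gamma_{I,J}(M)$ satisfies the definition of $(I,J)$-cofiniteness from the opening paragraph of this section.

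Next I would handle $i\geq 2$. By hypothesis $\cd(I,J,M)\leq 1$, so by the very definition of $\cd(I,J,M)$ recalled at the start of the section, $\lc^{i}_{I,J}(M)=0$ for all $i\geq 2$. The zero module is vacuously $(I,J)$-cofinite.

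Finally, for $i=1$, I would invoke Theorem \ref{3.2}. Its two hypotheses are that $\Ext^{i}_R(R/I,M)$ is finite for all $i\geq 0$ and that $\lc^{i}_{I,J}(M)$ is $(I,J)$-cofinite for every $i\neq 1$. The first holds because $M$ is finite, and the second is exactly what the previous two paragraphs established. Thus Theorem \ref{3.2} (with $t=1$) yields that $\lc^{1}_{I,J}(M)$ is $(I,J)$-cofinite, completing the proof. There is really no serious obstacle here: the only point worth checking carefully is that a finite module with support in $W(I,J)$ qualifies as $(I,J)$-cofinite, which is immediate from finiteness of $\Ext^i_R(R/I,-)$ on the category of finite modules.
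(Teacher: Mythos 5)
Your proof is correct and follows the paper's argument exactly: handle $i=0$ using finiteness of $\Gamma_{I,J}(M)$, handle $i\geq 2$ by vanishing from $\cd(I,J,M)\leq 1$, and close the gap at $i=1$ via Theorem \ref{3.2}. You simply spell out the details that the paper leaves implicit.
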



\begin{thm} \label {3.4}
Let $(R,\fm)$ be a local ring and $M$ be a finite $R$-module with $\dim M=n$.
Then $\lc^{n}_{I,J}(M)$ is artinian and $I$-cofinite. In fact,
$\Ext^i_R\big(R/I,\lc^{n}_{I,J}(M)\big)$ has finite length for all $i$.
\begin{proof}
The assertion follows from \cite[Theorem 2.1]{ChWa}, \cite[Theorem 2.3]{Ch} and
\cite[Theorem 3]{DelMar}.
\end{proof}
\end{thm}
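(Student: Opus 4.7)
The plan is to verify the three conclusions in order: artinianness of $\lc^{n}_{I,J}(M)$, then $I$-cofiniteness, and finally the finite length statement for the Ext modules. First, I would invoke \cite[Theorem 2.1]{ChWa} which asserts that for a finite module $M$ of dimension $n$ over a local ring, the top module $\lc^{n}_{I,J}(M)$ is artinian; this generalizes to pairs of ideals the classical artinianness of $\lc^{n}_{I}(M)$ at the dimension of $M$, and the standard reduction uses that $\Gamma_{I,J}(M/xM)$ drops in dimension for a suitable parameter-type element.

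Next, for $I$-cofiniteness, I would first observe that because $\lc^{n}_{I,J}(M)$ is artinian over a noetherian ring its support is contained in $\Max(R)=\{\fm\}$, and since $I\subseteq\fm$ (the case $I=R$ being trivial) we get $\Supp\big(\lc^{n}_{I,J}(M)\big)\subseteq \V(I)$. Then I would apply Melkersson's criterion for artinian modules: an artinian $R$-module $A$ with $\Supp(A)\subseteq \V(I)$ is $I$-cofinite if and only if $\Hom_R(R/I,A)$ is finitely generated. The finiteness of $\Hom_R\big(R/I,\lc^{n}_{I,J}(M)\big)$ is the substantive content and should come from \cite[Theorem 2.3]{Ch}, presumably via a Matlis duality argument: combining \cite[Theorem 3]{DelMar} with the structure theory for top local cohomology identifies the Matlis dual of $\lc^{n}_{I,J}(M)$ with a finitely generated module built from $M$ and a canonical module, after which $\Hom_R\big(R/I,\lc^{n}_{I,J}(M)\big)$ is dual to a finite module and hence finite.

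Finally, once $(I,J)$-cofiniteness (and in particular $I$-cofiniteness) is in place, $\Ext^{i}_R\big(R/I,\lc^{n}_{I,J}(M)\big)$ is finitely generated for every $i$. On the other hand, since $R/I$ is finite, a resolution of $R/I$ by finitely generated free modules combined with artinianness of $\lc^{n}_{I,J}(M)$ shows that each $\Ext^{i}_R\big(R/I,\lc^{n}_{I,J}(M)\big)$ is an artinian $R$-module as well. A module that is simultaneously noetherian and artinian has finite length, yielding the last assertion.

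The main obstacle is the finiteness of $\Hom_R\big(R/I,\lc^{n}_{I,J}(M)\big)$; once that is secured, everything else is a routine combination of Melkersson's criterion and the standard fact that noetherian plus artinian forces finite length. The structural input from \cite{Ch} and \cite{DelMar} is essential precisely here, since the pair-of-ideals setting makes a direct Čech- or parameter-based computation of this Hom module more delicate than in the classical $J=0$ case.
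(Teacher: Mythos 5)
Your proposal cites exactly the same three references as the paper's (very terse) proof — [ChWa, Theorem~2.1] for artinianness, [Ch, Theorem~2.3] and [DelMar, Theorem~3] for the cofiniteness and finite‑length conclusions — and fills in the glue in a way that is correct: Melkersson's criterion for artinian modules reduces $I$-cofiniteness to finiteness of the Hom, and the noetherian‑plus‑artinian observation gives finite length of each $\Ext^{i}_R(R/I,\lc^{n}_{I,J}(M))$. This is essentially the paper's approach, with one small slip: you write ``once $(I,J)$-cofiniteness (and in particular $I$-cofiniteness)'', but the implication runs the other way --- since $V(I)\subseteq W(I,J)$, $I$-cofinite implies $(I,J)$-cofinite, not conversely --- though this does not affect the argument because both notions are tested against $\Ext^{i}_R(R/I,-)$.
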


\begin{thm} \label {3.5}
Let $(R,\fm)$ be a local ring and $M$ be a finite $R$-module.
If $\dim M\leqslant 2$, then $\lc^{i}_{I,J}(M)$ is $(I,J)$-cofinite
for all $i\geqslant 0$, in particular it is true when $\dim R\leqslant 2$.
\begin{proof}
Apply Theorems \ref{3.4} , {3.2}.
\end{proof}
\end{thm}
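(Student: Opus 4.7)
The plan is to case-split on $\dim M$ and reduce everything to the two cited results together with the Grothendieck-type vanishing $\lc^{i}_{I,J}(M)=0$ for $i>\dim M$, which is known for local cohomology with respect to a pair of ideals (see \cite{TakYoYo}). Throughout, I use the elementary observation that $V(I)\subseteq W(I,J)$, so any $I$-cofinite module is automatically $(I,J)$-cofinite; and that a finite $R$-module whose support lies in $W(I,J)$ is $(I,J)$-cofinite, because the $\Ext$-finiteness is then automatic.

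If $\dim M\leq 1$, the only possibly nonzero local cohomology modules are $\lc^{0}_{I,J}(M)$ and, when $\dim M=1$, also $\lc^{1}_{I,J}(M)$. The first is a submodule of the finite module $M$, so it is finite; since its support lies in $W(I,J)$ by definition of $\Gamma_{I,J}$, it is $(I,J)$-cofinite. For $\lc^{1}_{I,J}(M)$ with $\dim M=1$, Theorem \ref{3.4} (with $n=1$) gives that it is $I$-cofinite, hence $(I,J)$-cofinite.

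Now assume $\dim M=2$. The vanishing result handles $i\geq 3$, and the argument of the previous paragraph takes care of $i=0$. For $i=2$, Theorem \ref{3.4} applied with $n=2$ shows that $\lc^{2}_{I,J}(M)$ is $I$-cofinite, hence $(I,J)$-cofinite. The remaining middle case $i=1$ is where Theorem \ref{3.2} does the work: take $t=1$; the hypothesis that $\Ext^{i}_R(R/I,M)$ be finite for every $i\geq 0$ is automatic because $M$ is finite, and the $(I,J)$-cofiniteness of $\lc^{i}_{I,J}(M)$ for every $i\neq 1$ has just been established. Theorem \ref{3.2} then yields $(I,J)$-cofiniteness of $\lc^{1}_{I,J}(M)$, completing the proof.

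The ``in particular'' clause is immediate since $\dim M\leq\dim R$ for any finite $R$-module $M$. No step is genuinely difficult: all heavy lifting is absorbed by Theorems \ref{3.2} and \ref{3.4}, and the only minor obstacle is recognising that one must bootstrap the outer indices first in order to invoke Theorem \ref{3.2} at the middle index $i=1$.
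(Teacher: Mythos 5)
Your proof is correct and is essentially the paper's intended argument: the paper's one-line proof ``Apply Theorems~\ref{3.4}, \ref{3.2}'' unpacks precisely into using Theorem~\ref{3.4} at the top index $i=\dim M$, the obvious facts at $i=0$ and $i>\dim M$, and then Theorem~\ref{3.2} to fill in the single remaining index when $\dim M=2$. The auxiliary observations you invoke ($V(I)\subseteq W(I,J)$, so $I$-cofinite implies $(I,J)$-cofinite; a finite module supported in $W(I,J)$ is $(I,J)$-cofinite) are exactly the glue the paper leaves implicit.
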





\section{Upper bounds of $\lc^{i}_{I,J}(M)$}


In this section, we introduce the concept of Serre cohomological dimension of
$M$ with respect to a pair of ideals $(I,J)$, but first, we
characterize the membership of $\lc^{i}_{I,J}(M)$ in a Serre class from
upper bound.


\begin{thm}\label{4.1}
Let $n\in\mathbb{N}_0$ and $M$ be a finite $R$--module.
Then the following statements are equivalent:

\begin{enumerate}
  \item[\rm{(}i\rm{)}] $\lc^{i}_{I,J}(M)$ is in $\mathcal S$ for all $i> n$.

  \item[\rm{(}ii\rm{)}] $\lc^{i}_{I,J}(N)$ is in $\mathcal S$ for all $i> n$
  and for any finite $R$-module $N$ such that $\Supp_R(N)\subseteq\Supp_R(M)$.

  \item[\rm{(}iii\rm{)}] $\lc^{i}_{I,J}(R/\fp)$ is in $\mathcal S$ for all
  $\fp\in\Supp_R(M)$ and for all $i> n$.

  \item[\rm{(}iv\rm{)}] $\lc^{i}_{I,J}(R/\fp)$ is in $\mathcal S$ for all
  $\fp\in\Min\Ass_R(M)$ and for all $i> n$.

\end{enumerate}
\end{thm}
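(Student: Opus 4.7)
The plan is to establish the equivalence by proving the substantive implications $(i) \Rightarrow (ii)$, $(iii) \Rightarrow (i)$, and $(iv) \Rightarrow (iii)$, since the remaining links $(ii) \Rightarrow (iii)$ (take $N := R/\mathfrak{p}$ for $\mathfrak{p} \in \Supp_R(M)$, noting $\Supp_R(R/\mathfrak{p}) = \V(\mathfrak{p}) \subseteq \Supp_R(M)$) and $(iii) \Rightarrow (iv)$ (from $\Min\Ass_R(M) \subseteq \Supp_R(M)$) are immediate.

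For $(iii) \Rightarrow (i)$ I would rely on the standard prime filtration of the finite $R$-module $M$: a chain $0 = M_0 \subset M_1 \subset \cdots \subset M_k = M$ with each $M_j/M_{j-1} \cong R/\mathfrak{p}_j$ for some $\mathfrak{p}_j \in \Supp_R(M)$. Inducting on $j$ via the long exact sequences of $\lc^{i}_{I,J}$ attached to $0 \to M_{j-1} \to M_j \to R/\mathfrak{p}_j \to 0$, together with closure of $\mathcal{S}$ under extensions, propagates the hypothesis (iii) on each quotient up to $M = M_k$.

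For $(i) \Rightarrow (ii)$ I would invoke the classical Gruson-type filtration lemma: whenever $N$ is a finite $R$-module with $\Supp_R(N) \subseteq \Supp_R(M)$, there exists a chain $0 = N_0 \subset N_1 \subset \cdots \subset N_t = N$ such that each quotient $N_j/N_{j-1}$ is a homomorphic image of some $M^{k_j}$. Short exact sequences $0 \to K_j \to M^{k_j} \to N_j/N_{j-1} \to 0$ with $\Supp_R(K_j) \subseteq \Supp_R(M)$, together with the long exact sequences in $\lc^{i}_{I,J}$ and a Noetherian induction on the support of the terms appearing, carry the hypothesis $\lc^{i}_{I,J}(M) \in \mathcal{S}$ (for $i > n$) forward to $\lc^{i}_{I,J}(N) \in \mathcal{S}$ (for $i > n$). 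The final implication $(iv) \Rightarrow (iii)$ is then a formal consequence of $(i) \Rightarrow (ii)$: given $\mathfrak{p} \in \Supp_R(M)$, pick $\mathfrak{q} \in \Min\Ass_R(M)$ with $\mathfrak{q} \subseteq \mathfrak{p}$; hypothesis (iv) applied at $\mathfrak{q}$ is precisely statement (i) for the finite module $R/\mathfrak{q}$, and invoking $(i) \Rightarrow (ii)$ for $R/\mathfrak{q}$ (and noting $\Supp_R(R/\mathfrak{p}) = \V(\mathfrak{p}) \subseteq \V(\mathfrak{q}) = \Supp_R(R/\mathfrak{q})$) yields $\lc^{i}_{I,J}(R/\mathfrak{p}) \in \mathcal{S}$ for $i > n$, which is (iii).

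I anticipate the main obstacle to be the step $(i) \Rightarrow (ii)$: bridging from a single module $M$ to the whole class of finite modules supported on $\Supp_R(M)$ forces reliance on Gruson's filtration lemma and careful handling of the kernels $K_j$, which are themselves only known to be supported on $\Supp_R(M)$ and therefore require a Noetherian induction rather than a one-shot propagation. By contrast, once $(i) \Rightarrow (ii)$ is in hand, the remaining steps are either formal or a routine prime-filtration argument.
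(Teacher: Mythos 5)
Your decomposition matches, at the level of strategy, the method of \cite[Theorem 3.1]{AghMel} that the paper merely invokes by reference: the trivial links $(ii)\Rightarrow(iii)\Rightarrow(iv)$, a prime-filtration argument for $(iii)\Rightarrow(i)$ (and equally well $(iii)\Rightarrow(ii)$), Gruson's filtration lemma for $(i)\Rightarrow(ii)$, and $(iv)\Rightarrow(iii)$ obtained by applying the already-established $(i)\Rightarrow(ii)$ to $R/\fq$ with $\fq\in\Min\Ass_R(M)$, $\fq\subseteq\fp$. The cycle closes and these moves are all correct.

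The genuine gap is in the inner mechanism of $(i)\Rightarrow(ii)$. You propose to handle the kernels $K_j$ in $0\to K_j\to M^{k_j}\to N_j/N_{j-1}\to 0$ by ``Noetherian induction on the support of the terms appearing.'' That cannot work: $K_j$ is a submodule of $M^{k_j}$, so $\Supp_R(K_j)\subseteq\Supp_R(M)$, but equality is entirely possible, and then a Noetherian induction on support simply fails to descend. The correct auxiliary parameter is the cohomological degree, run by \emph{descending} induction and anchored by the Grothendieck-type vanishing $\lc^{i}_{I,J}(L)=0$ for any finite $L$ with $i>\dim L$, so that the claim is vacuous for $i>\dim M$. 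Assuming the claim for all indices $>i$, the exact sequence
\[
\lc^{i}_{I,J}(M)^{k_j}\longrightarrow \lc^{i}_{I,J}(N_j/N_{j-1})\longrightarrow \lc^{i+1}_{I,J}(K_j)
\]
places $\lc^{i}_{I,J}(N_j/N_{j-1})$ between a term in $\mathcal S$ (by hypothesis (i)) and a term in $\mathcal S$ (by the inductive hypothesis at degree $i+1$, applicable since $\Supp_R(K_j)\subseteq\Supp_R(M)$), so closure of $\mathcal S$ under subquotients and extensions gives $\lc^{i}_{I,J}(N_j/N_{j-1})\in\mathcal S$; stacking the Gruson filtration via extension-closure then yields $\lc^{i}_{I,J}(N)\in\mathcal S$. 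With Noetherian induction on support replaced by descending induction on the cohomological index, your proposal is sound and agrees with the cited method.
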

\begin{proof}
Apply the method of the proof of Theorem 3.1 in \cite{AghMel}
to $\lc^{i}_{I,J}$ .
\end{proof}

\begin{cor}\label{4.2}
Let $M$ , $N$ be finite $R$-modules such that $~\Supp(N)\subseteq\Supp(M)$ and
$n\in\mathbb{N}_0$. If $\lc^{i}_{I,J}(M)\in\mathcal S$ for all $i> n$, then
$\lc^{i}_{I,J}(N)\in\mathcal S$ for all $i> n$.
\end{cor}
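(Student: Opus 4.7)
The statement is a direct corollary of Theorem \ref{4.1}, and the plan is essentially to extract the implication (i)~$\Rightarrow$~(ii) from that theorem. More precisely, I would begin by assuming the hypothesis $\lc^{i}_{I,J}(M)\in\mathcal{S}$ for all $i>n$, which is exactly condition (i) of Theorem \ref{4.1}. Since the four conditions of Theorem \ref{4.1} are equivalent, condition (ii) must also hold: $\lc^{i}_{I,J}(N)\in\mathcal{S}$ for every finite $R$-module $N$ with $\Supp_R(N)\subseteq\Supp_R(M)$ and every $i>n$. Specializing to the particular $N$ in the hypothesis of the corollary gives the desired conclusion.

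Thus the proof is essentially a one-line citation of Theorem \ref{4.1}, and there is no real obstacle beyond ensuring that the finiteness hypothesis on $N$ (required by Theorem \ref{4.1}) is indeed part of the corollary's hypothesis, which it is. If one wished to avoid invoking the full equivalence, one could alternatively reduce to condition (iv): by primary decomposition of $N$ and the long exact sequence of local cohomology, it suffices to handle the case $N=R/\fp$ for $\fp\in\Ass_R(N)\subseteq\Supp_R(N)\subseteq\Supp_R(M)$, and then applying the equivalence (i)~$\Leftrightarrow$~(iv) of Theorem \ref{4.1} to $M$ finishes the argument. But since Theorem \ref{4.1} already packages these reductions, the cleanest route is simply to cite it.
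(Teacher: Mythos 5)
Your proof is correct and matches the paper's intent exactly: Corollary \ref{4.2} is stated without proof precisely because it is the implication (i) $\Rightarrow$ (ii) of Theorem \ref{4.1}, specialized to the given $N$. The one-line citation is the right and complete argument.
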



\begin{lem}\label{4.3}
For any $R$-module $N$, the following statements are fulfilled.

\begin{enumerate}

  \item[\rm{(}i\rm{)}] $\Gamma_{\fa}(N)\subseteq\Gamma_{\fa,J}(N)
  \subseteq\Gamma_{I,J}(N)$ for any $\fa\in\tilde{W}(I,J)$.

  \item[\rm{(}ii\rm{)}] $\Gamma_{I,J}(N)=0$ if and only if $\Gamma_{\fa,J}(N)=0$
  for any $\fa\in\tilde{W}(I,J)$.

  \item[\rm{(}iii\rm{)}] $\Gamma_{I,J}(N)=N$ if and only if there exists
  $\fa\in\tilde{W}(I,J)$ such that $\Gamma_{\fa,J}(N)=N$.

  \item[\rm{(}iv\rm{)}] If there exists $\fa\in\tilde{W}(I,J)$ such that
  $\Gamma_{\fa}(N)=N$, then $\Gamma_{I,J}(N)=N$.

  \item[\rm{(}v\rm{)}] $\Gamma_{I,J}(N)=\cup_{\fa\in\tilde{W}(I,J)}\Gamma_{\fa}(N)=
  \cup_{\fa\in\tilde{W}(I,J)}\Gamma_{\fa,J}(N)$

\end{enumerate}

\begin{proof}
All these statements follow easily from the definitions. We will only prove
the statement (v). Since $\Gamma_{\fb}(N)\subseteq\Gamma_{\fb,J}(N)$, for
any ideal $\fb$ of $R$, so by \cite[Theorem 3.2]{TakYoYo} and part(i),
we get\\
$\Gamma_{I,J}(N)\subseteq\cup_{\fa\in\tilde{W}(I,J)}\Gamma_{\fa}(N)\subseteq
\cup_{\fa\in\tilde{W}(I,J)}\Gamma_{\fa,J}(N)\subseteq\Gamma_{I,J}(N).$
\end{proof}
\end{lem}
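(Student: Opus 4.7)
The plan is to reduce all five parts to a single structural observation: for $\fa \in \tilde{W}(I,J)$, the three support sets governing the torsion functors involved are nested, namely $V(\fa) \subseteq W(\fa,J) \subseteq W(I,J)$. The first inclusion is immediate, because $\fa \subseteq \fp$ forces $\fa \subseteq \fp + J$, so $\fa^{1} \subseteq \fp + J$. For the second inclusion, if $\fp \in W(\fa,J)$ then $\fa^{m} \subseteq \fp + J$ for some $m$, and combining with $I^{n} \subseteq \fa + J$ gives $I^{nm} \subseteq (\fa + J)^{m} \subseteq \fa^{m} + J \subseteq \fp + J$, so $\fp \in W(I,J)$. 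Part (i) then follows because, for any $x \in N$, the conditions $x \in \Gamma_{\fa}(N)$, $x \in \Gamma_{\fa,J}(N)$, $x \in \Gamma_{I,J}(N)$ are encoded by $\Supp(Rx) \subseteq V(\fa)$, $\subseteq W(\fa,J)$, $\subseteq W(I,J)$ respectively (for $\Gamma_{\fa}$ via the characterization that $x$ is $\fa$-torsion iff $\Supp(Rx) \subseteq V(\fa)$).

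Parts (ii), (iii), (iv) are now short consequences of (i). For (ii), the forward direction is just the chain $\Gamma_{\fa,J}(N) \subseteq \Gamma_{I,J}(N) = 0$, while the backward direction is obtained by specializing to $\fa = I \in \tilde{W}(I,J)$, for which $\Gamma_{I,J} = \Gamma_{I,J}$. Part (iii) proceeds analogously: for $(\Rightarrow)$ take $\fa = I$, and for $(\Leftarrow)$ the chain $N = \Gamma_{\fa,J}(N) \subseteq \Gamma_{I,J}(N) \subseteq N$ forces equality throughout. Part (iv) is the same argument with $\Gamma_{\fa}(N) \subseteq \Gamma_{I,J}(N)$ in place of $\Gamma_{\fa,J}(N) \subseteq \Gamma_{I,J}(N)$.

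For (v), the essential ingredient is the cited characterization \cite[Theorem 3.2]{TakYoYo}, which identifies $\Gamma_{I,J}(N)$ with $\bigcup_{\fa \in \tilde{W}(I,J)} \Gamma_{\fa}(N)$. Combining this identification with the containments $\Gamma_{\fa}(N) \subseteq \Gamma_{\fa,J}(N) \subseteq \Gamma_{I,J}(N)$ from part (i) and taking the union over $\fa \in \tilde{W}(I,J)$ sandwiches the middle union between two copies of $\Gamma_{I,J}(N)$, yielding the asserted three-way equality. The main (indeed only) obstacle is the appeal to \cite[Theorem 3.2]{TakYoYo} in part (v); the rest of the lemma is bookkeeping with the containments of support sets and therefore should be handled briefly, as the authors do by leaving most of it to the reader.
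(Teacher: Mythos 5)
Your proof is correct and follows the same route as the paper: parts (i)--(iv) are routine from the definitions (which you organize cleanly via the nested supports $V(\fa)\subseteq W(\fa,J)\subseteq W(I,J)$, exactly the check the paper leaves to the reader), and part (v) is obtained by the identical sandwich argument combining \cite[Theorem 3.2]{TakYoYo} with the containments from (i). No discrepancy to report.
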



In \cite{DivEsm}, authors introduced the concept of ZD-modules.
An $R$-module $M$ is said to be a ZD-module (zero-divisor module) if for every
submodule $N$ of $M$, the set of zero divisors of $M/N$ is a union of finitely
many prime ideals in Ass$_R(M/N)$. By \cite[Example 2.2]{DivEsm}, the class of
ZD-modules contains modules with finite support, finitely generated, Laskerian,
weakly Laskerian, linearly compact, Matlis reflexive and minimax $R$-modules.

Now, we are in position to prove two other main results of this paper
(Theorem \ref{4.4} and Theorem \ref{4.7}), which the first one can be
considered as a generalization of \cite[Theorem 3.1]{AsgTo}.

\begin{thm}\label{4.4}
Let $M$ be a ZD-module of finite Krull dimension. Let $n\in\mathbb{N}$ be such that
$\lc^{i}_{I,J}(M)\in\mathcal{S}$ for all $i>n$. Then
$\lc^{i}_{I,J}(M)/\fa^{j}\lc^{i}_{I,J}(M)\in\mathcal{S}$ for any
$\fa\in\tilde{W}(I,J)$, all $i\geq n$, and all $j\geq0$.
\begin{proof}
It is enough to verify the assertion for just $i=n$ and $j=1$.
To do this, we use induction on
$d$:=dim$M$. When $d=0$, the result follows from \cite[Theorem 3.2]{TakYoYo} and
Grothendieck$^,$s Vanishing theorem.\\
Next, we assume that $d>0$ and the claim is true for
all $R$-modules of dimension less than $d$. By \cite[Theorem 1.3 (4)]{TakYoYo}, we have
$\lc^{j}_{I,J}(M)\cong \lc^{j}_{I,J}\big(M/\Gamma_{I,J}(M)\big)$ for all $j>0$.
Also, $M/\Gamma_{I,J}(M)$ has dimension not exceeding $d$, and is an $(I,J)$-torsion-free
$R$-module. Therefore we may assume that $\Gamma_{I,J}(M)=0$ and so, by Lemma \ref{4.3},
$\Gamma_{\fa}(M)=0$. By \cite[Lemma 2.4]{DivEsm}, $x\in \fa\setminus\bigcup_{\fp\in \textmd{Ass}(M)}\fp$.
Now, the $R$-module $M/xM$ is ZD-module of dimension $d-1$.
Considering the exact sequence
$$0\rightarrow M^{{.x}\atop{\longrightarrow}} M\rightarrow M/xM \rightarrow0$$
induces a long exact sequence of local cohomology modules, which shows that
$\lc^{i}_{I,J}(M/xM)\in\mathcal{S}$ for all $i>n$.
Thus by inductive hypothesis $\lc^{n}_{I,J}(M/xM)/\fa\lc^{n}_{I,J}(M/xM)\in\mathcal{S}$.
Now, the exact sequence $$\lc^{n}_{I,J}(M)^{{.x}\atop{\longrightarrow}}
\lc^{n}_{I,J}(M)^{{\alpha}\atop{\longrightarrow}} \lc^{n}_{I,J}(M/xM)^{{\beta}\atop
{\longrightarrow}}\lc^{n+1}_{I,J}(M),$$ induces the following exact sequences
$$\lc^{n}_{I,J}(M)^{{.x}\atop{\longrightarrow}}\lc^{n}_{I,J}(M)\rightarrow
N:=\Image\alpha\rightarrow0,$$
$$0\rightarrow N\rightarrow \lc^{n}_{I,J}(M/xM)\rightarrow K:=\Image\beta\rightarrow0.$$
Therefore, the following two sequences

\begin{enumerate}
\item[]$(\ast)$ $~~$ $~~$ $~\lc^{n}_{I,J}(M)/\fa\lc^{n}_{I,J}(M)^{{.x}\atop
{\longrightarrow}}\lc^{n}_{I,J}(M)/\fa\lc^{n}_{I,J}(M)\rightarrow N/\fa N\rightarrow0,$
\end{enumerate}

\begin{enumerate}
\item[]$(\ast\ast)$ $~~$ $~~$ ${\textmd{Tor}^{R}_{1}(R/\fa,K)}\rightarrow N/\fa N
\rightarrow\lc^{n}_{I,J}(M/xM)/\fa\lc^{n}_{I,J}(M/xM)\rightarrow K/\fa K\rightarrow0$
\end{enumerate}
are both exact. Since $x\in \fa$ and from the exact sequence $(\ast)$, we get
$N/\fa N\cong \lc^{n}_{I,J}(M)/\fa\lc^{n}_{I,J}(M)$.
On the other hand, by \cite[Lemma 2.1]{AsgTo}, we have
Tor$^R_1(R/\fa,K)\in\mathcal{S}$. Therefore $N/\fa N\in\mathcal{S}$, by $(\ast\ast)$,
as required.
\end{proof}
\end{thm}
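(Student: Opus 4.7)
I would first reduce to the case $i=n$ and $j=1$. The reduction in $i$ is immediate from the hypothesis $\lc^{i}_{I,J}(M)\in\mathcal{S}$ for $i>n$, and the reduction in $j$ follows by a standard filtration argument on the powers of $\fa$: each subquotient $\fa^{k}\lc^{n}_{I,J}(M)/\fa^{k+1}\lc^{n}_{I,J}(M)$ is an epimorphic image of $(\fa^{k}/\fa^{k+1})\otimes_R(\lc^{n}_{I,J}(M)/\fa\lc^{n}_{I,J}(M))$, which in turn is dominated by a finite direct sum of copies of $\lc^{n}_{I,J}(M)/\fa\lc^{n}_{I,J}(M)$, so closure of $\mathcal{S}$ under quotients, finite direct sums, and extensions suffices to climb from $j=1$ to arbitrary $j\geq 0$. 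I then proceed by induction on $d:=\dim M$. For the base case $d=0$, Grothendieck--type vanishing in the form of \cite[Theorem 3.2]{TakYoYo} forces $\lc^{i}_{I,J}(M)=0$ for every $i\geq 1$, and since $n\geq 1$ the statement is trivial.

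For the inductive step I would first position $M$ so that a nonzero-divisor is available inside $\fa$. Using the isomorphism $\lc^{j}_{I,J}(M)\cong\lc^{j}_{I,J}(M/\Gamma_{I,J}(M))$ for $j>0$ from \cite[Theorem 1.3]{TakYoYo}, I may assume $\Gamma_{I,J}(M)=0$; Lemma \ref{4.3} then yields $\Gamma_{\fa}(M)=0$. Invoking the ZD-module property via \cite[Lemma 2.4]{DivEsm} produces an element $x\in\fa$ that is a nonzero-divisor on $M$. The quotient $M/xM$ is still a ZD-module and has dimension at most $d-1$, and the long exact sequence attached to $0\to M\xrightarrow{\cdot x}M\to M/xM\to 0$, together with the Serre property, ensures $\lc^{i}_{I,J}(M/xM)\in\mathcal{S}$ for every $i>n$. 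The inductive hypothesis applied to $M/xM$ then gives $\lc^{n}_{I,J}(M/xM)/\fa\lc^{n}_{I,J}(M/xM)\in\mathcal{S}$.

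The remaining and main obstacle is the transfer of this information back to $M$. I would split the segment
$$\lc^{n}_{I,J}(M)\xrightarrow{\cdot x}\lc^{n}_{I,J}(M)\xrightarrow{\alpha}\lc^{n}_{I,J}(M/xM)\xrightarrow{\beta}\lc^{n+1}_{I,J}(M)$$
into two short exact sequences using $N:=\Image\alpha$ and $K:=\Image\beta$; note that $K\in\mathcal{S}$ because $\lc^{n+1}_{I,J}(M)\in\mathcal{S}$. Since $x\in\fa$, reducing the first short exact sequence modulo $\fa$ identifies $N/\fa N$ with $\lc^{n}_{I,J}(M)/\fa\lc^{n}_{I,J}(M)$. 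Applying $R/\fa\otimes_R-$ to the second short exact sequence yields
$$\Tor^{R}_{1}(R/\fa,K)\to N/\fa N\to \lc^{n}_{I,J}(M/xM)/\fa\lc^{n}_{I,J}(M/xM)\to K/\fa K\to 0,$$
and the delicate point is keeping the Tor correction inside $\mathcal{S}$. This is exactly what \cite[Lemma 2.1]{AsgTo} delivers, given $K\in\mathcal{S}$. Once that input is available, a Serre sandwich on the displayed four-term sequence forces $N/\fa N\in\mathcal{S}$, which via the identification above closes the induction.
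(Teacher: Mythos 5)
Your proposal is correct and follows essentially the same route as the paper: induction on $\dim M$, reduction to $(I,J)$-torsion-free $M$, a nonzerodivisor $x\in\fa$ from the ZD-hypothesis, splitting the local cohomology sequence at $\Image\alpha$ and $\Image\beta$, and the Tor-control via \cite[Lemma 2.1]{AsgTo}. You actually supply more detail than the paper does on the reductions to $i=n$ and $j=1$ and on why $K\in\mathcal{S}$, but the core argument is identical.
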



The following result is an immediate consequence of the above theorem.

\begin{cor}\label{4.5}
Let $M$ be a ZD-module of finite Krull dimension. Let $n\in\mathbb{N}$ be such that
$\lc^{i}_{I,J}(M)\in\mathcal{S}$ for all $i>n$. Then $\lc^{n}_{I,J}(M)\in\mathcal{S}$
if and only if there exist $\fa\in\tilde{W}(I,J)$ and $m\in\mathbb{N}_0$ such that
$\fa^m\lc^{n}_{I,J}(M)\in\mathcal{S}$.
\begin{proof}
$(\Rightarrow)$ It is obvious.\\
$(\Leftarrow)$ Apply the short exact sequence $0\rightarrow \fa^m\lc^{n}_{I,J}(M)\rightarrow
\lc^{n}_{I,J}(M)\rightarrow \lc^{n}_{I,J}(M)/\fa^m\lc^{n}_{I,J}(M)\rightarrow0$ and
Theorem \ref{4.3}.
\end{proof}
\end{cor}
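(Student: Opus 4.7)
The plan is to observe that this corollary is a clean packaging of Theorem \ref{4.4} together with the two closure axioms for a Serre subcategory (closure under submodules and closure under extensions), so no new induction or module-theoretic work is required beyond what Theorem \ref{4.4} already provides.

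For the forward implication, I would note that $I \in \tilde{W}(I,J)$ (take $n=1$ in the defining condition $I^n \subseteq \fa + J$), so $\tilde{W}(I,J)$ is nonempty and the existence statement is nontrivial to test. If $\lc^{n}_{I,J}(M) \in \mathcal{S}$, then for any choice of $\fa \in \tilde{W}(I,J)$ and any $m \in \mathbb{N}_{0}$, the module $\fa^{m}\lc^{n}_{I,J}(M)$ is a submodule of $\lc^{n}_{I,J}(M)$, hence lies in $\mathcal{S}$ by closure under submodules. This handles $(\Rightarrow)$ in one line.

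For the reverse implication, I would invoke Theorem \ref{4.4} applied to the given $M$ and the index $i=n$: since $M$ is a ZD-module of finite Krull dimension and $\lc^{i}_{I,J}(M) \in \mathcal{S}$ for every $i>n$, the theorem yields $\lc^{n}_{I,J}(M)/\fa^{j}\lc^{n}_{I,J}(M) \in \mathcal{S}$ for every $\fa \in \tilde{W}(I,J)$ and every $j \geq 0$. Specialising to the $\fa$ and the $m$ furnished by the hypothesis, I then consider the short exact sequence
\[
0 \longrightarrow \fa^{m}\lc^{n}_{I,J}(M) \longrightarrow \lc^{n}_{I,J}(M) \longrightarrow \lc^{n}_{I,J}(M)/\fa^{m}\lc^{n}_{I,J}(M) \longrightarrow 0.
\]
The leftmost term is in $\mathcal{S}$ by hypothesis, the rightmost term is in $\mathcal{S}$ by Theorem \ref{4.4}, and hence $\lc^{n}_{I,J}(M) \in \mathcal{S}$ by closure of Serre classes under extensions.

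There is really no obstacle to overcome here once Theorem \ref{4.4} is in hand; the one thing worth a sentence of care is simply verifying that the hypothesis $\lc^{i}_{I,J}(M) \in \mathcal{S}$ for all $i>n$ transfers cleanly so Theorem \ref{4.4} applies at index $i=n$. The content of the corollary is conceptual rather than technical: it says that membership of $\lc^{n}_{I,J}(M)$ in $\mathcal{S}$ is detected on a single large $\tilde{W}(I,J)$-power, because Theorem \ref{4.4} already provides the complementary quotient for free.
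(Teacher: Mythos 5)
Your proof is correct and follows exactly the paper's route: the short exact sequence with $\fa^{m}\lc^{n}_{I,J}(M)$ as the submodule, Theorem \ref{4.4} supplying membership of the quotient in $\mathcal{S}$, and the Serre-class closure axioms for both directions. (The paper's own proof cites ``Theorem \ref{4.3}'', which is evidently a typo for Theorem \ref{4.4}, since \ref{4.3} is a lemma about torsion submodules; you have correctly identified the intended reference.)
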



Applying Corollary \ref{4.5}, for some familiar Serre classes of modules, we get some
results as follows.

\begin{cor}\label{4.6}
Let $M$ be a ZD-module of finite Krull dimension and let $n\in\mathbb{N}$.
Then the following statements are fulfilled.

\begin{enumerate}
  \item[\rm{(}i\rm{)}] If $\lc^{i}_{I,J}(M)$ is finite for all $i>n$, then
  $\lc^{n}_{I,J}(M)$ is finite if and only if there exist $\fa\in\tilde{W}(I,J)$
  and $m\in\mathbb{N}_0$ such that $\fa^m\lc^{n}_{I,J}(M)$ is finite
  if and only if there exist $\fa\in\tilde{W}(I,J)$
  and $m\in\mathbb{N}_0$ such that
  $\lc^{n}_{I,J}(M)/(0:_{\lc^{n}_{I,J}(M)}\fa^m)$ is finite .

  \item[\rm{(}ii\rm{)}] If $\lc^{i}_{I,J}(M)$ is artinian for all $i>n$, then
  $\lc^{n}_{I,J}(M)$ is artinian if and only if there exist
  $\fa\in\tilde{W}(I,J)$ and $m\in\mathbb{N}_0$
  such that $\fa^m\lc^{n}_{I,J}(M)$ is artinian.

  \item[\rm{(}iii\rm{)}] If $\lc^{i}_{I,J}(M)=0$ for all
  $i>n$, then $\lc^{n}_{I,J}(M)={\fa}^j\lc^{n}_{I,J}(M)$ for any
  $\fa\in\tilde{W}(I,J)$ and all $j\geq0$.
  Thus $\lc^{n}_{I,J}(M)=0$ if and only if there exist $\fa\in\tilde{W}(I,J)$
  and $m\in\mathbb{N}_0$ such that ${\fa}^m\lc^{n}_{I,J}(M)=0$.

\end{enumerate}
In particular for $n=\emph{dim}M$ and $\emph{cd}(I,J,M)$
\begin{proof}
Apply Corollary \ref{4.5} and \cite[Theorem 3.1]{AghMel2}.
\end{proof}

\end{cor}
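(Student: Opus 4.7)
The strategy is to apply Corollary \ref{4.5} three times, each time with a different Serre subcategory $\mathcal{S}$: namely the zero class for (iii), the class of finite $R$-modules for (i), and the class of artinian $R$-modules for (ii). Each of these is a Serre subcategory closed under submodules, quotients, and extensions, so Corollary \ref{4.5} gives directly, in each case, the equivalence between $\lc^{n}_{I,J}(M)$ belonging to the class and the existence of some $\fa\in\tilde{W}(I,J)$ and $m\in\mathbb{N}_0$ such that $\fa^{m}\lc^{n}_{I,J}(M)$ belongs to the class. Since the hypothesis $\lc^{i}_{I,J}(M)\in\mathcal{S}$ for all $i>n$ is by assumption the instance of that class, these applications are immediate.

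For part (iii), in addition to the equivalence above, I would invoke Theorem \ref{4.4} directly with $\mathcal{S}=0$: the conclusion $\lc^{n}_{I,J}(M)/\fa^{j}\lc^{n}_{I,J}(M)=0$ is exactly the statement that $\lc^{n}_{I,J}(M)=\fa^{j}\lc^{n}_{I,J}(M)$ for every $\fa\in\tilde{W}(I,J)$ and every $j\geq 0$. This gives the strengthened conclusion specific to the vanishing case.

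For the second equivalence inside part (i), namely that finiteness of $\fa^{m}\lc^{n}_{I,J}(M)$ is equivalent to finiteness of $\lc^{n}_{I,J}(M)/(0:_{\lc^{n}_{I,J}(M)}\fa^{m})$, I would invoke \cite[Theorem 3.1]{AghMel2}, which is precisely the duality statement between submodules of the form $\fa^{m}X$ and quotients of the form $X/(0:_{X}\fa^{m})$ with respect to finiteness. Chaining this with the first equivalence yields the full three-way equivalence in (i). The moreover clause about $n=\dim M$ and $n=\cd(I,J,M)$ is automatic because $\lc^{i}_{I,J}(M)=0$ for all $i>n$ in either case, so a fortiori $\lc^{i}_{I,J}(M)$ lies in every Serre class for $i>n$.

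The main obstacle is conceptual rather than technical: one must recognize that the needed duality between $\fa^{m}X$ and $X/(0:_{X}\fa^{m})$ in part (i) is not a formal consequence of Corollary \ref{4.5} alone and requires the external input from \cite{AghMel2}. Once this is in hand, the three specializations proceed in parallel with no further difficulty.
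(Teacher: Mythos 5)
Your proposal is correct and matches the paper's own (very terse) proof: Corollary \ref{4.5} specialized to the Serre classes of zero, finite, and artinian modules gives the first equivalence in each of (i)--(iii), and \cite[Theorem 3.1]{AghMel2} supplies the finiteness link between $\fa^m X$ and $X/(0:_X\fa^m)$ needed for the second equivalence in (i). Your additional explicit appeal to Theorem \ref{4.4} with $\mathcal{S}=0$ to obtain the stronger identity $\lc^{n}_{I,J}(M)=\fa^{j}\lc^{n}_{I,J}(M)$ in (iii), and your observation that the ``in particular'' clause follows from vanishing of $\lc^{i}_{I,J}(M)$ for $i>\dim M$ or $i>\cd(I,J,M)$, are both correct and make the argument more complete than the paper's citation-only proof.
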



Although the following theorem is seemed to be similar to Theorem \ref{4.4}, but it is
more useful and general than \ref{4.4} for finite $R$-modules.

\begin{thm}\label{4.7}
Let $M$ be a finite $R$ module and $n\in\mathbb{N}_0$ be such that $\lc^{i}_{I,J}(M)$
belongs to $\mathcal S$ for all $i> n$. If $\fb$ is an ideal of $R$ such that
 $\lc^{n}_{I,J}(M/{\fb}M)$ belongs to $\mathcal S$,
then the module $\lc^{n}_{I,J}(M)/{\fb}\lc^{n}_{I,J}(M)$ belongs to
 $\mathcal S$.
\end{thm}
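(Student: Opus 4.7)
The plan is to prove the theorem without inducting on the number of generators of $\fb$, by exploiting a finite presentation of $\fb M$. Using that $R$ is noetherian, I fix generators $x_1,\dots,x_r$ of $\fb$ and form the two short exact sequences
$$(A)\ \ 0\lo \fb M\lo M\lo M/\fb M\lo 0,\qquad (B)\ \ 0\lo K\lo M^r\stackrel{\phi}{\lo}\fb M\lo 0,$$
where $\phi(m_1,\dots,m_r)=\sum_{i=1}^r x_i m_i$ and $K=\Ker\phi$. Both $\fb M$ and $K$ are finite submodules of $M$ and $M^r$ respectively, so their supports lie in $\Supp M$; Theorem~\ref{4.1} then upgrades the hypothesis $\lc^i_{I,J}(M)\in\mathcal S$ ($i>n$) to $\lc^i_{I,J}(\fb M),\lc^i_{I,J}(K)\in\mathcal S$ for all $i>n$.

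From the long exact sequence of $(A)$ I extract a map $g\colon\lc^n_{I,J}(\fb M)\to\lc^n_{I,J}(M)$ whose cokernel embeds in $\lc^n_{I,J}(M/\fb M)\in\mathcal S$, so $\Coker g\in\mathcal S$. From the long exact sequence of $(B)$, together with the natural isomorphism $\lc^n_{I,J}(M^r)\cong\lc^n_{I,J}(M)^r$, I extract a map $h\colon\lc^n_{I,J}(M)^r\to\lc^n_{I,J}(\fb M)$ whose cokernel embeds in $\lc^{n+1}_{I,J}(K)\in\mathcal S$, so $\Coker h\in\mathcal S$.

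The key step is the identification $\Image(g\circ h)=\fb\lc^n_{I,J}(M)$. By functoriality, $g\circ h$ is induced on cohomology by the composite $M^r\stackrel{\phi}{\to}\fb M\hookrightarrow M$, that is, by the $R$-linear map $(m_i)\mapsto\sum x_i m_i$; hence $g\circ h$ sends $(y_i)\mapsto\sum x_i y_i$, and its image is exactly $\fb\lc^n_{I,J}(M)$. This yields the chain $\fb\lc^n_{I,J}(M)\subseteq g(\lc^n_{I,J}(\fb M))\subseteq\lc^n_{I,J}(M)$ and the short exact sequence
$$0\lo g(\lc^n_{I,J}(\fb M))/\fb\lc^n_{I,J}(M)\lo\lc^n_{I,J}(M)/\fb\lc^n_{I,J}(M)\lo\Coker g\lo 0.$$
Its right term is $\Coker g\in\mathcal S$; by the second isomorphism theorem applied via $g$, its left term is a quotient of $\Coker h\in\mathcal S$. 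Closure of $\mathcal S$ under quotients and extensions then forces $\lc^n_{I,J}(M)/\fb\lc^n_{I,J}(M)\in\mathcal S$. The main obstacle I anticipate is merely the bookkeeping verification that $\Image(g\circ h)=\fb\lc^n_{I,J}(M)$; everything else reduces either to Theorem~\ref{4.1} or to routine handling of long exact sequences.
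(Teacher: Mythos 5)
Your proof is correct and follows the same skeleton as the paper's: both start from the map $f\colon M^r\to M$, $(m_i)\mapsto\sum b_i m_i$, use Theorem~\ref{4.1} together with $\Supp(\Ker f)\subseteq\Supp(M)$ to place $\lc^{n+1}_{I,J}(\Ker f)$ in $\mathcal S$, and both end by identifying $\Coker\lc^n_{I,J}(f)$ with $\lc^n_{I,J}(M)/\fb\lc^n_{I,J}(M)$. Where you diverge is the middle step: the paper invokes \cite[Corollary 3.2]{Mel1} as a black box to pass from $\lc^{n+1}_{I,J}(\Ker f),\lc^{n}_{I,J}(\Coker f)\in\mathcal S$ to $\Coker\lc^n_{I,J}(f)\in\mathcal S$, whereas you reprove that general lemma in situ by factoring $f$ through $\fb M$ into the two short exact sequences $(A)$ and $(B)$ and chasing the resulting long exact sequences. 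The computation $\Image(g\circ h)=\fb\lc^n_{I,J}(M)$ by functoriality and the final extension
$$0\lo g\big(\lc^n_{I,J}(\fb M)\big)/\fb\lc^n_{I,J}(M)\lo\lc^n_{I,J}(M)/\fb\lc^n_{I,J}(M)\lo\Coker g\lo 0$$
are sound, with the left term a quotient of $\Coker h$ as you observe. So the mathematics agrees; your version is more self-contained, the paper's is shorter by outsourcing the diagram chase to the cited lemma.
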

\begin{proof}
Let $\fb=(b_1,\dots,b_r)$ and consider the map
$ f : M^r \to M $, defined by
$f(x_1,\dots,x_r)=\sum_1^r b_ix_i$.
Then $\Image f=\fb M$ and $\Coker f=M/\fb M$.
Since $\h^i_{I,J}(M)$ is in $\mathcal S$ for all $i>n$ and
$\Supp(\Ker f)\subseteq\Supp(M)$, it follows from Theorem \ref{4.1}
that $\h^{n+1}_{I,J}(\Ker f)$ is also in $\mathcal S$.
By hypothesis $\h^n_{I,J}(\Coker f)$ belongs to $\mathcal S$.
Hence by \cite[Corollary 3.2]{Mel1} $\Coker{\h^n_{I,J}(f)}$, which equals to
$\h^n_{I,J}(M)/{{\fb}\h^n_{I,J}(M)}$, is in $\mathcal S$.
\end{proof}


\begin{cor}\label{4.8}
Let $M$ be a finite $R$-module and $n\in\mathbb{N}$ be such that
$\lc^{i}_{I,J}(M)\in\mathcal{S}$ for all $i> n$. Then
$\lc^{n}_{I,J}(M)/{\fa}\lc^{n}_{I,J}(M)\in\mathcal S$ for any
$\fa\in\tilde{W}(I,J)$, in particular for $\fa=I$.
\begin{proof}
Let $\fa\in\tilde{W}(I,J)$. Since $M/{\fa}M$ is $\fa$-torsion $R$-module,
thus the assertion follows from Lemma \ref{4.3} (iv), \cite[Corollary 1.13]{TakYoYo}
and Theorem \ref{4.7}.
\end{proof}
\end{cor}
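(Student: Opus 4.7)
The plan is to reduce the corollary to an immediate application of Theorem \ref{4.7} by verifying that its hypothesis on $\lc^{n}_{I,J}(M/\fa M)$ is satisfied trivially, namely that this module vanishes. The key observation is that when $\fa\in\tilde{W}(I,J)$, the quotient $M/\fa M$ is not just $\fa$-torsion in a generic sense, but actually $(I,J)$-torsion, which kills all its positive local cohomology with respect to $(I,J)$.

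First, I would fix $\fa\in\tilde{W}(I,J)$ and consider the module $N:=M/\fa M$. Since $\fa N=0$, clearly $\Gamma_{\fa}(N)=N$. At this point I would invoke Lemma \ref{4.3}(iv), which tells us that the existence of some $\fa\in\tilde{W}(I,J)$ with $\Gamma_{\fa}(N)=N$ forces $\Gamma_{I,J}(N)=N$. Thus $M/\fa M$ is an $(I,J)$-torsion $R$-module.

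Next, I would apply \cite[Corollary 1.13]{TakYoYo} to the $(I,J)$-torsion module $M/\fa M$: for any $(I,J)$-torsion module, the higher local cohomology modules $\lc^{i}_{I,J}(-)$ vanish for $i\geq 1$. Since by hypothesis $n\in\mathbb{N}$, in particular $n\geq 1$, this yields $\lc^{n}_{I,J}(M/\fa M)=0$, which trivially belongs to the Serre class $\mathcal{S}$.

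Finally, with the hypothesis of Theorem \ref{4.7} verified for $\fb:=\fa$, I would apply that theorem directly to conclude that $\lc^{n}_{I,J}(M)/\fa\lc^{n}_{I,J}(M)\in\mathcal{S}$. The special case $\fa=I$ follows immediately since $I\in\tilde{W}(I,J)$. There is essentially no obstacle here: the proof is a clean concatenation of Lemma \ref{4.3}(iv), a vanishing statement from \cite{TakYoYo}, and Theorem \ref{4.7}; the only subtle point worth stating carefully is that $n\geq 1$ is what allows us to replace $\lc^{n}_{I,J}(M/\fa M)$ by zero.
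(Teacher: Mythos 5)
Your proposal is correct and follows exactly the paper's argument: you show $M/\fa M$ is $(I,J)$-torsion via Lemma \ref{4.3}(iv), invoke \cite[Corollary 1.13]{TakYoYo} to get $\lc^{n}_{I,J}(M/\fa M)=0$ for $n\geq 1$, and then apply Theorem \ref{4.7} with $\fb=\fa$. This is the same chain of references the paper cites, just spelled out step by step.
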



\begin{cor}\label{4.9}
Let $M$ be a finite $R$-module. Let $n\in\mathbb{N}$ be such that
$\lc^{i}_{I,J}(M)\in\mathcal{S}$ for all $i>n$. Then
$\lc^{n}_{I,J}(M)\in\mathcal{S}$ if and only if there exists
$m\in\mathbb{N}_0$ such that $\fa^m\lc^{n}_{I,J}(M)\in\mathcal{S}$.
\end{cor}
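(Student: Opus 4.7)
The plan is to derive both implications from Corollary \ref{4.8} together with a simple closure property of $\tilde{W}(I,J)$. The forward direction is essentially free: if $\lc^{n}_{I,J}(M)\in\mathcal{S}$, then its submodule $\fa^m\lc^{n}_{I,J}(M)$ belongs to $\mathcal{S}$ for every $m\in\mathbb{N}_0$, since Serre classes are closed under taking submodules. So the whole content is in the reverse direction.

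For the reverse direction, suppose $\fa\in\tilde{W}(I,J)$ and $m\in\mathbb{N}_0$ satisfy $\fa^m\lc^{n}_{I,J}(M)\in\mathcal{S}$. The key observation is that $\tilde{W}(I,J)$ is closed under taking powers: if $I^k\subseteq\fa+J$ for some $k\geq0$, then expanding gives
$$I^{km}\subseteq(\fa+J)^m\subseteq\fa^m+J,$$
since every term in the binomial expansion of $(\fa+J)^m$ either lies in $\fa^m$ or contains a factor of $J$. Hence $\fa^m\in\tilde{W}(I,J)$, and Corollary \ref{4.8} applied with $\fa^m$ in place of $\fa$ yields $\lc^{n}_{I,J}(M)/\fa^m\lc^{n}_{I,J}(M)\in\mathcal{S}$.

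To finish, I would feed both pieces into the short exact sequence
$$0\lo\fa^m\lc^{n}_{I,J}(M)\lo\lc^{n}_{I,J}(M)\lo\lc^{n}_{I,J}(M)/\fa^m\lc^{n}_{I,J}(M)\lo0,$$
whose outer terms lie in $\mathcal{S}$; the closure of $\mathcal{S}$ under extensions then forces $\lc^{n}_{I,J}(M)\in\mathcal{S}$, as required. There is no genuine obstacle here—the main (minor) point to check is the power-closure of $\tilde{W}(I,J)$, and all heavy lifting for the finite-module case has already been done in Theorem \ref{4.7} and Corollary \ref{4.8}.
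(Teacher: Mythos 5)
Your argument is correct and matches the intended route: the paper states this corollary without proof, leaving it as an immediate consequence of Corollary~\ref{4.8} via the short exact sequence
$$0\lo\fa^m\lc^{n}_{I,J}(M)\lo\lc^{n}_{I,J}(M)\lo\lc^{n}_{I,J}(M)/\fa^m\lc^{n}_{I,J}(M)\lo0$$
and closure of $\mathcal{S}$ under submodules and extensions, exactly as the parallel Corollary~\ref{4.5} is handled. The one genuine point you had to supply is bridging from the $\fa\lc^{n}_{I,J}(M)$ statement of Corollary~\ref{4.8} to the $\fa^m\lc^{n}_{I,J}(M)$ needed here, and your power-closure observation ($\fa\in\tilde{W}(I,J)\Rightarrow\fa^m\in\tilde{W}(I,J)$, since $(\fa+J)^m\subseteq\fa^m+J$) is a clean way to do it. An equivalent alternative the paper makes available is to invoke Theorem~\ref{4.7} directly with $\fb=\fa^m$: since $M/\fa^m M$ is $\fa$-torsion, it is $(I,J)$-torsion by Lemma~\ref{4.3}(iv) and \cite[Corollary 1.13]{TakYoYo}, so $\lc^{n}_{I,J}(M/\fa^m M)=0\in\mathcal{S}$ for $n\ge1$; this skips the power-closure lemma but uses the same torsion fact in disguise. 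Either way the argument is sound, provided one reads the statement (as you did, by analogy with Corollaries~\ref{4.5} and~\ref{4.10}) as quantifying over $\fa\in\tilde{W}(I,J)$ as well as $m$.
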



\begin{cor}\label{4.10}
Let $M$ be a finite $R$-module. Let $\fa\in\tilde{W}(I,J)$ and
$n\in\mathbb{N}$.
\begin{enumerate}

  \item[\rm{(}i\rm{)}] If $\lc^{i}_{I,J}(M)$ is finite for all $i>n$, then
  $\lc^{n}_{I,J}(M)$ is finite if and only if there exist $\fa\in\tilde{W}(I,J)$
  and $m\in\mathbb{N}_0$ such that $\fa^m\lc^{n}_{I,J}(M)$ is finite
  if and only if there exist $\fa\in\tilde{W}(I,J)$
  and $m\in\mathbb{N}_0$ such that
  $\lc^{n}_{I,J}(M)/(0:_{\lc^{n}_{I,J}(M)}\fa^m)$ is finite

  \item[\rm{(}ii\rm{)}] If $\lc^{i}_{I,J}(M)$ is artinian for
  all $i>n$, then $\lc^{n}_{I,J}(M)$ is artinian if and only if there exists
  $m\in\mathbb{N}_0$ such that ${\fa}^m\lc^{n}_{I,J}(M)$ is artinian.

  \item[\rm{(}iii\rm{)}] If $\lc^{i}_{I,J}(M)=0$ for all
  $i>n$, then $\lc^{n}_{I,J}(M)={\fa}^j\lc^{n}_{I,J}(M)$ for all $j\in\mathbb{N}_0$.
  Thus $\lc^{n}_{I,J}(M)=0$ if and only if there exists $m\in\mathbb{N}_0$ such that
  ${\fa}^m\lc^{n}_{I,J}(M)=0$, in particular for $n=\dim M$ and $\cd(I,J,M)$.

  \item[\rm{(}iv\rm{)}] If $R$ is local and $\lc^{i}_{I,J}(M)$ is finite for all $i>n$,
  then $\lc^{n}_{I,J}(M)={\fa}^j\lc^{n}_{I,J}(M)$ for all $j\geq0$.

\end{enumerate}
\begin{proof}
Apply Corollary \ref{4.9}, \cite[Proposition 1]{LotPay}, and
\cite[Theorem 3.1]{AghMel2}.
\end{proof}
\end{cor}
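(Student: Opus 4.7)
The plan is to derive all four parts uniformly from Corollary \ref{4.9} by specializing the Serre class $\mathcal{S}$, with Corollary \ref{4.8} supplying whatever additional information is needed when the conclusion goes beyond a bare equivalence.

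For parts (i) and (ii), I would take $\mathcal{S}$ to be the Serre class of finite $R$-modules and of artinian $R$-modules, respectively. Corollary \ref{4.9} immediately yields the equivalence ``$\lc^{n}_{I,J}(M)\in\mathcal{S}$ iff $\fa^{m}\lc^{n}_{I,J}(M)\in\mathcal{S}$ for some $m$''. The extra equivalence appearing in (i) involving the submodule $(0:_{\lc^{n}_{I,J}(M)}\fa^{m})$ would then be obtained by invoking \cite[Proposition 1]{LotPay}, which interchanges finiteness of $\fa^{m}N$ with finiteness of $N/(0:_{N}\fa^{m})$ for modules $N$ of the type arising here.

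For part (iii), choose $\mathcal{S}=\{0\}$. Corollary \ref{4.8} then gives $\lc^{n}_{I,J}(M)/\fa\lc^{n}_{I,J}(M)=0$, i.e.\ $\fa\lc^{n}_{I,J}(M)=\lc^{n}_{I,J}(M)$; a trivial induction on $j$ extends this to $\fa^{j}\lc^{n}_{I,J}(M)=\lc^{n}_{I,J}(M)$ for every $j\geq 0$. The closing ``iff'' is then the $\mathcal{S}=\{0\}$ case of Corollary \ref{4.9}, and the particular cases $n=\dim M$ or $n=\cd(I,J,M)$ are immediate because for these values $\lc^{i}_{I,J}(M)$ vanishes for all $i>n$.

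For part (iv), Corollary \ref{4.8} applied with $\mathcal{S}$ equal to the class of finite modules gives that $\lc^{n}_{I,J}(M)/\fa\lc^{n}_{I,J}(M)$ is finitely generated. Under the local hypothesis, this would then be upgraded via \cite[Theorem 3.1]{AghMel2} (a Nakayama-type statement adapted to this setting) to the stronger vanishing $\fa\lc^{n}_{I,J}(M)=\lc^{n}_{I,J}(M)$, from which iteration yields $\fa^{j}\lc^{n}_{I,J}(M)=\lc^{n}_{I,J}(M)$ for all $j\geq 0$. I expect this last step to be the main obstacle: $\lc^{n}_{I,J}(M)$ is not a priori finitely generated, so one must check carefully that the Nakayama-type result of \cite[Theorem 3.1]{AghMel2} is actually applicable to the quotient in this setting rather than merely to the module itself.
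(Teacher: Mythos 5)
Your reading of the paper's one-line proof is correct for parts (i)--(iii): Corollary~\ref{4.9} with $\mathcal{S}$ specialized to the finite, artinian, and zero modules gives the core equivalences, Corollary~\ref{4.8} with $\mathcal{S}=\{0\}$ gives the $\fa$-divisibility in (iii) (after which the closing ``iff'' is immediate without a further appeal to \ref{4.9}), and the supplementary equivalence in (i) involving $(0:_{\lc^{n}_{I,J}(M)}\fa^{m})$ is what \cite[Proposition~1]{LotPay} supplies --- it is in fact elementary, since for a generating set $b_{1},\dots,b_{s}$ of $\fa^{m}$ one has an embedding $N/(0:_{N}\fa^{m})\hookrightarrow(\fa^{m}N)^{s}$, while $\fa^{m}N=\sum b_{i}N$ with each $b_{i}N$ a quotient of $N/(0:_{N}\fa^{m})$, so either module is finite exactly when the other is.

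Part (iv) is where the proposal breaks down, and the gap is not quite the one you flag. Corollary~\ref{4.8} gives that $Q:=\lc^{n}_{I,J}(M)/\fa\lc^{n}_{I,J}(M)$ is finite, but no Nakayama-type lemma can then force $Q=0$: $Q$ satisfies $\fa Q=0$, not $\fa Q=Q$, so Nakayama applied to $Q$ says nothing, and Nakayama applied to $\lc^{n}_{I,J}(M)$ itself would require that module to be finitely generated, which is not among the hypotheses; moreover, a finite module killed by $\fa$ certainly need not vanish ($R/\fa$, for instance). The cited \cite[Theorem~3.1]{AghMel2} is not a generic module lemma but a structural theorem about local cohomology over a local ring (asserting $\fa$-divisibility of $\lc^{n}_{\fa}(M)$ when the higher $\lc^{i}_{\fa}(M)$ are coatomic, hence in particular when they are finite), and it is stated for ordinary local cohomology. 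To obtain (iv) one must either transport that theorem's conclusion, or rerun its inductive proof, in the $\lc_{I,J}$ setting; that transport --- not a Nakayama step --- is the actual content of part (iv), and your write-up does not supply it.
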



The following result is more useful whenever $R$ is a local ring and
$I$ is a proper ideal. (see \cite[Lemma 2.1]{Bro})

\begin{cor}\label{4.11}
Let $M$ be a non-zero ZD-module with $d:=\dim M$ and $~t:=\cd(I,J,M)$.
Let $\fa\in\tilde{W}(I,J)$ be such that ${\fa}^m\subseteq\emph{Jac}(R)$ for some
$m\in\mathbb{N}_0$.

\begin{enumerate}

  \item[\rm{(}i\rm{)}] If $t\geq1$, then ${\fa}^j\lc^{t}_{I,J}(M)$
  is not finite for all $j\geq0$.

  \item[\rm{(}ii\rm{)}] If $d\geq1$, then $\lc^{d}_{I,J}(M)$ is finite
  if and only if $~\lc^{d}_{I,J}(M)=0$.

  \item[\rm{(}iii\rm{)}] If $d\geq2$ and $\lc^{d}_{I,J}(M)$ is finite,
  then $\lc^{d-1}_{I,J}(M)/{\fa}^j\lc^{d-1}_{I,J}(M)$ has finite length
  for all $j\geq0$.

\end{enumerate}
In particular when $R$ is local ring and $I\neq R$.
\begin{proof}
(i), (ii) Apply Corollary \ref{4.6} (iii) and Nakayama$^,$s Lemma.\\
(iii) Apply part (ii) and Theorem \ref{4.4} for the class of finite
length.
\end{proof}
\end{cor}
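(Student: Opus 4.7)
The plan is to deduce the three parts in order, each one feeding into the next, with the main inputs being Corollary~\ref{4.6}(iii), Nakayama's lemma (via the hypothesis $\fa^m\subseteq\operatorname{Jac}(R)$), and Theorem~\ref{4.4}.

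For part~(i), suppose for contradiction that $\fa^j\lc^{t}_{I,J}(M)$ is finitely generated for some $j\geq 0$. Since $t=\cd(I,J,M)$, we have $\lc^{i}_{I,J}(M)=0$ for all $i>t$, so Corollary~\ref{4.6}(iii) applies with $n=t$ and yields
\[
\lc^{t}_{I,J}(M) = \fa^{k}\lc^{t}_{I,J}(M)\qquad\text{for every }k\geq 0.
\]
Put $N:=\fa^{j}\lc^{t}_{I,J}(M)$. Taking $k=j$ gives $\lc^{t}_{I,J}(M)=N$, and taking $k=j+m$ together with $\fa^{j+m}\lc^{t}_{I,J}(M)=\fa^{m}N$ gives $N=\fa^{m}N$. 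Since $N$ is finite and $\fa^{m}\subseteq\operatorname{Jac}(R)$, Nakayama's lemma forces $N=0$, hence $\lc^{t}_{I,J}(M)=0$, contradicting the definition of $t$.

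For part~(ii), one direction is trivial. For the other, assume $d\geq 1$ and $\lc^{d}_{I,J}(M)$ is finite. Since $\dim M=d$, Grothendieck vanishing gives $\lc^{i}_{I,J}(M)=0$ for $i>d$, so Corollary~\ref{4.6}(iii) with $n=d$ produces
\[
\lc^{d}_{I,J}(M)=\fa^{m}\lc^{d}_{I,J}(M).
\]
Applying Nakayama's lemma (using $\fa^{m}\subseteq\operatorname{Jac}(R)$ and the assumed finiteness) now forces $\lc^{d}_{I,J}(M)=0$, as desired.

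For part~(iii), assume $d\geq 2$ and $\lc^{d}_{I,J}(M)$ is finite. By part~(ii), $\lc^{d}_{I,J}(M)=0$, and combined with Grothendieck vanishing we conclude $\lc^{i}_{I,J}(M)=0$ for all $i>d-1$. In particular every $\lc^{i}_{I,J}(M)$ with $i>d-1$ lies in the Serre class $\mathcal{S}$ of finite length modules. Theorem~\ref{4.4} applied with $n=d-1$ and this choice of $\mathcal{S}$ then gives exactly that $\lc^{d-1}_{I,J}(M)/\fa^{j}\lc^{d-1}_{I,J}(M)$ has finite length for every $j\geq 0$.

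The only real obstacle is the Nakayama step used in parts~(i) and~(ii): one must be careful that the module to which Nakayama is applied is \emph{already known to be finite}, which is why in~(i) one works with $N=\fa^{j}\lc^{t}_{I,J}(M)$ rather than with $\lc^{t}_{I,J}(M)$ itself, exploiting the equality $N=\fa^{m}N$ rather than trying to conclude directly from $\lc^{t}_{I,J}(M)=\fa^{m}\lc^{t}_{I,J}(M)$. Once this observation is in place, the rest is a mechanical chaining of the previously established results.
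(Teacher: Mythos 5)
Your proof is correct and follows exactly the route the paper indicates: parts (i) and (ii) by applying Corollary \ref{4.6}(iii) with $n=t$ and $n=d$ respectively (both at least $1$, so the hypothesis $n\in\mathbb{N}$ is met, and Grothendieck vanishing supplies the needed $\lc^{i}_{I,J}(M)=0$ for $i>n$) followed by Nakayama's lemma, and part (iii) by feeding the vanishing from (ii) into Theorem \ref{4.4} with $\mathcal{S}$ the Serre class of finite length modules. Your remark that Nakayama must be applied to a module already known to be finite, which is exactly what the finiteness hypothesis in (i) and (ii) provides, is precisely the point that makes the argument go through.
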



\begin{cor}\label{4.12}
Let $M$ be a non-zero finite $R$-module and set
$t:=\emph{sup}\big\{i\geq1\mid\lc^{i}_{I,J}(M)
\emph{~is not finite}\big\}$,
$n:=\emph{cd}(I,J,M)$, and $r:=\emph{dim}M/JM$.

\begin{enumerate}

  \item[\rm{(}i\rm{)}] If $\fa\in\tilde{W}(I,J)$ is such that
  ${\fa}^m\subseteq\emph{Jac}(R)$ for some $m\in\mathbb{N}_0$, then $n\geq1$
  if and only if $n=t$.

  \item[\rm{(}ii\rm{)}] Let $(R,\fm)$ be a local ring and $r\geq1$
  be an integer such that $\lc^{r}_{I,J}(M)$ is finite. Let
  $\fa\in\tilde{W}(I,J)$ be such that ${\fa}^m+J\subseteq\fm$ for some
  $m\in\mathbb{N}_0$. Then $~\lc^{r}_{I,J}(M)=0$.

  \item[\rm{(}iii\rm{)}] If $(R,\fm)$ is a local ring and $I+J$ is an
  $\fm$-primary ideal, then $r=n=t$.

\end{enumerate}

\begin{proof}
Apply Corollaries \ref{4.9}, \ref{4.10}, and \cite[Theorems 4.3,4.5]{TakYoYo}.
\end{proof}
\end{cor}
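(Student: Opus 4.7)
The plan is to treat the three parts in turn, using Corollaries \ref{4.9} and \ref{4.10} as the main machinery together with the two standard bounds of Takahashi--Yoshino--Yoshizawa: the inequality $\cd(I,J,M)\le\dim(M/JM)$ in general \cite[Theorem 4.3]{TakYoYo}, and the equality $\cd(I,J,M)=\dim(M/JM)$ when $(R,\fm)$ is local with $I+J$ an $\fm$-primary ideal \cite[Theorem 4.5]{TakYoYo}.

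For (i), the inequality $t\le n$ is automatic, since $\lc^i_{I,J}(M)=0$ is finite for every $i>n$. For the reverse, assume $n\ge 1$ and argue by contradiction: if $t<n$, then $\lc^n_{I,J}(M)$ is finite while $\lc^i_{I,J}(M)=0$ for all $i>n$. Corollary \ref{4.10}(iii) then gives $\lc^n_{I,J}(M)=\fa^m\lc^n_{I,J}(M)$, and since $\fa^m\subseteq\emph{Jac}(R)$, Nakayama's lemma forces the finite module $\lc^n_{I,J}(M)$ to vanish, contradicting $n=\cd(I,J,M)\ge 1$. The converse $n=t\Rightarrow n\ge 1$ is immediate from the definition of $t$ as a supremum over indices $i\ge 1$.

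For (ii), combine $\cd(I,J,M)\le r$ from \cite[Theorem 4.3]{TakYoYo} with Corollary \ref{4.10}(iii): the former gives $\lc^i_{I,J}(M)=0$ for every $i>r$, so the latter yields $\lc^r_{I,J}(M)=\fa^m\lc^r_{I,J}(M)\subseteq\fm\cdot\lc^r_{I,J}(M)$, and Nakayama applied to the finite module $\lc^r_{I,J}(M)$ over the local ring $(R,\fm)$ finishes the argument.

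Part (iii) is then a short synthesis. Under the $\fm$-primary hypothesis, \cite[Theorem 4.5]{TakYoYo} gives $n=r$; since $I+J\subseteq\fm$, the choice $\fa:=I\in\tilde{W}(I,J)$ satisfies $I^m+J\subseteq\fm$ for every $m$, so (ii) applies. If $\lc^r_{I,J}(M)$ were finite, (ii) would force it to vanish, contradicting $n=r\ge 1$; hence $\lc^r_{I,J}(M)$ is not finite, while $\lc^i_{I,J}(M)=0$ for $i>r$, giving $t=r=n$. The step that most requires care is the consistent choice of $\fa\in\tilde{W}(I,J)$ across (i)--(iii) so that both the ambient membership condition used in Corollary \ref{4.10} and the Jacobson-radical (respectively maximal-ideal) containment needed for Nakayama are simultaneously available; the $\fm$-primary hypothesis in (iii) is precisely what makes $\fa=I$ a legal choice, which is why (ii) and \cite[Theorem 4.5]{TakYoYo} fit together cleanly.
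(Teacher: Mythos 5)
Your proof is correct and follows the same route the paper intends: Corollary~\ref{4.10}(iii) together with Nakayama's lemma, combined with the Takahashi--Yoshino--Yoshizawa bounds $\cd(I,J,M)\le\dim M/JM$ in general and $\cd(I,J,M)=\dim M/JM$ when $I+J$ is $\fm$-primary. The only point worth flagging is that part (iii) tacitly needs $r\ge 1$ in order for part (ii) to apply and for the supremum defining $t$ to be meaningful, a hypothesis that is likewise left implicit in the paper's one-line proof.
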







\begin{cor}\label{4.13}
Let $M$ be a finite $R$-module and $n\in\mathbb{N}$.
If $\lc^{i}_{I,J}(M)$ is artinian for all $i> n$. Then
$\lc^{n}_{I,J}(M)/{\fa}\lc^{n}_{I,J}(M)$ is artinian for any
$\fa\in\tilde{W}(I,J)$.
\end{cor}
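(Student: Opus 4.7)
The plan is to obtain this as an immediate specialization of Corollary \ref{4.8}, which has already been proved for an arbitrary Serre subcategory $\mathcal{S}$. So the main task is to verify that the class in question is indeed a Serre class and to check that the hypothesis of Corollary \ref{4.8} is met.

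First, I would recall that the class $\mathcal{A}$ of artinian $R$-modules is a Serre subcategory of the category of $R$-modules: any submodule of an artinian module is artinian, any quotient of an artinian module is artinian, and given a short exact sequence $0\to A'\to A\to A''\to 0$ with $A'$ and $A''$ artinian, the middle term $A$ is artinian as well. This is classical and requires no further argument.

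Next, taking $\mathcal{S}=\mathcal{A}$, the hypothesis that $\lc^{i}_{I,J}(M)$ is artinian for all $i>n$ becomes precisely the hypothesis $\lc^{i}_{I,J}(M)\in\mathcal{S}$ for all $i>n$ appearing in Corollary \ref{4.8}. Since $M$ is finite by assumption, Corollary \ref{4.8} applies verbatim and yields $\lc^{n}_{I,J}(M)/\fa\lc^{n}_{I,J}(M)\in\mathcal{S}$ for every $\fa\in\tilde{W}(I,J)$, i.e., this quotient is artinian. This completes the proof.

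There is no real obstacle here; the statement is essentially the $\mathcal{S}=\mathcal{A}$ instance of Corollary \ref{4.8}, and the only point to check (that artinian modules form a Serre class) is standard. The genuine work was already done in Theorem \ref{4.7}, whose proof invoked Theorem \ref{4.1} and \cite[Corollary 3.2]{Mel1} to control both $\Ker f$ and $\Coker f$ for the map $f:M^r\to M$ encoding multiplication by a generating set of $\fa$.
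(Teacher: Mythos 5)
Your proof is correct and follows essentially the same route as the paper's: the paper applies Theorem \ref{4.7} directly after observing $\lc^n_{I,J}(M/\fa M)=0$ for $n\ge 1$, which is precisely the content of Corollary \ref{4.8}'s proof, so invoking Corollary \ref{4.8} with $\mathcal{S}$ the (Serre) class of artinian modules is just a slightly more modular packaging of the same argument.
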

\begin{proof}
Note that $\lc^{n}_{I,J}(M/{\fa}M)= 0$ for any $\fa\in\tilde{W}(I,J)$
and all $n\ge 1$. Now, apply Theorem \ref{4.7} for the class of artinian modules.
\end{proof}


\begin{rem}\label{4.14}
\emph{In \ref{4.13} we have to assume that $n\ge 1$.
Take an ideal $I$ in a ring $R$ such that $R/I$ is not artinian. Let $J=0$ and
 $M=R/I$. Then $\lc^{i}_{I,J}(M)= 0$ for $i\ge 1$, and $\G_{I}(M)= M$.
On the other hand $M/{I}M\cong M$. Thus
  $\lc^{0}_{I,J}(M)/{I}\lc^{0}_{I,J}(M)$ is not artinian}.
\end{rem}


Properties of Serre classes of modules and the previous results motivate us to
introduce the following definition as a generalization of the concept of
cohomological dimension. (see \cite{ChWa}).

\begin{defn}\label{4.15}
\emph{Let $I$ , $J$ be two ideals of $R$ and let $M$ be an $R$-module.
For a Serre subcategory $\mathcal{S}$ of the category of $R$-modules, we define
Serre cohomological dimension of $M$ with respect to $(I,J)$, by
$${\textmd{cd}}_{\mathcal{S}}(I,J,M)={\textmd{sup}}\big\{{i\in\mathbb{N}_0}\mid
\lc^{i}_{I,J}(M)\not\in\mathcal{S}\big\},$$
if this supremum exists, and $\infty$ otherwise.\\
It is easy to see that
${\textmd{cd}}_{\mathcal{S}}(I,J,M)={\textmd{inf}}\big\{{n\in\mathbb{N}_0}\mid
\lc^{i}_{I,J}(M)\in\mathcal{S} ~\textmd{~for ~all} ~~i>n\big\}$}.
\end{defn}


\begin{rem} \label{4.16}
\emph{For an arbitrary Serre class $\mathcal{S}$ , we have
${\textmd{cd}}_{\mathcal{S}}(I,J,M)\leq{\emph{cd}}(I,J,M)$, and if
in Definition 4.16, we let $\mathcal{S}:=0$ then we have
$${\textmd{cd}}_{\mathcal{S}}(I,J,M)={\textmd{sup}}\big\{{i\in\mathbb{N}_0}\mid
\lc^{i}_{I,J}(M)\neq0\big\}=\textmd{cd}(I,J,M).$$
Also, if $\mathcal{S}$ is the class of Artinian $R$-modules, we get
$${\textmd{cd}}_{\mathcal{S}}(I,J,M)={\textmd{sup}}\big\{{i\in\mathbb{N}_0}\mid
\lc^{i}_{I,J}(M)~ {\textmd{is not Artinian $R$-module}}\big\}.$$
We denote it by $\q(I,J,M)$}.
\end{rem}


\begin{prop} \label{4.17}
Let $M$ be a ZD-module of finite krull dimension or finite $R$-module. Let
$\mathcal{S}$ be a Serre class and $n:=\emph{cd}_{\mathcal{S}}(I,J,M)\geq1$.
Then ${\fa}^j\lc^{n}_{I,J}(M)\not\in\mathcal{S}$ for any
$\fa\in\tilde{W}(I,J)$ and all $j\geq0$,
in particular ${\fa}^j\lc^{n}_{I,J}(M)\neq0$
for any $\fa\in\tilde{W}(I,J)$ and all $j\geq0$.
\begin{proof}
Apply Corollaries \ref{4.5} , \ref{4.9}.
\end{proof}
\end{prop}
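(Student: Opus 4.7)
The plan is to argue by contradiction using the equivalence between membership of $\lc^{n}_{I,J}(M)$ in $\mathcal{S}$ and membership of an ideal-power multiple of it, supplied by Corollaries \ref{4.5} and \ref{4.9}. The definition of $n:=\textmd{cd}_{\mathcal{S}}(I,J,M)$ gives us immediately two facts which drive the entire argument: on the one hand $\lc^{n}_{I,J}(M)\notin\mathcal{S}$ (otherwise the infimum description in Definition \ref{4.15} would force $\textmd{cd}_{\mathcal{S}}(I,J,M)\leq n-1$), and on the other hand $\lc^{i}_{I,J}(M)\in\mathcal{S}$ for all $i>n$.

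With this set-up I would argue as follows. Suppose, for contradiction, that there exist $\fa\in\tilde{W}(I,J)$ and $j\geq 0$ with $\fa^{j}\lc^{n}_{I,J}(M)\in\mathcal{S}$. Since we already have $\lc^{i}_{I,J}(M)\in\mathcal{S}$ for all $i>n$ and $n\geq 1$, the hypotheses of Corollary \ref{4.5} (when $M$ is a ZD-module of finite Krull dimension) or Corollary \ref{4.9} (when $M$ is finite) are met. Hence one concludes from the backward implication of that corollary that $\lc^{n}_{I,J}(M)\in\mathcal{S}$, which contradicts $n=\textmd{cd}_{\mathcal{S}}(I,J,M)$. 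This establishes the first, and main, assertion.

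The ``in particular'' statement is then an immediate corollary: every Serre class contains the zero module, so if $\fa^{j}\lc^{n}_{I,J}(M)=0$ for some $\fa\in\tilde{W}(I,J)$ and some $j\geq 0$ then $\fa^{j}\lc^{n}_{I,J}(M)\in\mathcal{S}$, contradicting what we just proved. There is no substantive obstacle here; the content of Proposition \ref{4.17} is really just the contrapositive of Corollaries \ref{4.5} and \ref{4.9} combined with the unpacking of the definition of $\textmd{cd}_{\mathcal{S}}(I,J,M)$, and the only care required is to handle both the ZD case and the finite case by invoking the appropriate corollary.
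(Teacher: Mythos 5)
Your argument is correct and is exactly the unpacking of the paper's one‑line proof, which simply cites Corollaries \ref{4.5} and \ref{4.9}: you take the backward implication of those equivalences, combine it with the observation that $n=\mathrm{cd}_{\mathcal{S}}(I,J,M)$ forces $\lc^{n}_{I,J}(M)\notin\mathcal{S}$ while $\lc^{i}_{I,J}(M)\in\mathcal{S}$ for all $i>n$, and derive a contradiction. The ``in particular'' clause follows, as you say, because $0\in\mathcal{S}$ for any Serre class.
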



It is well known that if $\fa$ is an ideal of $R$ and $M$ , $N$ are finite $R$-modules
with $\textmd{Supp}(N)\subseteq \textmd{Supp}(M)$, then
${\textmd{cd}}(\fa,N)\leq {\textmd{cd}}(\fa,M)$. The next result is a generalization
of this fact.

\begin{prop} \label{4.18}
Let $M$ , $N$ be finite $R$-modules such that $\emph{Supp}(N)\subseteq \emph{Supp}(M)$.
Then ${\emph{cd}}_{\mathcal{S}}(I,J,N)\leq {\emph{cd}}_{\mathcal{S}}(I,J,M)$.
\end{prop}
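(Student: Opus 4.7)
The plan is to derive Proposition \ref{4.18} as a direct consequence of Theorem \ref{4.1}, essentially by unpacking the definition of $\mathrm{cd}_{\mathcal{S}}(I,J,-)$ and transporting the membership in $\mathcal{S}$ from $M$ to $N$ via the equivalence (i)$\Leftrightarrow$(ii) in that theorem.

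First I would handle the trivial case. If $\mathrm{cd}_{\mathcal{S}}(I,J,M)=\infty$, then there is nothing to prove, so we may assume $n:=\mathrm{cd}_{\mathcal{S}}(I,J,M)\in\mathbb{N}_0$. By the alternative formula given in Definition \ref{4.15}, this means $\lc^{i}_{I,J}(M)\in\mathcal{S}$ for all $i>n$.

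Next, I would invoke Theorem \ref{4.1}. Since $M$ is a finite $R$-module and $\lc^{i}_{I,J}(M)\in\mathcal{S}$ for all $i>n$, the implication (i)$\Rightarrow$(ii) of that theorem yields $\lc^{i}_{I,J}(N')\in\mathcal{S}$ for all $i>n$ and for every finite $R$-module $N'$ with $\mathrm{Supp}_R(N')\subseteq\mathrm{Supp}_R(M)$. Applying this with $N'=N$, which is permitted because $N$ is finite and satisfies $\mathrm{Supp}(N)\subseteq\mathrm{Supp}(M)$ by hypothesis, we obtain $\lc^{i}_{I,J}(N)\in\mathcal{S}$ for all $i>n$.

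Finally, appealing once more to the infimum characterization of $\mathrm{cd}_{\mathcal{S}}(I,J,N)$ in Definition \ref{4.15}, this gives $\mathrm{cd}_{\mathcal{S}}(I,J,N)\leq n=\mathrm{cd}_{\mathcal{S}}(I,J,M)$, as required. There is no genuine obstacle here; the entire content was packaged into Theorem \ref{4.1}, and the only conceptual point is recognizing that the Serre cohomological dimension is precisely the threshold above which all local cohomology modules lie in $\mathcal{S}$, so that support monotonicity follows immediately from the support-monotonicity statement already established for the membership condition itself.
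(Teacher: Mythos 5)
Your argument is correct and is essentially the same as the paper's: the paper's proof simply says ``Apply Corollary \ref{4.2},'' and Corollary \ref{4.2} is precisely the (i)$\Rightarrow$(ii) direction of Theorem \ref{4.1} that you invoke, combined with the same unwinding of Definition \ref{4.15}.
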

\begin{proof}
Apply Corollary \ref{4.2}.
\end{proof}


\begin{cor} \label{4.19}
Let $M$ , $N$ be finite $R$-modules such that $\emph{Supp}(N)\subseteq \emph{Supp}(M)$.
Then  ${\emph{cd}}(I,J,N)\leq {\emph{cd}}(I,J,M)$.
\end{cor}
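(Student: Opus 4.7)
The plan is to deduce Corollary \ref{4.19} as the specialization of Proposition \ref{4.18} to the trivial Serre class $\mathcal{S}=0$. First I would observe that the class consisting of only the zero module is a Serre subcategory of the category of $R$-modules (it is vacuously closed under submodules, quotients, and extensions), so it is a legitimate choice of $\mathcal{S}$ in the definition of ${\textmd{cd}}_{\mathcal{S}}(I,J,-)$.

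Next I would invoke Remark \ref{4.16}, which explicitly records the identity
\[
{\textmd{cd}}_{0}(I,J,L)={\textmd{sup}}\big\{i\in\mathbb{N}_0\mid \lc^{i}_{I,J}(L)\neq 0\big\}={\textmd{cd}}(I,J,L)
\]
for any $R$-module $L$. Applying this identity to both $M$ and $N$ translates the classical cohomological dimensions in the statement of Corollary \ref{4.19} into Serre cohomological dimensions with respect to $\mathcal{S}=0$.

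Finally I would apply Proposition \ref{4.18} (with this choice of $\mathcal{S}$) to the pair $M$, $N$, which is permissible since by hypothesis $\textmd{Supp}(N)\subseteq\textmd{Supp}(M)$ and both modules are finite. This yields ${\textmd{cd}}_{0}(I,J,N)\leq{\textmd{cd}}_{0}(I,J,M)$, and substituting the identification above gives ${\textmd{cd}}(I,J,N)\leq{\textmd{cd}}(I,J,M)$, as required.

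There is no real obstacle here: the corollary is a one-line consequence of Proposition \ref{4.18} once one notices that the ordinary cohomological dimension is the Serre cohomological dimension associated to the zero subcategory. The only point worth flagging is that the chain of reductions back through Proposition \ref{4.18}, Corollary \ref{4.2}, and ultimately Theorem \ref{4.1} does require the modules involved to be finite, which is exactly the hypothesis assumed in the statement.
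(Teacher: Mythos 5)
Your proof is correct and is exactly the argument the paper intends: Corollary \ref{4.19} is stated immediately after Proposition \ref{4.18} with no separate proof because it is the specialization to $\mathcal{S}=0$, using the identification $\textmd{cd}_{0}(I,J,-)=\textmd{cd}(I,J,-)$ recorded in Remark \ref{4.16}. Nothing further is needed.
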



\begin{cor} \label{4.20}
For a finite $R$-module $M$, there exist the following equalities.
\begin{align*}
\emph{cd}_{\mathcal{S}}(I,J,M)&={\emph{max}~}\big\{\emph{cd}_
{\mathcal{S}}(I,J,R/\mathfrak{p})\mid\mathfrak{p}\in\ {\emph{Ass}}(M)\big\}\\
&={\emph{max}~}\big\{\emph{cd}_{\mathcal{S}}(I,J,R/\mathfrak{p})\mid
\mathfrak{p}\in\ {\emph{Min Ass}}(M)\big\}\\ &={\emph{max}~}\big\{\emph{cd}_
{\mathcal{S}}(I,J,R/\mathfrak{p})\mid\mathfrak{p}\in\ {\emph{Supp}}(M)\big\}\\
&={\emph{max}~}\big\{\emph{cd}_{\mathcal{S}}(I,J,R/\mathfrak{p})\mid
\mathfrak{p}\in\ {\emph{Min Supp}}(M)\big\}\\ &={\emph{max}~}\big\{
\emph{cd}_{\mathcal{S}}(I,J,N)\mid N ~\emph{is a finite submodule } M \big\}\\
&={\emph{max}~}\big\{i\geq0\mid H^i_{I,J}(R/\mathfrak{p})\not\in\mathcal{S},
~\emph{for some} ~\mathfrak{p}\in{\emph{Ass}}(M)\big\}\\
&=\emph{min~}\big\{{n\geq0}\mid\lc^{i}_{I,J}(R/\fp)\in\mathcal{S}
~\emph{~for ~all} ~~i>n ~\emph{and  ~all} ~~i>n\big\}.\end{align*}
\end{cor}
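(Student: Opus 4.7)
The plan is to reduce every equality to the equivalence of conditions (i)--(iv) in Theorem \ref{4.1} together with the monotonicity statement in Proposition \ref{4.18}. Write $c:=\mathrm{cd}_{\mathcal{S}}(I,J,M)$. By the definition recorded in Definition \ref{4.15},
\[
c = \inf\{\, n\in\mathbb{N}_0 \mid \lc^{i}_{I,J}(M)\in\mathcal{S} \text{ for all } i>n\,\}.
\]
Theorem \ref{4.1} rewrites the condition ``$\lc^{i}_{I,J}(M)\in\mathcal{S}$ for all $i>n$'' in three other equivalent ways, the most useful being: ``$\lc^{i}_{I,J}(R/\fp)\in\mathcal{S}$ for all $\fp\in\Min\Ass_R(M)$ and all $i>n$'', and ``$\lc^{i}_{I,J}(N)\in\mathcal{S}$ for all finite $N$ with $\Supp(N)\subseteq\Supp(M)$ and all $i>n$''. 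This immediately identifies $c$ with the last quantity in the corollary (the ``min'' formula), and also with $\max\{\mathrm{cd}_{\mathcal{S}}(I,J,R/\fp)\mid \fp\in\Min\Ass(M)\}$.

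Next I would upgrade from $\Min\Ass(M)$ to the larger sets $\Ass(M)$, $\Supp(M)$, and their minimal subsets. For each $\fp\in\Supp(M)$ there exists $\fq\in\Min\Ass(M)=\Min\Supp(M)$ with $\fq\subseteq\fp$, so $\Supp(R/\fp)\subseteq\Supp(R/\fq)$; Proposition \ref{4.18} then gives
\[
\mathrm{cd}_{\mathcal{S}}(I,J,R/\fp)\le \mathrm{cd}_{\mathcal{S}}(I,J,R/\fq).
\]
Consequently the maxima over $\Supp(M)$, $\Ass(M)$, $\Min\Supp(M)$ and $\Min\Ass(M)$ all coincide, and by the previous paragraph each of them equals $c$. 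The identity with $\max\{\mathrm{cd}_{\mathcal{S}}(I,J,N)\mid N \text{ a finite submodule of } M\}$ is the same idea: Proposition \ref{4.18} applied to any such $N$ (whose support lies in that of $M$) gives $\mathrm{cd}_{\mathcal{S}}(I,J,N)\le c$, while taking $N=M$ attains the supremum.

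It remains to match the ``double max'' formula involving pairs $(i,\fp)$ with $\fp\in\Ass(M)$. Unwinding Definition \ref{4.15} for each $R/\fp$ turns $\mathrm{cd}_{\mathcal{S}}(I,J,R/\fp)$ into $\max\{i\ge 0\mid \lc^{i}_{I,J}(R/\fp)\notin\mathcal{S}\}$; taking the max first over $i$ and then over $\fp$ is the same as taking the joint max, which produces exactly the displayed expression. I expect the only subtle point to be transporting the infimum characterization of $\mathrm{cd}_{\mathcal{S}}$ through Theorem \ref{4.1} cleanly, since one must observe that a common upper bound $n$ for all the $\mathrm{cd}_{\mathcal{S}}(I,J,R/\fp)$ with $\fp$ ranging over a given prime set is precisely the condition required by Theorem \ref{4.1}(iv) (respectively (iii), (ii)); everything else is then bookkeeping with suprema and the monotonicity supplied by Proposition \ref{4.18}.
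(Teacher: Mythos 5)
Your proposal is correct and follows essentially the same route as the paper, which simply invokes Theorem \ref{4.1} together with the monotonicity result (Proposition \ref{4.18}); you have merely unwound the definitions and filled in the bookkeeping with suprema that the paper leaves to the reader.
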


\begin{proof}
Apply Corollary \ref{4.1} and Proposition \ref{4.19}.
\end{proof}


\begin{cor} \label{4.21}
Let $M$ , $N$ be finite $R$-modules such that
$\emph{Supp}(N)\subseteq \emph{Supp}(M)$.
Then  $\q(I,J,N)\leq \q(I,J,M)$.
\end{cor}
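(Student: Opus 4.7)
The plan is to observe that $\q(I,J,-)$ is just an instance of $\mathrm{cd}_{\mathcal{S}}(I,J,-)$, so the result will fall out of Proposition \ref{4.18} with essentially no additional work. Specifically, by Remark \ref{4.16}, taking $\mathcal{S}$ to be the class $\mathcal{A}$ of artinian $R$-modules gives
\[
\q(I,J,M)=\mathrm{cd}_{\mathcal{A}}(I,J,M)\qquad\text{and}\qquad \q(I,J,N)=\mathrm{cd}_{\mathcal{A}}(I,J,N).
\]
So I would first note (or recall) that the class of artinian $R$-modules is indeed a Serre subcategory of the category of $R$-modules: artinianness is preserved under submodules, quotients, and extensions. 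This is the only thing one needs to verify in order to make Proposition \ref{4.18} applicable to $\mathcal{A}$.

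Next I would invoke Proposition \ref{4.18} with $\mathcal{S}:=\mathcal{A}$ and with the given finite $R$-modules $M$, $N$ satisfying $\mathrm{Supp}(N)\subseteq\mathrm{Supp}(M)$, to conclude
\[
\mathrm{cd}_{\mathcal{A}}(I,J,N)\le \mathrm{cd}_{\mathcal{A}}(I,J,M).
\]
Translating via the identification above yields $\q(I,J,N)\le \q(I,J,M)$, which is the claim.

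There is no genuine obstacle here; the entire work of the corollary is done by Proposition \ref{4.18} (which in turn rests on Corollary \ref{4.2} and ultimately on Theorem \ref{4.1}). The only point that might deserve an explicit sentence in the proof is that artinian modules do form a Serre class, so that Proposition \ref{4.18} legitimately applies. Accordingly, my written proof would consist of two short sentences: one identifying $\q$ with $\mathrm{cd}_{\mathcal{A}}$ via Remark \ref{4.16} after noting that $\mathcal{A}$ is a Serre class, and one citing Proposition \ref{4.18}.
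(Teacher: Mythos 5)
Your proof is correct and matches the paper's intent exactly: Corollary \ref{4.21} is stated without proof precisely because it is the specialization of Proposition \ref{4.18} to the Serre class of artinian $R$-modules, via the identification $\q(I,J,-)=\mathrm{cd}_{\mathcal{S}}(I,J,-)$ given in Remark \ref{4.16}. Your added remark that artinian modules form a Serre class (closed under submodules, quotients, extensions) is the one hypothesis that needs checking, and it holds, so nothing is missing.
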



\begin{prop} \label{4.22}
Let $0\rightarrow L\rightarrow M\rightarrow N\rightarrow0$
be an exact sequence of finite $R$-modules. Then
$\emph{cd}_{\mathcal{S}}(I,J,M)={\emph{max}}~\big\{\emph{cd}_
{\mathcal{S}}(I,J,L)~,~\emph{cd}_{\mathcal{S}}(I,J,N)\big\}$
\end{prop}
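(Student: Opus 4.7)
The plan is to establish the equality by proving the two inequalities separately, using the long exact sequence of local cohomology in one direction and the earlier Proposition \ref{4.18} in the other.

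For the inequality $\mathrm{cd}_{\mathcal{S}}(I,J,M)\le \max\{\mathrm{cd}_{\mathcal{S}}(I,J,L),\mathrm{cd}_{\mathcal{S}}(I,J,N)\}$, set $n$ to be this maximum (the case $n=\infty$ being trivial). For each $i>n$, the short exact sequence $0\to L\to M\to N\to 0$ induces a piece of the long exact sequence of the derived functors of $\Gamma_{I,J}$, namely
\[
\lc^{i}_{I,J}(L)\lo \lc^{i}_{I,J}(M)\lo \lc^{i}_{I,J}(N),
\]
in which both outer terms lie in $\mathcal{S}$ by the definition of $n$. Since $\mathcal{S}$ is a Serre class, it is closed under taking submodules, quotients, and extensions, so $\lc^{i}_{I,J}(M)\in\mathcal{S}$ (realize the middle term as an extension of a submodule of $\lc^{i}_{I,J}(N)$ by a quotient of $\lc^{i}_{I,J}(L)$). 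This proves $\mathrm{cd}_{\mathcal{S}}(I,J,M)\le n$.

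For the reverse inequality, observe that $L$ embeds in $M$ and $N$ is a quotient of $M$, so both $\Supp(L)\subseteq\Supp(M)$ and $\Supp(N)\subseteq\Supp(M)$. Proposition \ref{4.18} then yields
\[
\mathrm{cd}_{\mathcal{S}}(I,J,L)\le\mathrm{cd}_{\mathcal{S}}(I,J,M)\quad\text{and}\quad\mathrm{cd}_{\mathcal{S}}(I,J,N)\le\mathrm{cd}_{\mathcal{S}}(I,J,M),
\]
hence $\max\{\mathrm{cd}_{\mathcal{S}}(I,J,L),\mathrm{cd}_{\mathcal{S}}(I,J,N)\}\le\mathrm{cd}_{\mathcal{S}}(I,J,M)$. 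Combining the two inequalities gives the desired equality.

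There is no serious obstacle; the argument is essentially formal once one has the long exact sequence of $\lc^{i}_{I,J}$ (which exists because these are the right derived functors of $\Gamma_{I,J}$) and the support-monotonicity Proposition \ref{4.18}. The only mild care needed is the convention for $\infty$, which is handled by noting that both inequalities become vacuous if the corresponding quantity is infinite.
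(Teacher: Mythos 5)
Your proof is correct and follows essentially the same route as the paper's: the reverse inequality comes from the support-monotonicity of $\mathrm{cd}_{\mathcal{S}}$ (Proposition~\ref{4.18}, which the paper invokes via Corollary~\ref{4.2}), and the forward inequality comes from the long exact sequence of $\lc^{i}_{I,J}$ together with closure of $\mathcal{S}$ under subquotients and extensions. The only cosmetic difference is that you argue the forward inequality directly for all $i>n$, whereas the paper phrases it as a contradiction at the single index $t=\mathrm{cd}_{\mathcal{S}}(I,J,M)$; both are fine, and your explicit handling of the $n=\infty$ convention is a small bonus.
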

\begin{proof}
Let $t:=\textmd{cd}_{\mathcal{S}}(I,J,M)$ and $s:={\textmd{max}~}\big\{
\emph{cd}_{\mathcal{S}}(I,J,L)~,~\emph{cd}_{\mathcal{S}}(I,J,N)\big\}$.
By, Corollary \ref{4.2} , we have $t\geq s$. Now, let $t>s$. Then by
the following exact sequence
$$\cdots\rightarrow H^t_{I,J}(L)\rightarrow H^t_{I,J}(M)\rightarrow
H^t_{I,J}(N)\rightarrow\cdots,$$
we get $H^t_{I,J}(M)\in\mathcal{S}$
which is a contradiction with $\textmd{cd}_{\mathcal{S}}(I,J,M)=t$.
\end{proof}


\begin{cor} \label{4.23}
For a noetherian ring $R$ there exists the following equality.
$$\emph{cd}_{\mathcal{S}}(I,J,R)={\emph{sup}~}\big\{\emph{cd}_
{\mathcal{S}}(I,J,N)\mid ~N~ {\textmd{is a finite $R$-module}} \big\}.$$
In particular, for $r\in\mathbb{N}_0$ the following statements are equivalent:
\begin{enumerate}

\item[(i)] $H^j_{I,J}(R)\in\mathcal{S}$ for all $j>r$.

\item[(ii)] $H^j_{I,J}(M)\in\mathcal{S}$ for all $j>r$ and all finite $R$-module $M$.
\end{enumerate}
\end{cor}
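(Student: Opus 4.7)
The plan is to use the support-monotonicity of Serre cohomological dimension (Proposition 4.18) applied to the universal ambient module $R$ itself.

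First I would observe that $R$ is itself a finite $R$-module, so trivially
$$\text{cd}_{\mathcal{S}}(I,J,R) \leq \sup\{\text{cd}_{\mathcal{S}}(I,J,N) \mid N \text{ is a finite } R\text{-module}\}.$$
For the reverse inequality, I would note that for any finite $R$-module $N$ one has $\Supp(N) \subseteq \Spec(R) = \Supp(R)$, and therefore Proposition \ref{4.18} immediately gives $\text{cd}_{\mathcal{S}}(I,J,N) \leq \text{cd}_{\mathcal{S}}(I,J,R)$. Taking the supremum over all finite $N$ yields the other direction, and combining the two gives the desired equality.

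For the "in particular" part, the plan is to translate the displayed equality into a statement about vanishing modulo $\mathcal{S}$ in degrees $>r$. For (i)$\Rightarrow$(ii), assuming $\lc^{j}_{I,J}(R) \in \mathcal{S}$ for all $j>r$ means $\text{cd}_{\mathcal{S}}(I,J,R) \leq r$; by the displayed equality this forces $\text{cd}_{\mathcal{S}}(I,J,N) \leq r$ for every finite $R$-module $N$, which by definition of Serre cohomological dimension (Definition \ref{4.15}) is exactly the statement that $\lc^{j}_{I,J}(N) \in \mathcal{S}$ for all $j>r$. The direction (ii)$\Rightarrow$(i) is immediate by specializing $M = R$.

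No real obstacle appears here: the entire corollary is a direct consequence of Proposition \ref{4.18} (support-monotonicity) once one remarks that every finite $R$-module has support contained in $\Spec(R) = \Supp(R)$. The only minor subtlety is book-keeping the case $\text{cd}_{\mathcal{S}}(I,J,R) = \infty$, but in that case both sides of the equality are $\infty$ (the right-hand side contains $R$), and the equivalence (i)$\Leftrightarrow$(ii) becomes vacuous in the sense that no finite $r$ can witness (i), hence none witnesses (ii) either.
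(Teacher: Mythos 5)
Your proof is correct and takes essentially the approach the paper intends: the paper leaves Corollary 4.23 without an explicit proof, but it follows directly from Proposition 4.18 via the observation $\Supp(N)\subseteq\Spec(R)=\Supp(R)$ for any finite $R$-module $N$, exactly as you argue, with the "in particular" equivalence then being a straightforward unwinding of Definition 4.15.
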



\bibliographystyle{amsplain}

\end{document}